\documentclass[12pt,reqno]{NumPDEsArticle}

\usepackage[utf8]{inputenc}
\usepackage[T1]{fontenc}
\usepackage[english]{babel}
\usepackage{amsmath, amssymb, amsthm}
\usepackage{enumitem}
\usepackage{moreverb}
\usepackage[activate={true,nocompatibility},
    final,
    kerning=true,
    spacing=true,
    factor=1100,
    stretch=10,
    shrink=10]{microtype}
\usepackage{orcidlink}
\usepackage{mathtools}
\usepackage{subfig}
\addbibresource{literature.bib}

\title[On optimal complexity of AFEM with inexact solver]{On full linear convergence and optimal complexity of adaptive FEM with inexact solver}

\keywords{adaptive finite element method, optimal convergence rates, cost-optimality, inexact
solver, full linear convergence}

\subjclass[2020]{41A25, 65N15, 65N30, 65N50, 65Y20}

\author{Philipp Bringmann\orcidlink{0000-0002-4546-5165}}
\author{Michael Feischl\orcidlink{0000-0001-7206-1652}}
\author{Ani Miraçi\orcidlink{0000-0003-4962-9662}}
\author{Dirk Praetorius\orcidlink{0000-0002-1977-9830}}
\author{Julian Streitberger\orcidlink{0000-0003-1189-0611}}

\email{philipp.bringmann@asc.tuwien.ac.at}
\email{michael.feischl@asc.tuwien.ac.at}
\email{ani.miraci@asc.tuwien.ac.at}
\email{dirk.praetorius@asc.tuwien.ac.at (corresponding author)}
\email{julian.streitberger@asc.tuwien.ac.at}

\thanks{This research was funded in whole or in part by the Austrian Science Fund (FWF)
[\href{https://www.fwf.ac.at/en/research-radar/10.55776/F65}{10.55776/F65},
\href{https://www.fwf.ac.at/en/research-radar/10.55776/I6802}{10.55776/I6802}, and 
\href{https://www.fwf.ac.at/en/research-radar/10.55776/P33216}{10.55776/P33216}].
For open access purposes, the author has applied a CC BY public copyright license 
to any author accepted manuscript version arising from this submission. Additionally, the Vienna 
	School of Mathematics supports Julian Streitberger.}

\begin{document}
\maketitle

\begin{abstract}
	The ultimate goal of any numerical scheme for partial differential equations (PDEs) is to compute an approximation of user-prescribed accuracy at quasi-minimal
	computation time. To this end, algorithmically, the standard adaptive finite element method (AFEM) integrates an inexact solver and nested iterations with discerning stopping criteria balancing the different error components.
	The analysis ensuring optimal convergence order of AFEM with respect to the overall computational cost critically hinges on the concept of R-linear convergence of a suitable quasi-error quantity.
	This work tackles several shortcomings of previous approaches by introducing a new proof strategy.
	Previously, the analysis of the algorithm required several parameters to be fine-tuned.
	This work leaves the classical reasoning and introduces a summability criterion for R-linear convergence to remove restrictions on those parameters.
	Second, the usual assumption of a (quasi-)Pythagorean identity is replaced by the generalized notion of quasi-orthogonality from [Feischl, Math.~Comp., 91 (2022)]. Importantly, this paves the way towards extending the analysis
	of AFEM with inexact solver to general inf-sup stable problems beyond the energy minimization setting.
	Numerical experiments investigate the choice of the adaptivity parameters.
\end{abstract}

\section{Introduction}
\label{section:introduction}

Over the past three decades, the mathematical understanding of adaptive finite
element methods (AFEMs) has matured; see, e.g.,~\cite{doerfler1996, mns2000,
	bdd2004, stevenson2007, ckns2008, cn2012, ffp2014} for linear elliptic PDEs,
\cite{veeser2002, dk2008, bdk2012, gmz2012} for certain nonlinear PDEs,
and~\cite{cfpp2014} for an axiomatic framework summarizing the
earlier references. In most of the cited works, the focus is on
\textsl{(plain) convergence} in~\cite{doerfler1996, mns2000, veeser2002, dk2008, gmz2012} and
optimal convergence rates with respect to the number of degrees of
freedom, i.e., \textsl{optimal rates},in~\cite{bdd2004, ckns2008, cn2012, bdk2012, gmz2012, ffp2014}.

The adaptive feedback loop strives to approximate the unknown and
possibly
singular exact PDE solution $u^\star$ on the basis of
\textsl{a~posteriori} error estimators and adaptive
mesh refinement strategies. Employing AFEM with \emph{exact solver}, detailed in
Algorithm~\ref{algorithm:exact} below,
generates a sequence $(\mathcal{T}_\ell)_{\ell \in \mathbb{N}_0}$ of successively refined
meshes together with the corresponding finite element solutions
$u_\ell^\star \approx u^\star$ and error estimators
$\eta_\ell(u_\ell^\star)$
by iterating
\begin{equation}\label{eq:semr:exact}
	\boxed{~~\mathtt{solve}~~}
	\quad \longrightarrow \quad
	\boxed{~~\mathtt{estimate}~~}
	\quad \longrightarrow \quad
	\boxed{~~\mathtt{mark}~~}
	\quad \longrightarrow \quad
	\boxed{~~\mathtt{refine}~~}
\end{equation}
A key argument in the analysis of~\eqref{eq:semr:exact} in~\cite{ckns2008} and succeeding works for symmetric PDEs consists in showing \textsl{linear convergence} of the quasi-error
\begin{equation}\label{eq:intro:ckns}
	\Delta_\ell^\star
	\le q_{\textup{lin}} \, \Delta_{\ell-1}^\star
	\quad
	\text{with}
	\quad
	\Delta_\ell^\star \coloneqq \big[ |\mkern-1.5mu|\mkern-1.5mu| u^\star - u_\ell^\star |\mkern-1.5mu|\mkern-1.5mu|^2 +
		\gamma \, \eta_\ell(u_\ell^\star)^2 \big]^{1/2}
	\quad \text{for all } \ell \in \mathbb{N},
\end{equation}
where $0 < q_{\textup{lin}}, \gamma < 1$ depend only on the problem setting and the marking parameter~$\theta$.
Here, $|\mkern-1.5mu|\mkern-1.5mu| \, \cdot \, |\mkern-1.5mu|\mkern-1.5mu|$ is the PDE-induced energy norm providing a Pythagorean identity of the form
\begin{equation}\label{eq:pythagoras}
	|\mkern-1.5mu|\mkern-1.5mu| u^\star - u_{\ell+1}^\star |\mkern-1.5mu|\mkern-1.5mu|^2
	+ |\mkern-1.5mu|\mkern-1.5mu| u_{\ell+1}^\star - u_\ell^\star |\mkern-1.5mu|\mkern-1.5mu|^2
	=
	|\mkern-1.5mu|\mkern-1.5mu| u^\star - u_\ell^\star |\mkern-1.5mu|\mkern-1.5mu|^2
	\quad \text{for all } \ell \in \mathbb{N}_0.
\end{equation}
The work \cite{cfpp2014} showed that a \textsl{tail-summability} of the estimator sequence
\begin{equation*}
	\sum_{\ell' = \ell + 1}^\infty \eta_{\ell'}(u_{\ell'}^\star)
	\le C_{\textup{lin}}' \eta_\ell(u_\ell^\star)
	\quad \text{for all } \ell \in \mathbb{N}_0
\end{equation*}
or, equivalently, \textsl{R-linear convergence}
\begin{equation}\label{eq:intro:axioms}
	\eta_\ell(u_\ell^\star)
	\le C_{\textup{lin}} q_{\textup{lin}}^{\ell-\ell'} \, \eta_{\ell'}(u_{\ell'}^\star)
	\quad \text{for all } \ell \ge \ell' \ge 0,
\end{equation}
with $0 < q_{\textup{lin}} < 1$ and $C_{\textup{lin}}, C_{\textup{lin}}' > 0$, suffices to prove convergence.
An extension of the analysis to nonsymmetric linear PDEs can be done by relaxing the Pythagorean identity to a quasi-Pythagorean estimate in~\cite{cn2012, ffp2014, bhp2017}. However, this comes at the expense that either the initial mesh has to be sufficiently fine as in~\cite{cn2012}, \eqref{eq:intro:ckns} only holds for \(\ell \ge \ell_0 \in \mathbb{N}_0\) \cite{bhp2017}, or~\eqref{eq:intro:ckns} holds in the general form~\eqref{eq:intro:axioms} below, where the constants depend on the adaptively generated meshes in~\cite{ffp2014}.
Additional to R-linear convergence~\eqref{eq:intro:axioms}, a sufficiently small marking parameter $\theta$ leads to optimal rates in the sense of~\cite{stevenson2007, ckns2008}.
This can be stated in terms of approximation classes from~\cite{bdd2004, stevenson2008, ckns2008} by mathe\-ma\-tically guaranteeing the largest possible convergence rate
$s > 0$ with
\begin{equation}
	\sup_{\ell \in \mathbb{N}} (\#\mathcal{T}_\ell)^s \eta_\ell(u_\ell^\star) < \infty.
\end{equation}

However, due to the incremental nature of adaptivity,
the mathematical question on optimal convergence rates should
rather refer to the overall computational cost (resp.\ the
cumulative computation time). This, coined as \textsl{optimal complexity} in the
context of adaptive wavelet methods from~\cite{MR1803124,MR2035007}, was
later adopted for AFEM in~\cite{stevenson2007,cg2012}.
Therein,
optimal complexity is guaranteed
for AFEM with \textsl{inexact} solver,
provided that the computed iterates $u_\ell^{\underline{k}}$
are sufficiently close to the (unavailable) exact discrete solutions
$u_\ell^\star$.
This theoretical result requires that the algebraic error is
controlled by the discretization error multiplied by
a sufficiently small solver-stopping parameter~$\lambda$.
However, numerical experiments in~\cite{cg2012} indicate
that also moderate choices of the stopping parameter suffice
for optimal complexity.
Hence, the interrelated stopping criterion
led to a combined solve-estimate module in
the adaptive algorithm
\begin{equation}\label{eq:semr:inexact}
	\boxed{~~\mathtt{solve~\&~estimate}~~}
	\quad \longrightarrow \quad
	\boxed{~~\mathtt{mark}~~}
	\quad \longrightarrow \quad
	\boxed{~~\mathtt{refine}~~}
\end{equation}
Driven by the interest in AFEMs for nonlinear problems in~\cite{ev2013,
	cw2017, ghps2018, hw2020a, hw2020b}, recent papers~\cite{ghps2021, hpw2021,
	hpsv2021} aimed to combine linearization and algebraic
iterates into a nested adaptive algorithm. Following the latter, the algorithmic decision for
either mesh refinement or linearization or algebraic solver step is steered by \textsl{a~posteriori}-based stopping criteria with suitable stopping parameters. This allows to balance the error components and compute the inexact approximations $u_\ell^k \approx
	u_\ell^\star$ given by a contractive solver with iteration counter $k = 1,
	\dots, \underline{k}[\ell]$ on the mesh
$\mathcal{T}_\ell$, and $|\ell,k| \in \mathbb{N}_0$ denotes the lexicographic order of the
sequential loop~\eqref{eq:semr:inexact}; see
Algorithm~\ref{algorithm:single} below.

Due to an energy identity (coinciding with~\eqref{eq:pythagoras} for symmetric linear PDEs), the works \cite{ghps2021,hpw2021} prove full R-linear convergence for the quasi-error
$\Delta_\ell^k \coloneqq \big[
		|\mkern-1.5mu|\mkern-1.5mu| u^\star - u_\ell^k |\mkern-1.5mu|\mkern-1.5mu|^2 + \gamma \, \eta_\ell(u_\ell^k)^2 \big]^{1/2}$
with respect to the lexicographic ordering \(|\cdot, \cdot|\), i.e.,
\begin{equation}\label{eq:intro:full_linear_single}
	\Delta_\ell^k
	\le
	C_{\textup{lin}} q_{\textup{lin}}^{|\ell,k| - |\ell',k'|} \, \Delta_{\ell'}^{k'}
	\quad \text{for all } (\ell',k'),(\ell,k) \in \mathcal{Q}
	\text{ with } |\ell',k'| \le |\ell,k|,
\end{equation}
which is guaranteed for arbitrary marking parameter $\theta$ and stopping parameter $\lambda$ (with constants $C_{\textup{lin}} > 0 $ and $0 < q_{\textup{lin}} < 1$ depending on $\theta$ and $\lambda$).
Moreover,~\cite{ghps2021} proves that full R-linear convergence is also the key argument for optimal complexity in the sense that it ensures, for all $s > 0$,
\begin{equation}\label{eq:intro:complexity}
	\vspace{-0.3cm}
	M(s)
	\coloneqq
	\sup_{(\ell,k) \in \mathcal{Q}} (\#\mathcal{T}_\ell)^s \Delta_\ell^k
	\le \sup_{(\ell,k) \in \mathcal{Q}} \Big( \sum_{\substack{(\ell',k') \in \mathcal{Q} \\
			|\ell',k'| \le |\ell,k|}} \#\mathcal{T}_{\ell'} \Big)^{s} \Delta_\ell^k \le C_{\textup{cost}}(s) \,
	M(s),
\end{equation}
where $C_{\textup{cost}}(s) > 1$ depends only on $s$, $C_{\textup{lin}}$, and $q_{\textup{lin}}$.
Since all modules of AFEM with inexact solver as displayed in~\eqref{eq:semr:inexact} can be implemented at linear cost
$\mathcal{O}(\#\mathcal{T}_\ell)$, the equivalence~\eqref{eq:intro:complexity} means that the quasi-error $\Delta_\ell^k$
decays with rate $s$ over the number of elements $\#\mathcal{T}_\ell$ if and only if it
decays with rate $s$ over the related \textsl{overall} computational work (and hence total computation time).

In essence, optimal complexity of AFEM with inexact solver thus follows from a perturbation argument (by taking the stopping parameter $\lambda$ sufficiently small) as soon as full linear convergence~\eqref{eq:intro:full_linear_single} of AFEM with inexact solver and optimal rates of AFEM with exact solver (for sufficiently small $\theta$) have been established; see,
e.g.,~\cite{cfpp2014,ghps2021}.

In this paper, we present a novel proof of full linear
convergence~\eqref{eq:intro:full_linear_single} with contractive
solver that, unlike~\cite{ghps2021, hpw2021}, avoids the Pythagorean
identity~\eqref{eq:pythagoras}, but relies only on the quasi-orthogonality
from~\cite{cfpp2014} (even in its generalized form from~\cite{feischl2022}).
The latter is known to be sufficient and necessary for linear
convergence~\eqref{eq:intro:axioms} in the presence of exact solvers~\cite{cfpp2014}. In
particular, this opens the door to proving optimal complexity for
AFEM beyond
symmetric energy minimization problems.
Moreover, problems exhibiting additional difficulties such as nonsymmetric
linear elliptic PDEs, see~\cite{aisfem}, or nonlinear PDEs,
see~\cite{hpsv2021}, ask for more intricate (nested) solvers that treat
iterative symmetrization/linearization together with solving the arising
linear SPD systems. This leads to computed approximations $u_\ell^{k,j} \approx
	u_\ell^\star$ with symmetrization/linearization iteration counter \(k = k[\ell]\)
and
algebraic solver index \(j = j[\ell, k]\). The new proof of full linear
convergence allows
to improve the analysis of~\cite{aisfem,hpsv2021} by relaxing the choice of
the solver-stopping parameters. Additionally, in the setting of~\cite{aisfem}, we
are able to show that the full linear convergence holds from the arbitrary
\textsl{initial}
mesh onwards
instead of the \textsl{a~priori} unknown and possibly large mesh threshold
level $\ell_0>0$. In particular, unlike the previous works~\cite{cn2012, ffp2014,bhp2017, aisfem} that employ a quasi-Pythagorean identity, the new analysis shows that the constants in the full R-linear convergence are independent of \(\mathcal{T}_0\) and/or the sequence of adaptively generated meshes and therefore fixed \textsl{a~priori}.
Furthermore, the new analysis does not only improve the state-of-the-art theory
of full linear
convergence leading to  optimal complexity,
but also allows the choice of larger solver-stopping parameters
which also leads to a better numerical performance in experiments.

The remainder of this work is structured as follows:
As a model problem, Section~\ref{section:preliminaries} formulates
a general second-order linear elliptic PDE together with the validity of the so-called \emph{axioms of adaptivity} from~\cite{cfpp2014} and the quasi-orthogonality from~\cite{feischl2022}.
In Section~\ref{section:exact}, AFEM with exact solver~\eqref{eq:semr:exact} is presented in Algorithm~\ref{algorithm:exact} and, for completeness and eased readability of the later sections, Theorem~\ref{theorem:exact:convergence}
summarizes the proof of R-linear convergence~\eqref{eq:intro:axioms} from~\cite{cfpp2014, feischl2022}.
Section~\ref{section:single} focuses on AFEM with inexact
contractive solver~\eqref{eq:semr:inexact} detailed in
Algorithm~\ref{algorithm:single}.
The main contribution is
the new and more general
proof of full R-linear convergence of Theorem~\ref{theorem:single:convergence}.
Corollary~\ref{corollary:rates:complexity} proves the important
equivalence~\eqref{eq:intro:complexity}.
The case of AFEM with \textsl{nested} contractive solvers, which are useful for nonlinear or nonsymmetric problems, is treated in Section~\ref{section:double} by presenting Algorithm~\ref{algorithm:double} from~\cite{aisfem} and improving its main result in
Theorem~\ref{theorem:double:convergence}.
In Section~\ref{section:conclusion}, we discuss the impact of the new analysis on AFEM for nonlinear PDEs. We show that Theorem~\ref{theorem:double:convergence} applies also to the setting from~\cite{hpsv2021}, namely strongly monotone PDEs with scalar nonlinearity.
Numerical experiments and remarks are discussed in-depth in Section~\ref{section:numerics}, where the impact of the adaptivity parameters on the overall cost is investigated empirically.

Throughout the proofs, the notation $A \lesssim B$ abbreviates $A \le C B$ for some positive constant $C > 0$ whose dependencies are clearly presented in the respective theorem and $A \simeq B$ abbreviates $A \lesssim B \lesssim A$.

\def\boxed#1{#1}

\section{General second-order linear elliptic PDEs}
\label{section:preliminaries}

On a bounded polyhedral Lipschitz domain $\Omega \subset \mathbb{R}^d$, $d \ge 1$,
we consider the PDE
\begin{equation}\label{eq:strongform}
	-\operatorname{div}(\boldsymbol{A} \nabla u^\star) + \boldsymbol{b} \cdot \nabla u^\star +
	cu^\star = f - \operatorname{div} \boldsymbol{f} \text{ in } \Omega
	\quad \text{subject to}\quad
	u^\star = 0 \text{ on } \partial\Omega,
\end{equation}
where $\boldsymbol{A}, \boldsymbol{b}, c \in L^\infty(\Omega)$ and
$\boldsymbol{f}, f \in L^2(\Omega)$ with, for almost every $x \in \Omega$,
positive definite $\boldsymbol{A}(x) \in \mathbb{R}^{d \times d}_{\textup{sym}}$,
$\boldsymbol{b}(x), \boldsymbol{f}(x) \in \mathbb{R}^d$, and $c(x), f(x) \in \mathbb{R}$. With
$\langle \cdot, \cdot \rangle_{L^2(\Omega)}$ denoting the usual $L^2(\Omega)$-scalar
product, we suppose that the PDE fits into the setting of the Lax--Milgram
lemma, i.e., the bilinear forms
\begin{equation*}
	a(u, v) \coloneqq \langle \boldsymbol{A} \nabla u, \nabla v \rangle_{L^2(\Omega)}
	\quad \text{and} \quad
	b(u, v)
	\coloneqq
	a(u, v) + \langle \boldsymbol{b} \cdot \nabla u + cu, v \rangle_{L^2(\Omega)}
\end{equation*}
are continuous and elliptic on $H^1_0(\Omega)$. Then, indeed, $a(\cdot, \cdot)$ is a scalar product and $|\mkern-1.5mu|\mkern-1.5mu| u |\mkern-1.5mu|\mkern-1.5mu| \coloneqq a(u,u)^{1/2}$ defines an equivalent norm on $H^1_0(\Omega)$. Moreover, the weak formulation
\begin{equation*}
	b(u^\star, v) = F(v) \coloneqq \langle f, v \rangle_{L^2(\Omega)} +
	\langle \boldsymbol{f}, \, \nabla v \rangle_{L^2(\Omega)}
	\quad \text{for all } v \in H^1_0(\Omega)
\end{equation*}
admits a unique solution $u^\star \in H^1_0(\Omega)$. Let \( 0 < C_{\textup{ell}} \le C_{\textup{bnd}} \) denote the continuity and ellipticity constant of $b(\cdot,\cdot)$ with
respect to \(|\mkern-1.5mu|\mkern-1.5mu| \, \cdot \,  |\mkern-1.5mu|\mkern-1.5mu|\), i.e., there holds
\begin{equation*}
	C_{\textup{ell}} \, |\mkern-1.5mu|\mkern-1.5mu| v |\mkern-1.5mu|\mkern-1.5mu|^2 \le b(v,v)
	\quad
	\text{and}
	\quad
	| b(v,w) | \le C_{\textup{bnd}} \, |\mkern-1.5mu|\mkern-1.5mu| v |\mkern-1.5mu|\mkern-1.5mu|
	\, |\mkern-1.5mu|\mkern-1.5mu| w |\mkern-1.5mu|\mkern-1.5mu|
	\quad \text{for all } v,w \in \mathcal{X}.
\end{equation*}

	\begin{remark}
	\label{remark:Garding}
	We stress that the analysis below extends to problems where the associated
	bilinear form $b(\cdot,\cdot)$ is not coercive but satisfies only a
	Gårding-type inequality allowing for well-posed more general second-order linear
	elliptic PDEs. In this context, it is well-known that the well-posedness of the discrete 
	FEM problems and validity of a uniform inf-sup condition require that the triangulations (i.e., the initial triangulation \(\mathcal{T}_0\) below) are sufficiently fine; see, e.g., \cite{bhp2017}.
	However, the question of an optimal algebraic
	solver for the resulting linear systems is beyond the scope of this work
	and is left for future research.
	\end{remark}

Let \(\mathcal{T}_{0}\) be an initial conforming triangulation of
\(\Omega \subset \mathbb{R}^d\) into compact simplices.
The mesh refinement employs newest-vertex bisection (NVB). We refer
to
\cite{stevenson2008} for NVB with admissible $\mathcal{T}_0$ and \(d \ge 2\), to
\cite{Karkulik2013a} for NVB with general \(\mathcal{T}_0\) for \(d=2\), and to the
recent work \cite{dgs2023} for NVB with general $\mathcal{T}_0$ in any dimension
\(d \ge 2\). For \(d = 1\), we refer to \cite{AFFKP13}.
For each
triangulation \(\mathcal{T}_H\) and
\(\mathcal{M}_H \subseteq \mathcal{T}_H\), let
\(
\mathcal{T}_h \coloneqq \texttt{refine}(\mathcal{T}_H, \mathcal{M}_H)
\)
be the coarsest conforming refinement of \(\mathcal{T}_H\) such that
at least all elements \(T \in \mathcal{M}_H\)
have been refined, i.e.,
\(\mathcal{M}_H  \subseteq \mathcal{T}_H \setminus \mathcal{T}_h\).
To abbreviate notation, we write
\(
\mathcal{T}_h \in \mathbb{T}(\mathcal{T}_H)
\)
if \(\mathcal{T}_h\) can be obtained from \(\mathcal{T}_H\) by finitely many
steps of NVB and, in particular,
\(\mathbb{T} \coloneqq \mathbb{T}(\mathcal{T}_0)\).

For each $\mathcal{T}_H \in \mathbb{T}$, we consider conforming
finite element spaces
\begin{equation}\label{eq:discrete_space}
	\mathcal{X}_H \coloneqq \{v_H \in H^1_0(\Omega) \,:\, v_H|_T \text{ is a polynomial of total degree} \le p
	\text{ for all } T \in \mathcal{T}_H\},
\end{equation}
where
$p \in \mathbb{N}$ is
a fixed polynomial degree.
We note that \(\mathcal{T}_h \in \mathbb{T}(\mathcal{T}_H)\) yields nestedness \(\mathcal{X}_H \subseteq \mathcal{X}_h\) of the corresponding discrete spaces.

Given $\mathcal{T}_H \in \mathbb{T}$, there exists a unique Galerkin solution
$u_H^\star \in \mathcal{X}_H$ solving
\begin{equation}\label{eq:discrete}
	b(u^\star_H,v_H) = F(v_H)
	\quad \text{for all } v_H \in \mathcal{X}_H.
\end{equation}
Moreover, there holds
the following C\'ea lemma
\begin{equation}\label{eq:cea}
	|\mkern-1.5mu|\mkern-1.5mu| u^\star - u_H^\star |\mkern-1.5mu|\mkern-1.5mu|
	\le
	C_{\textup{Céa}} \, \min_{v_H \in \mathcal{X}_H} |\mkern-1.5mu|\mkern-1.5mu| u^\star - v_H |\mkern-1.5mu|\mkern-1.5mu|
	\quad \text{with a constant } 1 \le C_{\textup{Céa}} \le C_{\textup{bnd}}/ C_{\textup{ell}},
\end{equation}
where \(C_{\textup{Céa}} \to 1\) as adaptive mesh-refinement progresses as shown in~\cite[Theorem~20]{bhp2017}.

We consider the residual error estimator~\(\eta_H(\cdot)\)
defined, for \(T \in
\mathcal{T}_H\) and \(v_H \in \mathcal{X}_H\), by
\begin{subequations}\label{eq:definition_eta}
	\begin{align}\label{eq:definition_eta:a}
		\begin{split}
			\eta_H(T, v_H)^2
			 & \coloneqq
			|T|^{2/d} \, \Vert -\operatorname{div}(\boldsymbol{A} \nabla v_H - \boldsymbol{f}) + \boldsymbol{b} \cdot \nabla v_H + c \, v_H - f
			\Vert_{L^2(T)}^2
			\\& \qquad
			+
			|T|^{1/d} \, \Vert \lbrack\!\lbrack (\boldsymbol{A} \nabla v_H - \boldsymbol{f}) \cdot n \rbrack\!\rbrack
			\Vert_{L^2(\partial T \cap \Omega)}^2,
		\end{split}
	\end{align}
	where \(\lbrack\!\lbrack \cdot \rbrack\!\rbrack\) denotes the jump over $(d-1)$-dimensional
	faces. Clearly, the well-posedness of~\eqref{eq:definition_eta:a} requires
	more
	regularity of \(\boldsymbol{A}\) and \(\boldsymbol{f}\) than stated above, e.g.,
	\(\boldsymbol{A} \vert_T, \boldsymbol{f} \vert_T \in W^{1, \infty}(T)\) for all \(T \in
	\mathcal{T}_{0}\).
	To abbreviate notation, we define, for all $\mathcal{U}_H \subseteq \mathcal{T}_H$ and all $v_H \in \mathcal{X}_H$,
	\begin{equation}
		\eta_H(v_H)
		\coloneqq
		\eta_H(\mathcal{T}_H, v_H)
		\quad \text{with} \quad
		\eta_H(\mathcal{U}_H, v_H)
		\coloneqq \Big( \sum_{T \in \mathcal{U}_H} \eta_H(T, v_H)^2
		\Big)^{1/2}.
	\end{equation}
\end{subequations}
From~\cite{cfpp2014}, we recall that the error estimator satisfies the following properties.

\begin{proposition}[axioms of adaptivity]\label{prop:axioms}
	There exist constants $C_{\textup{stab}}, C_{\textup{rel}}, C_{\textup{drel}}, C_{\textup{mon}} > 0$, and $0 <
		q_{\textup{red}} < 1$ such that the following properties are satisfied for any
	triangulation $\mathcal{T}_H \in \mathbb{T}$ and any conforming refinement $\mathcal{T}_h \in
		\mathbb{T}(\mathcal{T}_H)$
	with the corresponding Galerkin solutions $u_H^\star \in \mathcal{X}_H$, $u_h^\star \in
		\mathcal{X}_h$ to~\eqref{eq:discrete} and arbitrary $v_H \in \mathcal{X}_H$, $v_h \in \mathcal{X}_h$.
	\begin{enumerate}[leftmargin=2.7em]
		\renewcommand{\theenumi}{A\arabic{enumi}}
		\bf
		\item[(A1)]\refstepcounter{enumi}\label{axiom:stability} \textit{stability}.
		      \quad \rm
		      $|\eta_h(\mathcal{T}_h \cap \mathcal{T}_H, v_h) - \eta_H(\mathcal{T}_h \cap \mathcal{T}_H, v_H)| \le C_{\textup{stab}} \, |\mkern-1.5mu|\mkern-1.5mu| v_h - v_H |\mkern-1.5mu|\mkern-1.5mu|$.
		      \bf
		\item[(A2)]\refstepcounter{enumi}\label{axiom:reduction} \textit{reduction}.
		      \quad \rm
		      $\eta_h(\mathcal{T}_h \backslash \mathcal{T}_H, v_H) \le q_{\textup{red}} \, \eta_H(\mathcal{T}_H \backslash \mathcal{T}_h, v_H)$.
		      \bf
		\item[(A3)]\refstepcounter{enumi}\label{axiom:reliability}
		      \textit{reliability}. \quad \rm
		      $|\mkern-1.5mu|\mkern-1.5mu| u^\star - u_H^\star |\mkern-1.5mu|\mkern-1.5mu| \le C_{\textup{rel}} \,
			      \eta_H(u_H^\star)$.

		      \renewcommand{\theenumi}{A3$^{+}$}
		      \bf
		\item[(A3$^+$)]\refstepcounter{enumi}
		      \label{axiom:discrete_reliability}
		      \it
		      \textbf{discrete reliability.}
		      \quad
		      \rm
		      $
			      |\mkern-1.5mu|\mkern-1.5mu| u_h^\star - u_H^\star |\mkern-1.5mu|\mkern-1.5mu|
			      \le
			      C_{\textup{drel}} \, \eta_H(\mathcal{T}_H \backslash \mathcal{T}_h, u_H^\star).
		      $

		      \renewcommand{\theenumi}{QM}
		      \bf
		\item[(QM)]\refstepcounter{enumi}\label{eq:quasi-monotonicity}
		      \textit{quasi-monotonicity}. \quad \rm
		      $\eta_h(u_h^\star) \le C_{\textup{mon}} \,
			      \eta_H(u_H^\star)$.

	\end{enumerate}
	The constant $C_{\textup{rel}}$ depends only on uniform shape regularity of all meshes
	$\mathcal{T}_H \in \mathbb{T}$ and the dimension $d$, while $C_{\textup{stab}}$ and
	\(C_{\textup{drel}}\) additionally depend on the polynomial degree $p$. The constant
	$q_{\textup{red}}$ reads $q_{\textup{red}} \coloneqq 2^{-1/(2d)}$ for bisection-based refinement
	rules in $\mathbb{R}^d$ and the constant $C_{\textup{mon}}$ can be bounded by $C_{\textup{mon}}
		\le \min\{ 1 +
		C_{\textup{stab}}(1+C_{\textup{Céa}}) \, C_{\textup{rel}} \, , \, 1 + C_{\textup{stab}} \, C_{\textup{drel}}\}$.\qed
\end{proposition}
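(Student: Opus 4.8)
The plan is to establish the five properties one at a time; each is a classical fact about the residual error estimator~\eqref{eq:definition_eta}, so I would follow the lines of~\cite{ckns2008,cfpp2014}, and the only step I expect to demand real care is the discrete reliability~(A3$^{+}$), whose proof hinges on a localized quasi-interpolation operator adapted to newest-vertex bisection (NVB).

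For \emph{stability}~(A1), I would first observe that the mesh-size weights $|T|^{2/d}$ and $|T|^{1/d}$ coincide for every $T \in \mathcal{T}_h \cap \mathcal{T}_H$ and that, by conformity of the NVB meshes, no face of such a $T$ is refined unless $T$ itself is refined; hence the reverse triangle inequality in $\ell^2$, applied separately to the volume and the jump contributions, reduces the claim to the elementwise bound of $|T|^{2/d}\,\|{-}\operatorname{div}(\boldsymbol{A}\nabla(v_h-v_H)) + \boldsymbol{b}\cdot\nabla(v_h-v_H) + c\,(v_h-v_H)\|_{L^2(T)}^2$ and the associated jump term by $\|v_h-v_H\|_{H^1(\omega_T)}^2$ on the element patch $\omega_T$; here the data $f,\boldsymbol{f}$ drop out of the difference. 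This bound I would obtain from local inverse estimates (legitimate since $v_h-v_H$ is a $\mathcal{T}_h$-piecewise polynomial of degree $\le p$), the $L^\infty$-bounds on $\boldsymbol{A},\boldsymbol{b},c$, a scaled trace inequality for the jump term, and uniform shape regularity; summation over $T$ and the norm equivalence $\|\cdot\|_{H^1}\simeq|\mkern-1.5mu|\mkern-1.5mu|\cdot|\mkern-1.5mu|\mkern-1.5mu|$ then give $C_{\textup{stab}}$ with the asserted dependence on $d$ and $p$. For \emph{reduction}~(A2), every $T' \in \mathcal{T}_h\setminus\mathcal{T}_H$ lies in a unique parent $T \in \mathcal{T}_H\setminus\mathcal{T}_h$ with $|T'|\le|T|/2$; since $v_H$ and the data are unchanged, $v_H|_T$ is a single polynomial and $\boldsymbol{A}|_T,\boldsymbol{f}|_T$ are smooth (so no new interior jumps are created inside $T$), only the mesh-size weights shrink — the worst of them being the jump weight $|T'|^{1/d}\le 2^{-1/d}|T|^{1/d}$ — and summing over the children of $T$ and then over $T$ yields $q_{\textup{red}}^2 = 2^{-1/d}$, i.e.\ $q_{\textup{red}} = 2^{-1/(2d)}$.

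For \emph{reliability}~(A3), I would run the standard Verfürth argument: ellipticity and the Galerkin orthogonality~\eqref{eq:discrete} give $C_{\textup{ell}}\,|\mkern-1.5mu|\mkern-1.5mu| u^\star-u_H^\star|\mkern-1.5mu|\mkern-1.5mu|^2 \le b\big(u^\star-u_H^\star,\,e_H - \mathcal{I}_H e_H\big)$ with $e_H \coloneqq u^\star-u_H^\star$ and $\mathcal{I}_H$ a Scott--Zhang quasi-interpolant onto $\mathcal{X}_H$; elementwise integration by parts rewrites the right-hand side through the volume and face residuals of $u_H^\star$ tested against $e_H-\mathcal{I}_H e_H$, and the first-order Scott--Zhang approximation estimates together with shape regularity bound this by $C_{\textup{rel}}\,\eta_H(u_H^\star)\,|\mkern-1.5mu|\mkern-1.5mu| e_H|\mkern-1.5mu|\mkern-1.5mu|$, whence the claim after division. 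For \emph{discrete reliability}~(A3$^{+}$), I would repeat this computation with $u^\star$ replaced by $u_h^\star$ and invoke the discrete orthogonality $b(u_h^\star-u_H^\star,v_H)=0$ for $v_H\in\mathcal{X}_H$; the decisive ingredient is to interpolate $u_h^\star-u_H^\star\in\mathcal{X}_h$ by an operator $\mathcal{J}_h:\mathcal{X}_h\to\mathcal{X}_H$ that acts as the identity on all $\mathcal{T}_h$-elements not touching the refined region, so that all residual contributions over unrefined elements cancel and only $\eta_H(\mathcal{T}_H\setminus\mathcal{T}_h,u_H^\star)$ — up to one fixed layer of neighbouring elements, absorbed into the constant — survives. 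I expect the construction of this localized operator on general NVB meshes and the control of the boundary layer to be the main obstacle; it produces $C_{\textup{drel}}$ with a $p$-dependence stemming from the interpolation estimates.

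Finally, \emph{quasi-monotonicity}~(QM) would follow by combining the previous points. Applying (A1) with $\mathcal{T}_h$ also in the role of the coarse mesh gives $\eta_h(u_h^\star)\le\eta_h(u_H^\star)+C_{\textup{stab}}\,|\mkern-1.5mu|\mkern-1.5mu| u_h^\star-u_H^\star|\mkern-1.5mu|\mkern-1.5mu|$; splitting $\eta_h(u_H^\star)^2$ into the contributions over $\mathcal{T}_h\cap\mathcal{T}_H$ (unchanged, hence equal to $\eta_H(\mathcal{T}_h\cap\mathcal{T}_H,u_H^\star)^2$) and over $\mathcal{T}_h\setminus\mathcal{T}_H$ (bounded by $q_{\textup{red}}^2\,\eta_H(\mathcal{T}_H\setminus\mathcal{T}_h,u_H^\star)^2\le\eta_H(\mathcal{T}_H\setminus\mathcal{T}_h,u_H^\star)^2$ via (A2)), and using that $\mathcal{T}_h\cap\mathcal{T}_H$ and $\mathcal{T}_H\setminus\mathcal{T}_h$ partition $\mathcal{T}_H$, shows $\eta_h(u_H^\star)\le\eta_H(u_H^\star)$. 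It then remains to bound $|\mkern-1.5mu|\mkern-1.5mu| u_h^\star-u_H^\star|\mkern-1.5mu|\mkern-1.5mu|$, which I would do in either of two ways: by the triangle inequality and (A3) combined with the Céa estimate~\eqref{eq:cea} (using $\mathcal{X}_H\subseteq\mathcal{X}_h$, so that $|\mkern-1.5mu|\mkern-1.5mu| u^\star-u_h^\star|\mkern-1.5mu|\mkern-1.5mu|\le C_{\textup{Céa}}\,|\mkern-1.5mu|\mkern-1.5mu| u^\star-u_H^\star|\mkern-1.5mu|\mkern-1.5mu|$), which yields $C_{\textup{mon}}\le 1+C_{\textup{stab}}(1+C_{\textup{Céa}})\,C_{\textup{rel}}$, or directly by (A3$^{+}$), which yields $C_{\textup{mon}}\le 1+C_{\textup{stab}}\,C_{\textup{drel}}$; the minimum of the two is the asserted bound.
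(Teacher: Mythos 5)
The paper does not prove this proposition at all: it is recalled from the literature (the \qed{} after the statement and the sentence ``From~\cite{cfpp2014}, we recall\dots'' indicate a proof by citation to~\cite{cfpp2014, ckns2008}), and your sketch follows exactly that standard route, so in substance you are reproducing the intended proof. The individual arguments are correct: stability via inverse and trace estimates for the $\mathcal{T}_h$-piecewise polynomial $v_h-v_H$, reduction via the weight decay $|T'|\le|T|/2$ with no new interior jumps (giving precisely $q_{\textup{red}}=2^{-1/(2d)}$), reliability via Galerkin orthogonality and Scott--Zhang, and quasi-monotonicity by combining \eqref{axiom:stability}, \eqref{axiom:reduction} with either \eqref{axiom:reliability} plus the C\'ea lemma~\eqref{eq:cea} or with \eqref{axiom:discrete_reliability}, which reproduces the stated bound on $C_{\textup{mon}}$.

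One point in your treatment of discrete reliability~\eqref{axiom:discrete_reliability} needs correction. You write that the residual contributions surviving the localized interpolation extend over ``one fixed layer of neighbouring elements, absorbed into the constant''. That absorption is not possible: the right-hand side of \eqref{axiom:discrete_reliability} is the estimator over \emph{exactly} $\mathcal{T}_H\setminus\mathcal{T}_h$, and estimator contributions of unrefined neighbours are simply not controlled by $\eta_H(\mathcal{T}_H\setminus\mathcal{T}_h,u_H^\star)$, whatever the constant; with a layer left over you would only prove the weaker variant of the axiom (with an enlarged set $\mathcal{R}\supseteq\mathcal{T}_H\setminus\mathcal{T}_h$ of comparable cardinality) as formulated in~\cite{cfpp2014}. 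The sharp form stated here is obtained by pushing your own construction further: since NVB conformity guarantees that every face of an unrefined element remains unrefined, the error $u_h^\star-u_H^\star$ is a continuous $\mathcal{T}_H$-piecewise polynomial on the unrefined region, so the quasi-interpolant (with averaging sets chosen inside that region) reproduces it on \emph{every} unrefined element; the test function then vanishes identically there, and after elementwise integration by parts only volume and face terms attached to refined elements survive, with no boundary layer to absorb. With this adjustment your proof is complete and matches the cited literature.
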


In addition to the estimator properties in Proposition~\ref{prop:axioms}, we recall the following quasi-orthogonality
result from~\cite{feischl2022} as one cornerstone of the improved analysis in this paper.

\begin{proposition}[validity of quasi-orthogonality]\label{prop:orthogonality}
	There exist
	\(C_{\textup{orth}} > 0\) and \(0 < \delta \le 1\) such
	that
	the following holds: For any sequence $\mathcal{X}_\ell \subseteq \mathcal{X}_{\ell+1} \subset
		H^1_0(\Omega)$ of nested finite-dimensional subspaces, the corresponding Galerkin
	solutions $u_\ell^\star \in \mathcal{X}_\ell$ to \eqref{eq:discrete} satisfy
	\begin{enumerate}[leftmargin=2.7em]
		\renewcommand{\theenumi}{A\arabic{enumi}}
		\setcounter{enumi}{3}
		\bf
		\item[(A4)]\refstepcounter{enumi}\label{axiom:orthogonality} \textit{quasi-orthogonality}. \rm
		      \!\!$\displaystyle
			      \sum_{\ell' = \ell}^{\ell + N} |\mkern-1.5mu|\mkern-1.5mu|u^\star_{\ell'\!+\!1} \!-
			      u^\star_{\ell'} |\mkern-1.5mu|\mkern-1.5mu|^2
			      \!\le\!
			      C_{\textup{orth}} (N+1)^{1-\delta} |\mkern-1.5mu|\mkern-1.5mu| u^\star \!- u^\star_\ell |\mkern-1.5mu|\mkern-1.5mu|^2
			      \, \text{for all }\!\! \ell, N \in \mathbb{N}_0.
		      $
	\end{enumerate}
	Here, $C_{\textup{orth}}$ and \(\delta\) depend only on the
	dimension $d$, the elliptic bilinear form $b(\cdot,\cdot)$, and the chosen norm
	$|\mkern-1.5mu|\mkern-1.5mu| \cdot |\mkern-1.5mu|\mkern-1.5mu|$, but are independent of the spaces $\mathcal{X}_\ell$. \qed
\end{proposition}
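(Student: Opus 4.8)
The plan is to follow~\cite{feischl2022} and treat the nonsymmetric form as a compact, lower-order perturbation of the symmetric energy scalar product. Split $b(\cdot,\cdot) = a(\cdot,\cdot) + K(\cdot,\cdot)$ with $K(u,v) \coloneqq \langle \boldsymbol{b} \cdot \nabla u + c\,u,\, v \rangle_{L^2(\Omega)}$, fix $\ell, N \in \mathbb{N}_0$, put $M \coloneqq \ell + N$, and write $e_j \coloneqq u^\star - u_j^\star$ and $d_j \coloneqq u_{j+1}^\star - u_j^\star = e_j - e_{j+1} \in \mathcal{X}_{j+1}$. The Galerkin identity $b(e_j, v_j) = 0$ for all $v_j \in \mathcal{X}_j$ together with nestedness $\mathcal{X}_j \subseteq \mathcal{X}_{j+1}$ yields $b(d_j, v_j) = 0$ for all $v_j \in \mathcal{X}_j$ and $b(e_{M+1}, v) = 0$ for all $v \in \mathcal{X}_{M+1}$. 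Expanding $|\mkern-1.5mu|\mkern-1.5mu| e_\ell |\mkern-1.5mu|\mkern-1.5mu|^2 = a(e_\ell, e_\ell)$ along the telescope $e_\ell = \sum_{j=\ell}^M d_j + e_{M+1}$, rewriting each mixed term via $a(d_j, d_i) = b(d_j, d_i) - K(d_j, d_i) = -K(d_j, d_i)$ for $i < j$ (and likewise $a(d_j, e_{M+1}) = -K(e_{M+1}, d_j)$), and exploiting the symmetry of $a(\cdot,\cdot)$ to pair the $i<j$ and $i>j$ contributions, I arrive at the identity
\[
\sum_{j=\ell}^M |\mkern-1.5mu|\mkern-1.5mu| d_j |\mkern-1.5mu|\mkern-1.5mu|^2 = |\mkern-1.5mu|\mkern-1.5mu| e_\ell |\mkern-1.5mu|\mkern-1.5mu|^2 - |\mkern-1.5mu|\mkern-1.5mu| e_{M+1} |\mkern-1.5mu|\mkern-1.5mu|^2 + 2 \sum_{\ell \le i < j \le M} K(d_j, d_i) + 2 \sum_{j=\ell}^M K(e_{M+1}, d_j),
\]
which for $K \equiv 0$ is exactly the telescoped Pythagorean identity~\eqref{eq:pythagoras}. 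It thus remains to bound the two $K$-sums by $C\,(N+1)^{1-\delta}\,|\mkern-1.5mu|\mkern-1.5mu| e_\ell |\mkern-1.5mu|\mkern-1.5mu|^2$, up to a fraction of $\sum_{j=\ell}^M |\mkern-1.5mu|\mkern-1.5mu| d_j |\mkern-1.5mu|\mkern-1.5mu|^2$ that will be absorbed afterwards.

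The key feature is that $K(v,w)$ sees $w$ only through $\Vert w \Vert_{L^2(\Omega)}$, so $|K(d_j, w)| \lesssim |\mkern-1.5mu|\mkern-1.5mu| d_j |\mkern-1.5mu|\mkern-1.5mu| \, \Vert w \Vert_{L^2(\Omega)}$, and after regrouping $\sum_{i<j} K(d_j, d_i) = \sum_{j=\ell}^M K(d_j, u_j^\star - u_\ell^\star)$ and $\sum_j K(e_{M+1}, d_j) = K(e_{M+1}, u_{M+1}^\star - u_\ell^\star)$, the whole estimate is reduced to $L^2$-norms of differences of Galerkin solutions, such as $\Vert d_i \Vert_{L^2(\Omega)}$ and $\Vert u_j^\star - u_\ell^\star \Vert_{L^2(\Omega)}$. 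These I would control by an Aubin--Nitsche duality argument for the (again coercive) adjoint problem: using that $d_i$ is $b$-orthogonal to $\mathcal{X}_i$ and that $u_j^\star - u_\ell^\star$ is $b$-orthogonal to $\mathcal{X}_\ell$, one writes the $L^2$-norm as a supremum over adjoint solutions and subtracts their Galerkin projections, obtaining $\Vert d_i \Vert_{L^2(\Omega)} \lesssim \varrho_i \, |\mkern-1.5mu|\mkern-1.5mu| d_i |\mkern-1.5mu|\mkern-1.5mu|$ with $\varrho_i$ the adjoint best-approximation quantity of $\mathcal{X}_i$. The decisive point, and the genuine gain over the symmetric case, is that $(\varrho_i)$ is bounded, non-increasing, and tends to $0$ — a consequence of the compactness of the adjoint solution operator from $L^2(\Omega)$ into $(H^1_0(\Omega), |\mkern-1.5mu|\mkern-1.5mu| \cdot |\mkern-1.5mu|\mkern-1.5mu|)$; to make this rigorous for a possibly non-dense hierarchy, I would work with the adjoint Galerkin solution on $\overline{\bigcup_j \mathcal{X}_j}$, which by the Céa lemma~\eqref{eq:cea} changes the relevant quantities only by fixed constants.

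Inserting these bounds, together with $|\mkern-1.5mu|\mkern-1.5mu| d_j |\mkern-1.5mu|\mkern-1.5mu| \lesssim |\mkern-1.5mu|\mkern-1.5mu| e_j |\mkern-1.5mu|\mkern-1.5mu| \lesssim |\mkern-1.5mu|\mkern-1.5mu| e_\ell |\mkern-1.5mu|\mkern-1.5mu|$ for $\ell \le j$ (again from~\eqref{eq:cea} and nestedness), into the identity above and applying Cauchy--Schwarz and Young's inequality, I reach an estimate $\sum_{j=\ell}^M |\mkern-1.5mu|\mkern-1.5mu| d_j |\mkern-1.5mu|\mkern-1.5mu|^2 \le \tfrac12 \sum_{j=\ell}^M |\mkern-1.5mu|\mkern-1.5mu| d_j |\mkern-1.5mu|\mkern-1.5mu|^2 + C\,h(N)\,|\mkern-1.5mu|\mkern-1.5mu| e_\ell |\mkern-1.5mu|\mkern-1.5mu|^2$, hence $\sum_{j=\ell}^M |\mkern-1.5mu|\mkern-1.5mu| d_j |\mkern-1.5mu|\mkern-1.5mu|^2 \lesssim h(N)\,|\mkern-1.5mu|\mkern-1.5mu| e_\ell |\mkern-1.5mu|\mkern-1.5mu|^2$ after absorption. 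The main obstacle is precisely to show $h(N) \lesssim (N+1)^{1-\delta}$ for some $\delta \in (0,1]$ depending only on $d$, $b(\cdot,\cdot)$, and $|\mkern-1.5mu|\mkern-1.5mu| \cdot |\mkern-1.5mu|\mkern-1.5mu|$ and not on the spaces $\mathcal{X}_j$: a crude summation of the weights $\varrho_i$ over the window $[\ell, M]$ only yields $h(N) \sim N+1$, i.e., the trivial exponent $\delta = 0$ (which is anyway immediate from $|\mkern-1.5mu|\mkern-1.5mu| d_j |\mkern-1.5mu|\mkern-1.5mu| \lesssim |\mkern-1.5mu|\mkern-1.5mu| e_\ell |\mkern-1.5mu|\mkern-1.5mu|$). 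Extracting a strictly positive $\delta$ requires a careful summation that trades the decay of the $\varrho_i$ against the length of the index window, and this combinatorial argument — which simultaneously fixes the constants $C_{\textup{orth}}$ and $\delta$ a priori — is the technical heart of the proof; here I would reproduce the argument of~\cite{feischl2022}. A final, minor point is to justify the Aubin--Nitsche/compactness input under the sole data regularity $\boldsymbol{A}, \boldsymbol{b}, c \in L^\infty(\Omega)$, where only a small elliptic shift into $H^{1+s}(\Omega)$, $s > 0$, is available.
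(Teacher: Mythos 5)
First, note that the paper itself does not prove Proposition~\ref{prop:orthogonality}: it is quoted with \(\qed\) from~\cite{feischl2022}, and the surrounding text (Remark~\ref{remark:orthogonality}) stresses that the proof there uses \emph{only} the uniform boundedness from below of the continuous and discrete inf-sup constants. Your algebraic preparation is correct as far as it goes: with \(b=a+K\), Galerkin orthogonality and the symmetry of \(a(\cdot,\cdot)\) do yield the stated identity, and the regrouping \(\sum_{i<j}K(d_j,d_i)=\sum_j K(d_j,u_j^\star-u_\ell^\star)\), \(\sum_j K(e_{M+1},d_j)=K(e_{M+1},u_{M+1}^\star-u_\ell^\star)\) is valid; for \(K\equiv 0\) this reproduces the telescoped Pythagorean identity~\eqref{eq:pythagoras}.

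The gap lies exactly where you locate ``the technical heart''. Your mechanism for beating the trivial exponent \(\delta=0\) is compactness of the adjoint solution operator and decay of the adjoint best-approximation quantities \(\varrho_i\). This decay is purely qualitative and depends on the particular nested sequence \((\mathcal{X}_\ell)_\ell\): for an \emph{arbitrary} sequence of finite-dimensional subspaces (which is what the proposition allows) the \(\varrho_i\) need not decay at any prescribed rate, so no \(h(N)\lesssim (N+1)^{1-\delta}\) with \(\delta>0\) and \(C_{\textup{orth}},\delta\) \emph{independent of the spaces} can be extracted this way. This is precisely the known weakness of the quasi-Pythagorean arguments of~\cite{cn2012,ffp2014,bhp2017} (constants depending on \(u^\star\), the sequence \((u_\ell^\star)_\ell\), or a sufficiently fine \(\mathcal{T}_0\)) that the present paper is at pains to avoid. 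Moreover, the step you propose to ``reproduce from~\cite{feischl2022}'' is not of the kind you describe: the proof in~\cite{feischl2022} uses no compactness, no Aubin--Nitsche duality, and no decay of weights \(\varrho_i\) at all; it derives the \((N+1)^{1-\delta}\) bound from uniform inf-sup stability alone, by representing the problem as a block lower-triangular operator with respect to an orthogonal decomposition of the nested spaces and proving bounded invertibility on \(\ell^2\) and on exponentially weighted \(\ell^2\) spaces, from which the polynomial growth in \(N\) follows by interpolation of these two bounds (this is also why the result extends to noncompact perturbations, Petrov--Galerkin settings, and Gårding-type problems). So your framework cannot be completed by importing that argument: either you abandon the compactness route and follow~\cite{feischl2022} directly, or you obtain only a sequence-dependent statement that is strictly weaker than the proposition.
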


\begin{remark}\label{remark:orthogonality}
	Quasi-orthogonality~\eqref{axiom:orthogonality} is a generalization of the Pythagorean identity~\eqref{eq:pythagoras} for symmetric problems.
	Indeed, if $\boldsymbol{b} = 0$ in~\eqref{eq:strongform} and $a(\cdot,\cdot) \coloneqq b(\cdot,\cdot)$ is a scalar product, the Galerkin method for nested subspaces $\mathcal{X}_\ell \subseteq \mathcal{X}_{\ell+1} \subset H^1_0(\Omega)$ guarantees
	~\eqref{eq:pythagoras}.
	Thus, the telescopic series proves~\eqref{axiom:orthogonality} with $C_{\textup{orth}} = 1$ and \(\delta = 1\).
	We highlight that~\cite{feischl2022} proves~\eqref{axiom:orthogonality} even for more general linear problems and Petrov--Galerkin discretizations, where it is only needed that the continuous and discrete inf-sup constants are uniformly bounded from below. In particular, this applies to a wide range of mixed FEM formulations, but also to general second-order linear elliptic PDEs that only satisfy a Gårding-type inequality; see Remark~\ref{remark:Garding} above.
\end{remark}

A closer look at the proofs of R-linear convergence in Section~\ref{section:exact}--\ref{section:double} below reveals that they rely only on the properties~\eqref{axiom:stability}, \eqref{axiom:reduction}, \eqref{axiom:reliability}, \eqref{axiom:orthogonality}, and \eqref{eq:quasi-monotonicity}, but not on~\eqref{axiom:discrete_reliability}, the C\'ea lemma~\eqref{eq:cea}, or linearity of the PDE. Hence, Algorithms~\ref{algorithm:exact}, \ref{algorithm:single}, and~\ref{algorithm:double} and the corresponding Theorems~\ref{theorem:exact:convergence}, \ref{theorem:single:convergence}, and~\ref{theorem:double:convergence} apply beyond the linear problem~\eqref{eq:strongform}; see Section~\ref{section:conclusion} for
a nonlinear PDE.

\section{AFEM with exact solution}
\label{section:exact}

To outline the new proof strategy, we first consider the standard adaptive algorithm
(see, e.g.,~\cite{ckns2008}), where the arising Galerkin
systems~\eqref{eq:discrete} are solved exactly.

\begin{algorithm}[AFEM with exact solver]\label{algorithm:exact}
	Given an initial mesh $\mathcal{T}_0$ and adaptivity parameters $0 < \theta \le 1$ and $C_{\textup{mark}} \ge 1$, iterate the following steps for all $\ell = 0, 1, 2, 3, \dots$:
	\begin{enumerate}[label=(\roman*), font = \upshape]
		\item \textbf{Solve:} Compute the exact solution $u_\ell^\star \in
			      \mathcal{X}_\ell$ to~\eqref{eq:discrete}.
		\item \textbf{Estimate:} Compute the refinement indicators
		      $\eta_\ell(T, u_\ell^\star)$ for all $T \in \mathcal{T}_\ell$.
		\item \textbf{Mark:} Determine a set $\mathcal{M}_\ell \in
			      \mathbb{M}_\ell[\theta,u_\ell^\star]$ satisfying the D\"orfler marking criterion
		      \begin{equation}\label{eq:doerfler}
			      \#\mathcal{M}_\ell \le C_{\textup{mark}} \!\! \min_{\mathcal{U}_\ell^\star \in \mathbb{M}_\ell[\theta,u_\ell^\star]}\!\! \#\mathcal{U}_\ell^\star,
			      \, \text{where} \,\,
			      \mathbb{M}_\ell[\theta,u_\ell^\star] \coloneqq \bigl\{\mathcal{U}_\ell \subseteq \mathcal{T}_\ell
			      \,:\, \theta \eta_\ell(u_\ell^\star)^2 \le \eta_\ell(\mathcal{U}_\ell,
			      u_\ell^\star)^2\bigr\}.
		      \end{equation}
		\item \textbf{Refine:} Generate $\mathcal{T}_{\ell+1} \coloneqq \mathtt{refine}(\mathcal{T}_\ell,\mathcal{M}_\ell)$.
	\end{enumerate}
\end{algorithm}

The following theorem asserts convergence of Algorithm~\ref{algorithm:exact} in the spirit of~\cite{cfpp2014},
and the proof given below essentially summarizes the arguments from~\cite{feischl2022}. It will, however, be the starting point for the later generalizations, i.e., for the adaptive algorithms below with inexact solvers.
\begin{theorem}[R-linear convergence of Algorithm~\ref{algorithm:exact}]\label{theorem:exact:convergence}
	Let $0 < \theta \le 1$ and $C_{\textup{mark}} \ge 1$ be arbitrary. Then,
	Algorithm~\ref{algorithm:exact} guarantees R-linear convergence of the
	estimators $\eta_\ell(u_\ell^\star)$, i.e.,
	there exist constants $0 < q_{\textup{lin}} < 1$ and $C_{\textup{lin}} > 0$ such that
	\begin{equation}\label{eq:exact:convergence}
		\eta_{\ell+n}(u^\star_{\ell+n}) \le C_{\textup{lin}} \, q_{\textup{lin}}^n \, \eta_\ell(u^\star_\ell)
		\quad \text{for all } \ell, n \in \mathbb{N}_0.
	\end{equation}
\end{theorem}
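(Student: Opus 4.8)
The plan is to follow the strategy of \cite{cfpp2014,feischl2022}: first derive a one-step \emph{estimator reduction}, then feed in reliability~\eqref{axiom:reliability} and the generalized quasi-orthogonality~\eqref{axiom:orthogonality} to control the accumulated Galerkin increments, and finally convert the resulting (merely subadditive) bound into geometric decay by an averaging argument over a window of fixed length. \emph{Step 1 (estimator reduction).} Write $\eta_\ell \coloneqq \eta_\ell(u_\ell^\star)$ and $d_\ell \coloneqq |\mkern-1.5mu|\mkern-1.5mu| u_{\ell+1}^\star - u_\ell^\star |\mkern-1.5mu|\mkern-1.5mu|^2$. Splitting $\mathcal{T}_{\ell+1}$ into the kept elements $\mathcal{T}_{\ell+1}\cap\mathcal{T}_\ell$ and the refined ones $\mathcal{T}_{\ell+1}\setminus\mathcal{T}_\ell$ (recall $\mathcal{X}_\ell\subseteq\mathcal{X}_{\ell+1}$, so $u_\ell^\star\in\mathcal{X}_{\ell+1}$), applying stability~\eqref{axiom:stability} on the former, a stability step followed by reduction~\eqref{axiom:reduction} on the latter, and Young's inequality with a small parameter $\varepsilon>0$, one arrives at $\eta_{\ell+1}^2 \le (1+\varepsilon)\,\eta_\ell(\mathcal{T}_{\ell+1}\cap\mathcal{T}_\ell,u_\ell^\star)^2 + (1+\varepsilon)\,q_{\textup{red}}^2\,\eta_\ell(\mathcal{T}_\ell\setminus\mathcal{T}_{\ell+1},u_\ell^\star)^2 + C(\varepsilon)\,C_{\textup{stab}}^2\,d_\ell$. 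Since $\mathcal{M}_\ell\subseteq\mathcal{T}_\ell\setminus\mathcal{T}_{\ell+1}$, the D\"orfler property~\eqref{eq:doerfler} gives $\theta\,\eta_\ell^2\le\eta_\ell(\mathcal{T}_\ell\setminus\mathcal{T}_{\ell+1},u_\ell^\star)^2$, which, together with $\eta_\ell(\mathcal{T}_{\ell+1}\cap\mathcal{T}_\ell,u_\ell^\star)^2+\eta_\ell(\mathcal{T}_\ell\setminus\mathcal{T}_{\ell+1},u_\ell^\star)^2=\eta_\ell^2$, yields $(1+\varepsilon)$ times a factor $\le (1+\varepsilon)\big(1-(1-q_{\textup{red}}^2)\theta\big)$. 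Choosing $\varepsilon$ small enough makes this factor $q_{\textup{est}}\in(0,1)$ for \emph{every} $0<\theta\le1$, and we obtain $\eta_{\ell+1}^2 \le q_{\textup{est}}\,\eta_\ell^2 + C_{\textup{est}}\,d_\ell$ with $q_{\textup{est}},C_{\textup{est}}$ depending only on $\theta,q_{\textup{red}},C_{\textup{stab}}$; this uses nothing about symmetry of the PDE.

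\emph{Step 2 (accumulated increments).} Iterating the reduction of Step 1 and summing a geometric series gives, for all $\ell\in\mathbb{N}_0$ and $N\ge1$,
\[
	\sum_{n=1}^{N}\eta_{\ell+n}^2 \le \frac{1}{1-q_{\textup{est}}}\,\eta_\ell^2 + \frac{C_{\textup{est}}}{1-q_{\textup{est}}}\sum_{\ell'=\ell}^{\ell+N-1} d_{\ell'}.
\]
Now quasi-orthogonality~\eqref{axiom:orthogonality} combined with reliability~\eqref{axiom:reliability} bounds $\sum_{\ell'=\ell}^{\ell+N-1} d_{\ell'} \le C_{\textup{orth}}\,N^{1-\delta}\,|\mkern-1.5mu|\mkern-1.5mu| u^\star - u_\ell^\star|\mkern-1.5mu|\mkern-1.5mu|^2 \le C_{\textup{orth}}\,C_{\textup{rel}}^2\,N^{1-\delta}\,\eta_\ell^2$. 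Hence there is a constant $C_\star>0$, depending only on the quantities above, with $\sum_{n=1}^{N}\eta_{\ell+n}^2 \le C_\star\,N^{1-\delta}\,\eta_\ell^2$ for all $\ell\in\mathbb{N}_0$ and all $N\ge1$.

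\emph{Step 3 (subadditivity $\Rightarrow$ geometric decay).} This is where $\delta\in(0,1]$, rather than the Pythagorean case $\delta=1$, forces a nonclassical argument: the bound of Step 2 grows in $N$, so it does \emph{not} give tail-summability directly. Instead, by the pigeonhole principle, $\min_{1\le n\le N}\eta_{\ell+n}^2 \le N^{-1}\sum_{n=1}^{N}\eta_{\ell+n}^2 \le C_\star\,N^{-\delta}\,\eta_\ell^2$. Fix $n_0\in\mathbb{N}$ so large that $q_1\coloneqq C_{\textup{mon}}^2\,C_\star\,n_0^{-\delta}<1$. Then for each $\ell$ there is some $m=m(\ell)\in\{1,\dots,n_0\}$ with $\eta_{\ell+m}^2 \le C_\star\,n_0^{-\delta}\,\eta_\ell^2$, and quasi-monotonicity~\eqref{eq:quasi-monotonicity} (valid since $\mathcal{T}_{\ell+n_0}\in\mathbb{T}(\mathcal{T}_{\ell+m})$) upgrades this to the uniform one-window contraction $\eta_{\ell+n_0}^2 \le C_{\textup{mon}}^2\,\eta_{\ell+m}^2 \le q_1\,\eta_\ell^2$ for all $\ell\in\mathbb{N}_0$. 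Iterating gives $\eta_{\ell+jn_0}^2 \le q_1^{\,j}\,\eta_\ell^2$, and for arbitrary $n=jn_0+r$ with $0\le r<n_0$ another use of~\eqref{eq:quasi-monotonicity} together with $q_1^{\,j}\le q_1^{-1}\,(q_1^{1/n_0})^{n}$ yields $\eta_{\ell+n}^2 \le (C_{\textup{mon}}^2/q_1)\,(q_1^{1/n_0})^{n}\,\eta_\ell^2$; taking square roots proves~\eqref{eq:exact:convergence} with $q_{\textup{lin}}\coloneqq q_1^{1/(2n_0)}\in(0,1)$ and $C_{\textup{lin}}\coloneqq C_{\textup{mon}}/q_1^{1/2}>1$. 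I expect Step 3 to be the main obstacle: one must recognize that superlinear growth in $N$ is harmless and can be traded for a single window contraction via averaging plus quasi-monotonicity, which replaces the telescoping/Pythagoras argument of the symmetric theory and is exactly what removes any smallness assumption on $\theta$ and any dependence of $C_{\textup{lin}},q_{\textup{lin}}$ on $\mathcal{T}_0$ or on the generated mesh sequence; Steps 1 and 2 are routine combinations of the axioms.
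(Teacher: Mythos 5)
Your proposal is correct, and Steps~1--2 coincide with the paper's argument: the estimator reduction from stability~\eqref{axiom:stability}, reduction~\eqref{axiom:reduction}, and D\"orfler marking~\eqref{eq:doerfler}, followed by the windowed summability bound $\sum_{n=1}^{N}\eta_{\ell+n}(u_{\ell+n}^\star)^2\lesssim N^{1-\delta}\,\eta_\ell(u_\ell^\star)^2$ obtained from quasi-orthogonality~\eqref{axiom:orthogonality} and reliability~\eqref{axiom:reliability}, are exactly the hypotheses that the paper feeds into Lemma~\ref{lemma:summability:criterion}. Where you genuinely depart from the paper is the conversion of this $N$-growing bound into geometric decay. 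The paper's Lemma~\ref{lemma:summability:criterion} proves, by induction, the purely sequential estimate $a_{\ell+n}^2\le\bigl(\prod_{j=1}^n(1-D_j^{-1})\bigr)\sum_{\ell'=\ell}^{\ell+n}a_{\ell'}^2$ and then shows that $\log\bigl[\bigl(\prod_{j=1}^{n}(1-D_j^{-1})\bigr)D_n\bigr]\to-\infty$ because $\sum_j j^{-(1-\delta)}$ outgrows $(1-\delta)\log n$; the needed quasi-monotonicity of the abstract sequence is itself \emph{derived} from the perturbed contraction and the bound $b_{\ell+N}\le C_1 a_\ell$. You instead use the pigeonhole principle to locate one index $m\le n_0$ in the window where the squared estimator has dropped by the factor $C_\star n_0^{-\delta}$, and then invoke the estimator axiom~\eqref{eq:quasi-monotonicity} to transport this decay to the end of the window. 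Both routes end with the same ``uniform contraction over a window of length $n_0$ plus gap-filling by quasi-monotonicity'' structure, and both exploit only $\delta>0$, so neither imposes smallness of $\theta$. Your version is shorter and more transparent, at the price of using~\eqref{eq:quasi-monotonicity} as an additional structural input on the estimator; the paper's Lemma~\ref{lemma:summability:criterion} is a self-contained statement about nonnegative sequences, which is what allows it to be reused verbatim for the quasi-errors of the inexact-solver algorithms in Sections~\ref{section:single} and~\ref{section:double}, where the relevant sequence is not a priori quasi-monotone in the sense of~\eqref{eq:quasi-monotonicity}. For Theorem~\ref{theorem:exact:convergence} itself, your argument is complete and the resulting constants have the same dependencies.
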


\begin{remark}\label{rem:Rlinear}
	For vanishing convection \(\boldsymbol{b} = 0\) in~\eqref{eq:strongform} and  $a(\cdot,\cdot) \coloneqq b(\cdot,\cdot)$,~\cite{ckns2008} proves linear convergence of the quasi-error~\eqref{eq:intro:ckns}.
	Together with reliability~\eqref{axiom:reliability}, this yields R-linear convergence of the estimator sequence
	\begin{equation*}
		\eta_{\ell+n}(u_{\ell+n}^\star)
		\le \frac{(C_{\textup{rel}}^2+\gamma)^{1/2}}{\gamma^{1/2}} \, q_{\textup{ctr}}^n \,
		\eta_\ell(u_\ell^\star)
		\quad \text{for all } \ell, n \in \mathbb{N}_0.
	\end{equation*}
	In this sense, Theorem~\ref{theorem:exact:convergence} is weaker than linear convergence~\eqref{eq:intro:ckns} from~\cite{ckns2008}, but provides a direct proof of R-linear convergence even if \(b(\cdot, \cdot) \neq a(\cdot, \cdot)\).
	Moreover, while the proof of~\eqref{eq:intro:ckns}
	crucially relies on the Pythagorean identity~\eqref{eq:pythagoras},
	the works~\cite{ffp2014,bhp2017} extend the analysis to
	the general second-order linear elliptic PDE~\eqref{eq:strongform}
	using
	\begin{equation}\label{eq:general-pythagoras}
		\forall 0 < \varepsilon < 1 \, \exists \ell_0 \in \mathbb{N}_0 \, \forall
		\ell \ge \ell_0 \colon \quad
		|\mkern-1.5mu|\mkern-1.5mu| u^\star - u^\star_{\ell+1} |\mkern-1.5mu|\mkern-1.5mu|^2
		\le
		|\mkern-1.5mu|\mkern-1.5mu| u^\star - u^\star_\ell |\mkern-1.5mu|\mkern-1.5mu|^2 - \varepsilon \,
		|\mkern-1.5mu|\mkern-1.5mu| u^\star_{\ell+1} - u^\star_\ell |\mkern-1.5mu|\mkern-1.5mu|^2.
	\end{equation}
	From this, contraction~\eqref{eq:intro:ckns} follows
	for all $\ell \ge
		\ell_0$ and allows to extend the AFEM analysis
	from~\cite{stevenson2007,ckns2008} to
	general second-order linear elliptic PDE.
	However,
	the
	index $\ell_0$ depends on the exact solution $u^\star$ and on the
	sequence of exact discrete solutions
	$(u_\ell^\star)_{\ell \in \mathbb{N}_0}$. Moreover, $\ell_0 = 0$ requires sufficiently fine $\mathcal{T}_0$ in~\cite{cn2012,bhp2017} while the constants in~\eqref{eq:exact:convergence} depend on \(u^\star\) and the sequence \((u_\ell^\star)_{\ell \in \mathbb{N}_0}\) in~\cite{ffp2014}.
	In the present work however, R-linear convergence~\eqref{eq:exact:convergence} from Theorem~\ref{theorem:exact:convergence} holds with $\ell_0 = 0$ and any initial mesh $\mathcal{T}_0$, and the constants are independent of \(u^\star\) and
	\((u_\ell^\star)_{\ell \in \mathbb{N}_0}\), thus clearly improving the previous state of the art.
\end{remark}

The proof of Theorem~\ref{theorem:exact:convergence} relies on the following elementary lemma that extends
arguments implicitly found for the estimator
sequence
in~\cite{feischl2022} but that will be employed for certain
quasi-errors in the present work. Its proof is found in~\ref{appendix}.

\begin{lemma}[tail summability criterion]\label{lemma:summability:criterion}
	Let $(a_\ell)_{\ell \in \mathbb{N}_0}, (b_\ell)_{\ell \in \mathbb{N}_0}$ be scalar sequences in $\mathbb{R}_{\ge 0}$.
	With given constants $0 < q < 1$, \(0 < \delta <
	1\), and $C_1, C_2 > 0$, suppose that
	\begin{equation}\label{eq:summability:criterion}
		a_{\ell+1} \le q a_\ell + b_\ell,
		\quad
		b_{\ell+N} \le C_1 \, a_\ell,
		\,\, \text{and} \,\,
		\sum_{\ell' = \ell}^{\ell+N} b_{\ell'}^2 \le C_2 \,
		(N+1)^{1-\delta} \, a_\ell^2
		\,\,\, \text{for all } \ell, N \in \mathbb{N}_0.
	\end{equation}
	Then, $(a_\ell)_{\ell \in \mathbb{N}_0}$ is R-linearly convergent, i.e.,
	there exist
	\(C_{\textup{lin}} > 0\) and \(0 < q_{\textup{lin}} < 1\)
	with
	\begin{equation}\label{eq:Rlinear:convergence}
		a_{\ell+n} \le C_{\textup{lin}} \, q_{\textup{lin}}^n \, a_\ell
		\quad
		\text{for all \(\ell, n \in \mathbb{N}_0\).}
	\end{equation}
\end{lemma}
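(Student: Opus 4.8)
The plan is to squeeze genuine geometric decay of $(a_\ell)_\ell$ out of the interplay between the perturbed contraction $a_{\ell+1}\le q a_\ell + b_\ell$ and the subcritical window bound $\sum_{\ell'=\ell}^{\ell+N}b_{\ell'}^2\le C_2(N+1)^{1-\delta}a_\ell^2$, in three steps. Since~\eqref{eq:Rlinear:convergence} is a pure inequality, the degenerate case $a_\ell=0$ -- which by~\eqref{eq:summability:criterion} forces $a_m=b_m=0$ for all $m\ge\ell$ -- needs no separate treatment.

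\textbf{Step 1: uniform quasi-monotonicity of $(a_\ell)_\ell$.} The bound $b_{\ell+N}\le C_1 a_\ell$ gives, for every $j\ge0$, that $b_{\ell+j}\le C_1 a_\ell$; plugging this into the iterated contraction $a_{\ell+n}\le q^n a_\ell+\sum_{j=0}^{n-1}q^{n-1-j}b_{\ell+j}$ and summing the geometric series yields $a_m\le C_0 a_\ell$ for all $m\ge\ell$, with $C_0:=1+C_1/(1-q)$.

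\textbf{Step 2: a discrete quasi-orthogonality for $(a_\ell^2)_\ell$.} Squaring the contraction and using Young's inequality with the free parameter chosen so that the leading constant becomes $\widetilde q:=(1+q^2)/2<1$, one gets $a_{\ell+1}^2\le\widetilde q\,a_\ell^2+C_q b_\ell^2$ for an explicit $C_q>0$. Summing over $\ell'=\ell,\dots,\ell+N$, absorbing the $\widetilde q$-term on the left, and invoking the hypothesis on $\sum b_{\ell'}^2$ then delivers $\sum_{\ell'=\ell}^{\ell+N}a_{\ell'}^2\le C_3(N+1)^{1-\delta}a_\ell^2$ for all $\ell,N\in\mathbb{N}_0$, i.e.\ the squares of $(a_\ell)_\ell$ obey a bound of the very same shape as~\eqref{axiom:orthogonality}.

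\textbf{Step 3: geometric decay via a fixed lookahead.} Fix an integer $r\ge1$. For $m\ge r$, Step~1 bounds $a_{\ell+m}^2\le C_0^2 a_{\ell+j}^2$ for each $j\in\{m-r,\dots,m-1\}$; averaging over these $r$ indices and applying Step~2 to that window of length $r$ yields $a_{\ell+m}^2\le C_0^2 C_3\,r^{-\delta}a_{\ell+m-r}^2$. Because $\delta>0$, I would now fix $r=r_0$ once and for all, large enough that $\kappa:=C_0^2 C_3\,r_0^{-\delta}<1$, which turns the previous display into $a_\nu^2\le\kappa\,a_{\nu-r_0}^2$ for all $\nu\ge r_0$. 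Iterating this $p:=\lfloor n/r_0\rfloor$ times starting from $\nu=\ell+n$ (all indices in the chain stay $\ge r_0$, and the terminal one lies in $\{\ell,\dots,\ell+r_0-1\}$) and closing with Step~1 gives $a_{\ell+n}^2\le C_0^2\kappa^p a_\ell^2\le(C_0^2/\kappa)\,(\kappa^{1/r_0})^n a_\ell^2$; taking square roots proves~\eqref{eq:Rlinear:convergence} with $q_{\textup{lin}}:=\kappa^{1/(2r_0)}\in(0,1)$ and $C_{\textup{lin}}:=C_0\kappa^{-1/2}$.

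\textbf{The main obstacle.} The naive attempt -- summing the iterated contraction and estimating $\sum_{j=0}^{N-1}b_{\ell+j}$ by Cauchy--Schwarz and the third hypothesis -- only produces $\sum_{n=0}^N a_{\ell+n}\lesssim(N+1)^{1-\delta/2}a_\ell$, which diverges in $N$ and yields no decay whatsoever. The crux is Step~3: quasi-monotonicity permits replacing a single late term $a_{\ell+m}^2$ by the \emph{average} of the squares over a window of \emph{fixed} length $r_0$, and the subcritical exponent $1-\delta$ in the window bound then loses against the window length $r_0$, leaving the honest contraction factor $r_0^{-\delta}<1$. Getting this bootstrap right -- in particular verifying that the iteration in Step~3 never drops below index $r_0$ and that $r_0$ depends only on $q,\delta,C_1,C_2$ (hence the final constants are fixed \emph{a~priori}) -- is where the real work lies.
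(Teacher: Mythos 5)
Your proposal is correct, and it gives a valid alternative to the paper's argument for the decisive final step. Your Steps~1 and~2 coincide with the paper's: iterate the perturbed contraction and use $b_{\ell+N}\le C_1 a_\ell$ to get quasi-monotonicity $a_m\le C_0\,a_\ell$ for $m\ge\ell$, then square via Young, sum, and absorb to obtain the window bound $\sum_{\ell'=\ell}^{\ell+N}a_{\ell'}^2\le C_3(N+1)^{1-\delta}a_\ell^2$ (the paper writes this with a constant $D_N\simeq N^{1-\delta}$). Where you diverge is in converting this subcritical window bound into geometric decay. The paper proceeds by an induction showing $a_{\ell+n}^2\le\bigl(\prod_{j=1}^{n}(1-D_j^{-1})\bigr)\sum_{\ell'=\ell}^{\ell+n}a_{\ell'}^2$, hence $a_{\ell+n}^2\le\bigl(\prod_{j=1}^{n}(1-D_j^{-1})\bigr)D_n\,a_\ell^2$, and then an asymptotic estimate ($\log D_n\simeq(1-\delta)\log n$ versus $\sum_{j\le n}D_j^{-1}\gtrsim\sum_{j\le n}j^{-(1-\delta)}$) to find a block length $n_0$ with prefactor below one; block iteration plus quasi-monotonicity then finishes. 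You instead use quasi-monotonicity to dominate $a_{\ell+m}^2$ by the average of the squares over a window of fixed length $r$, apply the window bound to that window, and obtain the block contraction $a_{\ell+m}^2\le C_0^2C_3\,r^{-\delta}a_{\ell+m-r}^2$ directly, fixing $r_0>(C_0^2C_3)^{1/\delta}$; your index bookkeeping in the block iteration ($p=\lfloor n/r_0\rfloor$ applications, all at indices $\ge r_0$, remainder handled by Step~1) is sound, and $r_0$, hence $C_{\textup{lin}}$ and $q_{\textup{lin}}$, depend only on $q,\delta,C_1,C_2$ as required. Your route is more elementary and yields fully explicit constants without any induction over products or logarithmic comparison, whereas the paper's formulation isolates the abstract mechanism (any $D_N$ with $\sum_j D_j^{-1}-\log D_n\to\infty$ suffices); for the lemma as stated the two are equally strong, and your averaging trick is arguably the cleaner proof of the block contraction.
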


\begin{proof}[\textbf{Proof of Theorem~\ref{theorem:exact:convergence}}]
	We employ Lemma~\ref{lemma:summability:criterion} for the sequences defined by $a_\ell =
		\eta_\ell(u_\ell^\star)$ and $b_\ell \coloneqq C_{\textup{stab}} \, |\mkern-1.5mu|\mkern-1.5mu| u_{\ell+1}^\star -
		u_\ell^\star |\mkern-1.5mu|\mkern-1.5mu|$.
	First, we note that
	\begin{equation}
		\label{eq:quasimonotonicity_b}
		|\mkern-1.5mu|\mkern-1.5mu| u_{\ell^{\prime \prime}}^\star - u_{\ell^\prime}^\star |\mkern-1.5mu|\mkern-1.5mu|
		\stackrel{\eqref{axiom:reliability}}\lesssim
		\eta_{\ell''}(u_{\ell''}^\star) +
		\eta_{\ell^\prime}(u_{\ell^\prime}^\star)
		\stackrel{\eqref{eq:quasi-monotonicity}}
		\lesssim
		\eta_{\ell}(u_\ell^\star)
		\quad \text{for all \(\ell, \ell^\prime, \ell^{\prime \prime} \in \mathbb{N}_0\)}
		\text{ with \(\ell \le \ell^\prime \le \ell^{\prime \prime }\)}.
	\end{equation}
	In particular, this proves \(b_{\ell+N} \lesssim a_\ell\) for all
	\(\ell, N \in \mathbb{N}_0\).
	Moreover,
	quasi-orthogonality~\eqref{axiom:orthogonality} and
	reliability~\eqref{axiom:reliability} show
	\begin{equation}\label{eq:summability_b}
		\sum_{\ell' = \ell}^{\ell+N} \, |\mkern-1.5mu|\mkern-1.5mu| u^\star_{\ell'+1} -
		u^\star_{\ell'} |\mkern-1.5mu|\mkern-1.5mu|^2
		\le C_{\textup{orth}} \, C_{\textup{rel}}^2 \, (N+1)^{1-\delta} \,
		\eta_\ell(u^\star_\ell)^2
		\quad \text{for all } \ell, N \in \mathbb{N}_0.
	\end{equation}
	In order to verify~\eqref{eq:summability:criterion}, it thus only remains to prove the perturbed
	contraction of $a_\ell$.
	To this end, let $\ell \in \mathbb{N}_0$. Then, stability~\eqref{axiom:stability} and
	reduction~\eqref{axiom:reduction} show
	\begin{equation*}
		\eta_{\ell+1}(u^\star_\ell)^2
		\le
		\eta_\ell(\mathcal{T}_{\ell+1} \cap \mathcal{T}_\ell, u^\star_\ell)^2
		+
		q_{\textup{red}}^2 \eta_\ell(\mathcal{T}_\ell \backslash \mathcal{T}_{\ell+1}, u^\star_\ell)^2
		=
		\eta_\ell(u_\ell^\star)^2
		\!-\! (1 \!-\! q_{\textup{red}}^2) \, \eta_\ell(\mathcal{T}_\ell \backslash \mathcal{T}_{\ell+1}, u^\star_\ell)^2.
	\end{equation*}
	Moreover, D\"orfler marking~\eqref{eq:doerfler} and refinement of (at least) all marked elements lead to
	\begin{equation*}
		\theta \eta_\ell(u^\star_\ell)^2
		\stackrel{\eqref{eq:doerfler}}\le
		\eta_\ell(\mathcal{M}_\ell, u^\star_\ell)^2
		\le
		\eta_\ell(\mathcal{T}_\ell \backslash \mathcal{T}_{\ell+1}, u^\star_\ell)^2.
	\end{equation*}
	The combination of the two previously displayed formulas results in
	\begin{equation*}
		\eta_{\ell+1}(u^\star_\ell)
		\le
		q_\theta \, \eta_\ell(u_\ell^\star)
		\quad \text{with} \quad
		0 < q_\theta \coloneqq \big[ 1 - (1 - q_{\textup{red}}^2) \theta \big]^{1/2} < 1.
	\end{equation*}
	Finally, stability~\eqref{axiom:stability} thus leads to the desired estimator reduction estimate
	\begin{equation}\label{eq:estimator-reduction}
		\eta_{\ell+1}(u^\star_{\ell+1})
		\stackrel{\eqref{axiom:stability}}\le
		q_\theta \, \eta_\ell(u_\ell^\star) + C_{\textup{stab}} \, |\mkern-1.5mu|\mkern-1.5mu| u_{\ell+1}^\star - u_\ell^\star |\mkern-1.5mu|\mkern-1.5mu|
		\quad \text{for all } \ell \in \mathbb{N}_0.
	\end{equation}
	Altogether, all the assumptions~\eqref{eq:summability:criterion} are satisfied and Lemma~\ref{lemma:summability:criterion} concludes the proof.
\end{proof}

\section{AFEM with contractive solver}
\label{section:single}

Let $\Psi_H \colon \mathcal{X}_H \to \mathcal{X}_H$ be the iteration mapping of
a uniformly contractive solver, i.e.,
\begin{equation}\label{eq:contractive-solver}
	|\mkern-1.5mu|\mkern-1.5mu| u^\star_H - \Psi_H(v_H) |\mkern-1.5mu|\mkern-1.5mu|
	\le
	q_{\textup{alg}} \, |\mkern-1.5mu|\mkern-1.5mu| u^\star_H - v_H |\mkern-1.5mu|\mkern-1.5mu|
	\quad \text{for all } \mathcal{T}_H \in \mathbb{T} \text{ and all } v_H \in \mathcal{X}_H.
\end{equation}
Examples of such iterative solvers include an optimally preconditioned
	conjugate gradient method from~\cite{cnx2012} or an optimal geometric multigrid
	method from~\cite{wz2017,imps2022}.
The following algorithm is thoroughly analyzed in~\cite{ghps2021}
under the
assumption that the problem is symmetric (and hence the Pythagorean
identity~\eqref{eq:pythagoras} holds).

\begin{algorithm}[AFEM with contractive solver]\label{algorithm:single}
	Given an initial mesh $\mathcal{T}_0$, adaptivity parameters $0 < \theta \le 1$ and $C_{\textup{mark}} \ge 1$, a solver-stopping parameter $\lambda > 0$, and an initial guess $u_0^0 \in \mathcal{X}_0$, iterate the following steps~\ref{alg:single_i}--\ref{alg:single_iv} for all $\ell = 0, 1, 2, 3, \dots$:
	\begin{enumerate}[label=(\roman*), ref = {\rm (\roman*)}, font = \upshape]
		\item\label{alg:single_i} \textbf{Solve \& Estimate:} For all $k = 1, 2, 3, \dots$, repeat~\ref{alg:single_a}--\ref{alg:single_b} until
		      \begin{equation}\label{eq:single:termination}
			      |\mkern-1.5mu|\mkern-1.5mu| u_\ell^k - u_\ell^{k-1} |\mkern-1.5mu|\mkern-1.5mu| \le \lambda \, \eta_\ell(u_\ell^k).
		      \end{equation}
		      \begin{enumerate}[label=(\alph*), ref = {\rm (\alph*)}, font = \upshape]
			      \item\label{alg:single_a} Compute $u_\ell^k \coloneqq \Psi_\ell(u_\ell^{k-1})$ with one step of the contractive solver.
			      \item\label{alg:single_b} Compute the refinement indicators $\eta_\ell(T, u_\ell^k)$ for all $T \in \mathcal{T}_\ell$.
		      \end{enumerate}
		\item\label{alg:single:ii} Upon termination of the iterative solver, define the index $\underline{k}[\ell] \coloneqq k \in \mathbb{N}$.
		\item \textbf{Mark:} Determine a set $\mathcal{M}_\ell \in \mathbb{M}_\ell[\theta,
				      u_\ell^{\underline{k}}]$ satisfying~\eqref{eq:doerfler} with $u_\ell^\star$  replaced by
		      $u_\ell^{\underline{k}}$.
		\item\label{alg:single_iv} \textbf{Refine:} Generate $\mathcal{T}_{\ell+1} \coloneqq \mathtt{refine}(\mathcal{T}_\ell,\mathcal{M}_\ell)$ and employ nested iteration $u_{\ell+1}^0 \coloneqq u_\ell^{\underline{k}}$.
	\end{enumerate}
\end{algorithm}

The sequential nature of Algorithm~\ref{algorithm:single} gives rise to the countably infinite index set
\begin{equation*}
	\mathcal{Q} \coloneqq \bigl\{(\ell,k) \in \mathbb{N}_0^2 \,:\, u_\ell^k \in \mathcal{X}_\ell \text{ is defined in Algorithm~\ref{algorithm:single}} \bigr \}
\end{equation*}
together with the lexicographic ordering
\begin{equation*}
	(\ell',k') \le (\ell,k)
	\quad :\Longleftrightarrow \quad
	\text{$u_{\ell'}^{k'}$ is defined not later than $u_\ell^k$ in
		Algorithm~\ref{algorithm:single}}
\end{equation*}
and the total step counter
\begin{equation*}
	| \ell, k | \coloneqq \# \bigl \{(\ell', k') \in \mathcal{Q} \,:\, (\ell', k') \le
	(\ell, k) \bigr \} \in \mathbb{N}_0
	\quad \text{for all \((\ell, k) \in \mathcal{Q}\).}
\end{equation*}
Defining the stopping indices
\begin{subequations}
	\begin{align}
		\underline{\ell}    & \coloneqq \sup \{\ell \in \mathbb{N}_0 \,:\, (\ell,0) \in \mathcal{Q}\} \in \mathbb{N}_0 \cup \{\infty\},
		\\
		\underline{k}[\ell] & \coloneqq \sup \{k \in \mathbb{N}_0 \,:\, (\ell,k) \in \mathcal{Q}\} \in \mathbb{N} \cup \{\infty\},
		\quad \text{whenever } (\ell,0) \in \mathcal{Q},
	\end{align}
\end{subequations}
we note that these definitions are consistent with that of
Algorithm~\ref{algorithm:single}\ref{alg:single:ii}.
We abbreviate \(\underline{k} = \underline{k}[\ell]\), whenever the index
\(\ell\) is clear from the context, e.g., \(u_\ell^{\underline{k}}
\coloneqq u_\ell^{\underline{k}[\ell]}\) or \((\ell, \underline{k}) = (\ell, \underline{k}[\ell])\).

As $\mathcal{Q}$ is an infinite set, the typical case is $\underline{\ell} = \infty$ and $\underline{k}[\ell] < \infty$ for all
$\ell \in \mathbb{N}_0$, whereas $\underline{\ell} < \infty$ implies that $\underline{k}[\underline{\ell}] = \infty$,
i.e., non-termination of the iterative solver on the mesh
\(\mathcal{T}_{\underline{\ell}}\). The following theorem states convergence of
Algorithm~\ref{algorithm:single}.
In particular, it shows that $\underline{\ell} < \infty$ implies
$\eta_{\underline{\ell}}(u_{\underline{\ell}}^\star) = 0$ and consequently $u^\star = u_{\underline{\ell}}^\star$ by
reliability~\eqref{axiom:reliability}.

\begin{theorem}[full R-linear convergence of Algorithm~\ref{algorithm:single}]\label{theorem:single:convergence}
	Let $0 < \theta \le 1$, $C_{\textup{mark}} \ge 1$, $\lambda > 0$, and $u_0^0 \in \mathcal{X}_0$ be arbitrary. Then,
	Algorithm~\ref{algorithm:single} guarantees R-linear convergence of the modified quasi-error
	\begin{equation}\label{eq2:single:quasi-error}
		\mathrm{H}_\ell^k
		\coloneqq
		|\mkern-1.5mu|\mkern-1.5mu| u_\ell^\star - u_\ell^k |\mkern-1.5mu|\mkern-1.5mu| + \eta_\ell(u_\ell^k),
	\end{equation}
	i.e., there exist constants $0 < q_{\textup{lin}} < 1$ and $C_{\textup{lin}} > 0$ such that
	\begin{equation}\label{eq:single:convergence}
		\mathrm{H}_\ell^k
		\le
		C_{\textup{lin}} q_{\textup{lin}}^{|\ell,k| - |\ell',k'|} \, \mathrm{H}_{\ell'}^{k'}
		\quad \text{for all } (\ell',k'),(\ell,k) \in \mathcal{Q}
		\text{ with } |\ell',k'| \le |\ell,k|.
	\end{equation}
\end{theorem}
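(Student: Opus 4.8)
The plan is to mimic the structure of the proof of Theorem~\ref{theorem:exact:convergence}, but now applied to the double-indexed quasi-error $\mathrm{H}_\ell^k$ along the lexicographic ordering $|\cdot,\cdot|$. The key observation is that $\mathrm{H}_\ell^k$ combines an algebraic error $|\mkern-1.5mu|\mkern-1.5mu| u_\ell^\star - u_\ell^k |\mkern-1.5mu|\mkern-1.5mu|$, which contracts geometrically with factor $q_{\textup{alg}}$ during the inner solver loop, and an estimator $\eta_\ell(u_\ell^k)$, which we must control. So the first step is to derive a \emph{one-step perturbed contraction} of $\mathrm{H}$ with respect to the total step counter: for a solver step within a fixed mesh level, $|\mkern-1.5mu|\mkern-1.5mu| u_\ell^\star - u_\ell^k |\mkern-1.5mu|\mkern-1.5mu| \le q_{\textup{alg}} |\mkern-1.5mu|\mkern-1.5mu| u_\ell^\star - u_\ell^{k-1} |\mkern-1.5mu|\mkern-1.5mu|$ directly from~\eqref{eq:contractive-solver}, and $\eta_\ell(u_\ell^k) \le \eta_\ell(u_\ell^{k-1}) + C_{\textup{stab}} |\mkern-1.5mu|\mkern-1.5mu| u_\ell^k - u_\ell^{k-1} |\mkern-1.5mu|\mkern-1.5mu|$ from stability~\eqref{axiom:stability} (using $\mathcal{T}_\ell \cap \mathcal{T}_\ell = \mathcal{T}_\ell$), while the mesh-change step $(\ell,\underline{k}) \to (\ell+1,0)$ needs the estimator reduction argument from the exact-solver proof — Dörfler marking plus~\eqref{axiom:stability}, \eqref{axiom:reduction} — giving $\eta_{\ell+1}(u_{\ell+1}^0) \le q_\theta \eta_\ell(u_\ell^{\underline{k}}) + C_{\textup{stab}} |\mkern-1.5mu|\mkern-1.5mu| u_{\ell+1}^0 - u_\ell^{\underline{k}} |\mkern-1.5mu|\mkern-1.5mu|$ together with $|\mkern-1.5mu|\mkern-1.5mu| u_{\ell+1}^\star - u_{\ell+1}^0 |\mkern-1.5mu|\mkern-1.5mu| = |\mkern-1.5mu|\mkern-1.5mu| u_{\ell+1}^\star - u_\ell^{\underline{k}} |\mkern-1.5mu|\mkern-1.5mu|$. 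In both cases I want to reorganize these into $a_{n+1} \le q\, a_n + b_n$ with $a_n = \mathrm{H}$ reindexed by $|\ell,k|$ and a suitable $b_n$.

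The main subtlety — and this is where the proof departs from the exact case and from~\cite{ghps2021} — is the choice of the auxiliary sequence $b_n$ and the verification of the quadratic summability condition in Lemma~\ref{lemma:summability:criterion}. The natural candidate is $b_n \simeq |\mkern-1.5mu|\mkern-1.5mu| u_\ell^\star - u_\ell^{k-1} |\mkern-1.5mu|\mkern-1.5mu|$ (an algebraic-error increment, which the termination criterion~\eqref{eq:single:termination} ties to $\lambda\,\eta_\ell(u_\ell^{\underline{k}})$) plus, at mesh-change steps, the Galerkin increment $|\mkern-1.5mu|\mkern-1.5mu| u_{\ell+1}^\star - u_\ell^\star |\mkern-1.5mu|\mkern-1.5mu|$; the latter is exactly the quantity controlled by quasi-orthogonality~\eqref{axiom:orthogonality}. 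The plan is: first reduce everything to the mesh-level quasi-error $\mathrm{H}_\ell^{\underline{k}}$ (or $\mathrm{H}_\ell^0$) by absorbing the inner geometric-solver contraction — since there are only finitely many inner steps and they contract, a standard geometric-series argument bounds any $\mathrm{H}_\ell^k$ by $\mathrm{H}_\ell^0$ plus lower-order terms, and conversely. Then I establish the three hypotheses of Lemma~\ref{lemma:summability:criterion} at the level of the subsequence indexed by $\ell$: the perturbed contraction uses $q_\theta$ and the termination bound; the boundedness $b_{\ell+N} \lesssim a_\ell$ follows as in~\eqref{eq:quasimonotonicity_b} using reliability~\eqref{axiom:reliability} and quasi-monotonicity~\eqref{eq:quasi-monotonicity} together with the termination criterion to pass from $u_\ell^{\underline{k}}$ back to $u_\ell^\star$; and the crucial quadratic summability over $N$ consecutive levels comes from~\eqref{axiom:orthogonality} combined with~\eqref{axiom:reliability}, exactly as in~\eqref{eq:summability_b}, plus the termination criterion to convert algebraic-error contributions into estimator contributions at controllable cost.

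The hard part will be the bookkeeping that glues the inner solver steps to the outer mesh loop so that the final R-linear estimate holds with respect to the \emph{total} counter $|\ell,k|$ rather than just $\ell$. Concretely, I need to show that the algebraic-error term $|\mkern-1.5mu|\mkern-1.5mu| u_\ell^\star - u_\ell^k|\mkern-1.5mu|\mkern-1.5mu|$ does not spoil summability: because the inner loop contracts by $q_{\textup{alg}}<1$ and terminates as soon as $|\mkern-1.5mu|\mkern-1.5mu| u_\ell^k - u_\ell^{k-1}|\mkern-1.5mu|\mkern-1.5mu| \le \lambda\,\eta_\ell(u_\ell^k)$, the number of inner steps is controlled and the sum of inner increments telescopes/geometrically bounds $|\mkern-1.5mu|\mkern-1.5mu| u_\ell^\star - u_\ell^0|\mkern-1.5mu|\mkern-1.5mu|$, which the nested-iteration initialization $u_{\ell}^0 = u_{\ell-1}^{\underline{k}}$ links back to $\mathrm{H}_{\ell-1}^{\underline{k}}$. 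Assembling these bounds into the precise form~\eqref{eq:summability:criterion}, with the right constants depending on $\lambda$, $q_{\textup{alg}}$, $q_\theta$, $C_{\textup{stab}}$, $C_{\textup{rel}}$, $C_{\textup{mon}}$, $C_{\textup{orth}}$, and $\delta$, is the technical core; once that is done, Lemma~\ref{lemma:summability:criterion} delivers~\eqref{eq:single:convergence} immediately, and the statement $\underline{\ell}<\infty \Rightarrow \eta_{\underline{\ell}}(u_{\underline{\ell}}^\star)=0$ follows since non-termination forces $|\mkern-1.5mu|\mkern-1.5mu| u_{\underline{\ell}}^\star - u_{\underline{\ell}}^k|\mkern-1.5mu|\mkern-1.5mu| \to 0$ while~\eqref{eq:single:termination} is never met, forcing $\eta_{\underline{\ell}}(u_{\underline{\ell}}^\star)=0$ in the limit via~\eqref{axiom:stability}.
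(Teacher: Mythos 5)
Your overall architecture does match the paper's: a weighted level-wise quasi-error fed into Lemma~\ref{lemma:summability:criterion} (estimator reduction, reliability~\eqref{axiom:reliability}, quasi-monotonicity~\eqref{eq:quasi-monotonicity}, quasi-orthogonality~\eqref{axiom:orthogonality}), then a gluing step that incorporates the inner solver iterations, and finally Lemma~\ref{lemma:summability} to convert tail summability into R-linear convergence. The genuine gap is in the gluing step. You justify it by claiming that ``the number of inner steps is controlled'' and that a boundedness/geometric-series argument reduces any $\mathrm{H}_\ell^k$ to $\mathrm{H}_\ell^0$ ``plus lower-order terms''. Neither suffices: $\underline{k}[\ell]$ is finite for each $\ell$ but not uniformly bounded \emph{a priori} (no such bound is assumed, and the theorem even allows $\underline{k}[\underline{\ell}]=\infty$), and mere boundedness $\mathrm{H}_\ell^k \lesssim \mathrm{H}_\ell^0$ cannot upgrade tail summability in $\ell$ to tail summability with respect to the total counter $|\ell,k|$: a level with many inner iterations and a non-decaying quasi-error would make the sum over $\mathcal{Q}$ uncontrollable while the level-wise sum stays fine. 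For the same reason your proposed one-step perturbed contraction of $\mathrm{H}$ in the total counter fails within a mesh level: only the algebraic part contracts via~\eqref{eq:contractive-solver}, while stability~\eqref{axiom:stability} leaves the estimator with coefficient $1$, so no uniform $q<1$ is available there.

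The missing ingredient is the \emph{failure} of the stopping criterion~\eqref{eq:single:termination} for the non-final inner steps: for $0 \le k < k' < \underline{k}[\ell]$ one has $\eta_\ell(u_\ell^{k'}) < \lambda^{-1} |\mkern-1.5mu|\mkern-1.5mu| u_\ell^{k'} - u_\ell^{k'-1} |\mkern-1.5mu|\mkern-1.5mu|$, so the full quasi-error at these steps is equivalent (with $\lambda$-dependent constants) to the algebraic error alone, which does contract; together with $\mathrm{H}_\ell^{\underline{k}} \lesssim \mathrm{H}_\ell^{\underline{k}-1}$ (from the satisfied criterion and~\eqref{eq:contractive-solver} at $k=\underline{k}$) this yields the geometric decay $\mathrm{H}_\ell^{k'} \lesssim q_{\textup{alg}}^{k'-k} \, \mathrm{H}_\ell^k$ for all $0 \le k \le k' \le \underline{k}[\ell]$, which is the paper's~\eqref{eq:single:contraction-Lambda}. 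Combined with the nested-iteration bound $\mathrm{H}_{\ell+1}^0 \lesssim \mathrm{H}_\ell^{\underline{k}}$ (which you do have) and the level-wise tail summability~\eqref{eq:single:summability-ell}, a geometric series then sums the inner iterations regardless of how many there are and gives $\sum_{|\ell',k'|>|\ell,k|} \mathrm{H}_{\ell'}^{k'} \lesssim \mathrm{H}_\ell^k$, after which Lemma~\ref{lemma:summability} concludes. In your proposal the stopping criterion is invoked only in the ``satisfied'' direction (at $k=\underline{k}[\ell]$), which controls the final iterate but not the interior of the solver loop; without the ``failed'' direction the passage to the total step counter does not close.
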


\begin{remark}\label{rem:equivalence_Delta_Eta}
	Unlike \cite{ghps2021} (and \cite{ckns2008}),
	Theorem~\ref{theorem:single:convergence} and its proof employ the
	quasi-error \(\mathrm{H}_{\ell}^k\) from~\eqref{eq2:single:quasi-error} instead
	of
	\(
	\Delta_\ell^k
	\coloneqq
	\big[
		|\mkern-1.5mu|\mkern-1.5mu| u^\star - u_\ell^k |\mkern-1.5mu|\mkern-1.5mu|^2 + \gamma \, \eta_\ell(u_\ell^k)^2
		\big]^{1/2}
	\)
	analogous to~\eqref{eq:intro:ckns}. We note that stability~\eqref{axiom:stability}
	and reliability~\eqref{axiom:reliability} yield \(\Delta_\ell^k \lesssim
	\mathrm{H}_{\ell}^k\), while the converse estimate follows from the Céa
	lemma~\eqref{eq:cea}.

\end{remark}

\begin{remark}
	The work~\cite{ghps2021} extends the ideas of~\cite{ckns2008} (that proves~\eqref{eq:intro:ckns} for AFEM with exact solver) and of~\cite{fp2018} (that extends~\eqref{eq:intro:ckns} to the final iterates for AFEM with contractive solver).
	For the scalar product $b(\cdot, \cdot) = a(\cdot,\cdot)$ and arbitrary stopping parameters \(\lambda > 0\), it shows that the quasi-error \(\Delta_\ell^k\) from Remark~\ref{rem:equivalence_Delta_Eta} satisfies contraction
	\begin{subequations}\label{eq:single:contraction}
		\begin{alignat}{2}
			\Delta_\ell^{k}   & \le q_{\textup{ctr}} \, \Delta_\ell^{k-1}
			\quad\quad        &                                                       & \text{for all } (\ell,k) \in \mathcal{Q} \text{ with } 0 <
			k <
			\underline{k}[\ell],
			\\
			\label{eq-b:single:contraction}
			\Delta_{\ell+1}^0 & \le q_{\textup{ctr}} \, \Delta_\ell^{\underline{k}-1}
			\quad             &                                                       & \text{for all } (\ell,\underline{k}) \in \mathcal{Q}
		\end{alignat}
	\end{subequations}
	with contraction constant $0 < q_{\textup{ctr}} < 1$, along the approximations $u_\ell^k \in \mathcal{X}_\ell$ generated by Algorithm~\ref{algorithm:single}.
	The proof of~\eqref{eq:single:contraction} can be generalized similarly to Remark~\ref{rem:Rlinear}, see~\cite{aisfem}:
	With the quasi-Pythagorean estimate~\eqref{eq:general-pythagoras}, the
	contraction~\eqref{eq:single:contraction} transfers to general second-order linear elliptic
	PDEs~\eqref{eq:strongform} under the restriction that~\eqref{eq-b:single:contraction}
	holds only for all $\ell \ge \ell_0$, where
	$\ell_0 \in \mathbb{N}_0$
	exists, but is unknown in practice.
	While, as noted in Remark~\ref{rem:Rlinear}, contraction~\eqref{eq:single:contraction} implies
	full R-linear convergence~\eqref{eq:single:convergence}, the 
	proof of
	Theorem~\ref{theorem:single:convergence} works under much weaker assumptions than that of~\cite{ghps2021} and covers the PDE~\eqref{eq:strongform} with $\ell_0=0$.
\end{remark}

The proof of
Theorem~\ref{theorem:single:convergence} relies on
Lemma~\ref{lemma:summability:criterion} and
the following elementary result essentially taken from~\cite[Lemma~4.9]{cfpp2014}.
Its proof is found in~\ref{appendix}.

\begin{lemma}[tail summability vs.\ R-linear
		convergence]\label{lemma:summability}
	Let $(a_\ell)_{\ell \in \mathbb{N}_0}$ be a scalar sequence in $\mathbb{R}_{\ge0}$ and $m > 0$. Then, the
	following statements are equivalent:
	\begin{itemize}\label{eq:summability}
		\item[\textup{(i)}] \textbf{tail summability:} There exists a constant $C_m >
			      0$ such that
		      \begin{equation}\label{eq:tail-summability}
			      \sum_{\ell' = \ell+1}^\infty a_{\ell'}^m \le C_m \, a_\ell^m
			      \quad \text{for all } \ell \in \mathbb{N}_0.
		      \end{equation}
		\item[\textup{(ii)}] \textbf{R-linear convergence:} There
		      holds~\eqref{eq:Rlinear:convergence} with certain
		      $0 < q_{\textup{lin}} < 1$ and $C_{\textup{lin}} > 0$.
	\end{itemize}
\end{lemma}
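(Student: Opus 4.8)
The plan is to prove the two implications separately. The direction (ii)$\Rightarrow$(i) is essentially immediate, while (i)$\Rightarrow$(ii) rests on one small structural observation about the tail sums. The argument mirrors that of~\cite[Lemma~4.9]{cfpp2014}, and both constants produced will be fully explicit in terms of $C_m$ (resp.\ $C_{\textup{lin}}, q_{\textup{lin}}$) and $m$.

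For (ii)$\Rightarrow$(i), I would start from R-linear convergence~\eqref{eq:Rlinear:convergence}, fix $\ell \in \mathbb{N}_0$, and sum the resulting geometric bound:
\[
	\sum_{\ell' = \ell+1}^\infty a_{\ell'}^m
	= \sum_{n=1}^\infty a_{\ell+n}^m
	\le \sum_{n=1}^\infty \bigl( C_{\textup{lin}} \, q_{\textup{lin}}^n \, a_\ell \bigr)^m
	= \frac{C_{\textup{lin}}^m \, q_{\textup{lin}}^m}{1 - q_{\textup{lin}}^m} \, a_\ell^m ,
\]
which is finite because $0 < q_{\textup{lin}} < 1$ and yields~\eqref{eq:tail-summability} with $C_m \coloneqq C_{\textup{lin}}^m \, q_{\textup{lin}}^m / (1 - q_{\textup{lin}}^m)$. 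This step is routine.

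For (i)$\Rightarrow$(ii), the key idea is to pass to the tail sums $S_\ell \coloneqq \sum_{\ell' = \ell}^\infty a_{\ell'}^m$, which are finite for every $\ell \in \mathbb{N}_0$ since~\eqref{eq:tail-summability} bounds $S_{\ell+1} \le C_m \, a_\ell^m < \infty$ and $S_\ell = a_\ell^m + S_{\ell+1}$. Rewriting tail summability as $S_{\ell+1} \le C_m \, a_\ell^m = C_m \, (S_\ell - S_{\ell+1})$ and solving for $S_{\ell+1}$ gives the contraction
\[
	S_{\ell+1} \le q_0 \, S_\ell
	\quad \text{with } q_0 \coloneqq \frac{C_m}{1 + C_m} \in (0,1)
	\quad \text{for all } \ell \in \mathbb{N}_0 ,
\]
hence $S_{\ell+n} \le q_0^n \, S_\ell$ by induction on $n$. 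Combining $a_{\ell+n}^m \le S_{\ell+n}$ with $S_\ell = a_\ell^m + S_{\ell+1} \le (1 + C_m)\, a_\ell^m$ (again by~\eqref{eq:tail-summability}) yields $a_{\ell+n}^m \le (1+C_m)\, q_0^n \, a_\ell^m$, and taking $m$-th roots establishes~\eqref{eq:Rlinear:convergence} with $q_{\textup{lin}} \coloneqq q_0^{1/m} \in (0,1)$ and $C_{\textup{lin}} \coloneqq (1+C_m)^{1/m} \ge 1$.

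The only point that needs a little care — and the closest thing to an obstacle in an otherwise elementary proof — is the second implication: one must first confirm that the tail sums $S_\ell$ are finite before manipulating the identity $S_{\ell+1} = S_\ell - a_\ell^m$, and keep the index shift straight when extracting the contraction for $(S_\ell)_{\ell \in \mathbb{N}_0}$. Once that contraction is in hand, everything else is algebra plus the passage between $a_\ell^m$ and $a_\ell$.
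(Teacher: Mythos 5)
Your proof is correct and follows essentially the same route as the paper: the geometric series for (ii)$\Rightarrow$(i), and for (i)$\Rightarrow$(ii) the contraction of the tail sums $S_{\ell+1}\le \tfrac{C_m}{1+C_m}S_\ell$ combined with $a_{\ell+n}^m\le S_{\ell+n}$ and $S_\ell\le(1+C_m)a_\ell^m$. The only cosmetic difference is that the paper first reduces to $m=1$ and works with $a_\ell$ directly, whereas you carry $a_\ell^m$ throughout and take $m$-th roots at the end, which yields the same constants.
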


\begin{proof}[\textbf{Proof of Theorem~\ref{theorem:single:convergence}}]
	We want to briefly summarize the proof strategy. First, we show that the estimator reduction together with the contraction~\eqref{eq:single:contraction} of the algebraic solver leads to tail-summability of a weighted quasi-error on the mesh level \(\ell\). Second, we show that the quasi-error from~\eqref{eq2:single:quasi-error} is contractive in the algebraic solver index \(k\) and is stable in the nested iteration. Finally, we combine these two steps to prove R-linear convergence of the quasi-error~\eqref{eq:single:convergence}.

	The proof is split into two steps.

	\textbf{ Step~1 (tail summability with respect to $\boldsymbol{\ell}$).}
	Let $\ell \in \mathbb{N}$ with $(\ell+1,\underline{k}) \in \mathcal{Q}$. Algorithm~\ref{algorithm:single} guarantees nested iteration $u_{\ell+1}^0 = u_\ell^{\underline{k}}$ and $\underline{k}[\ell+1] \ge 1$. This and contraction of the algebraic solver~\eqref{eq:contractive-solver} show
	\begin{equation}\label{eq1:single:convergence}
		|\mkern-1.5mu|\mkern-1.5mu| u_{\ell+1}^\star - u_{\ell+1}^{\underline{k}} |\mkern-1.5mu|\mkern-1.5mu|
		\stackrel{\eqref{eq:contractive-solver}}\le
		q_{\textup{alg}}^{\underline{k}[\ell+1]} \, |\mkern-1.5mu|\mkern-1.5mu| u_{\ell+1}^\star - u_\ell^{\underline{k}} |\mkern-1.5mu|\mkern-1.5mu|
		\le
		q_{\textup{alg}} \, |\mkern-1.5mu|\mkern-1.5mu| u_{\ell+1}^\star - u_\ell^{\underline{k}} |\mkern-1.5mu|\mkern-1.5mu|
	\end{equation}
	As in the proof of Theorem~\ref{theorem:exact:convergence}, one obtains the estimator reduction
	\begin{equation}\label{eq:single:estimator-reduction}
		\eta_{\ell+1}(u_{\ell+1}^{\underline{k}})
		\stackrel{\mathclap{\eqref{eq:estimator-reduction}}}\le
		q_\theta \, \eta_\ell(u_\ell^{\underline{k}}) + C_{\textup{stab}} \, |\mkern-1.5mu|\mkern-1.5mu| u_{\ell+1}^{\underline{k}} \!-\! u_\ell^{\underline{k}} |\mkern-1.5mu|\mkern-1.5mu|
		\stackrel{\mathclap{\eqref{eq1:single:convergence}}}\le
		q_\theta \, \eta_\ell(u_\ell^{\underline{k}}) + (q_{\textup{alg}} + 1)  C_{\textup{stab}} \,
		|\mkern-1.5mu|\mkern-1.5mu| u_{\ell+1}^\star \!-\! u_\ell^{\underline{k}} |\mkern-1.5mu|\mkern-1.5mu|.
	\end{equation}
	Choosing $0 < \gamma \le 1$ with $0 < q_{\textup{ctr}} \coloneqq \max \{ q_{\textup{alg}} + (q_{\textup{alg}}+1) C_{\textup{stab}} \gamma \,,\, q_\theta\} < 1$,
	the combination of~\eqref{eq1:single:convergence}--\eqref{eq:single:estimator-reduction} reads
	\begin{align}\label{eq:a_ell:perturbed-contraction}
		\begin{split}
			 & a_{\ell+1}
			\coloneqq
			|\mkern-1.5mu|\mkern-1.5mu| u_{\ell+1}^\star - u_{\ell+1}^{\underline{k}} |\mkern-1.5mu|\mkern-1.5mu| + \gamma \,
			\eta_{\ell+1}(u_{\ell+1}^{\underline{k}})
			\le
			q_{\textup{ctr}} \, \big[ |\mkern-1.5mu|\mkern-1.5mu| u_{\ell+1}^\star - u_\ell^{\underline{k}} |\mkern-1.5mu|\mkern-1.5mu| + \gamma \,
				\eta_\ell(u_\ell^{\underline{k}}) \big]
			\\& \quad
			\le
			q_{\textup{ctr}} \, \big[ |\mkern-1.5mu|\mkern-1.5mu| u_{\ell}^\star - u_\ell^{\underline{k}} |\mkern-1.5mu|\mkern-1.5mu| + \gamma \,
				\eta_\ell(u_\ell^{\underline{k}}) \big]
			+ q_{\textup{ctr}} \, |\mkern-1.5mu|\mkern-1.5mu| u_{\ell+1}^\star - u_\ell^\star |\mkern-1.5mu|\mkern-1.5mu|
			\eqqcolon
			q_{\textup{ctr}} \, a_\ell + b_\ell.
		\end{split}
	\end{align}
	Moreover, estimate~\eqref{eq:quasimonotonicity_b} from the proof of
	Theorem~\ref{theorem:exact:convergence} and
	stability~\eqref{axiom:stability} prove that
	\begin{equation}\label{eq:cea+reliability}
		|\mkern-1.5mu|\mkern-1.5mu| u_{\ell''}^\star - u_{\ell'}^\star |\mkern-1.5mu|\mkern-1.5mu|
		\stackrel{\mathclap{\eqref{eq:quasimonotonicity_b}}}
		\lesssim
		\eta_\ell(u_\ell^\star)
		\stackrel{\mathclap{\eqref{axiom:stability}}}\lesssim
		|\mkern-1.5mu|\mkern-1.5mu| u_\ell^\star - u_\ell^{\underline{k}} |\mkern-1.5mu|\mkern-1.5mu|
		+ \eta_\ell(u_\ell^{\underline{k}})
		\simeq a_\ell
		\,\, \text{for} \,\, \ell \le \ell' \le \ell'' \le \underline{\ell}
		\,\, \text{with} \,\, (\ell,\underline{k}) \in \mathcal{Q},
	\end{equation}
	which yields $b_{\ell+N} \lesssim a_\ell$ for all $0 \le \ell \le \ell+N \le \underline{\ell}$ with $(\ell,\underline{k}) \in
		\mathcal{Q}$.
	As in~\eqref{eq:summability_b}, we see
	\begin{align}\label{eq:single:orthogonality}
		\begin{split}
			 & \sum_{\ell' = \ell}^{\ell+N} b_{\ell'}^2
			\simeq
			\sum_{\ell' = \ell}^{\ell+N} |\mkern-1.5mu|\mkern-1.5mu| u_{\ell'+1}^\star - u_{\ell'}^\star |\mkern-1.5mu|\mkern-1.5mu|^2
			\stackrel{\eqref{axiom:orthogonality}}\lesssim
			(N+1)^{1-\delta} \, |\mkern-1.5mu|\mkern-1.5mu| u^\star - u_\ell^\star |\mkern-1.5mu|\mkern-1.5mu|^2
			\stackrel{\eqref{axiom:reliability}}\lesssim
			(N+1)^{1-\delta} \, \eta_\ell(u_\ell^\star)^2
			\\& \,
			\stackrel{\eqref{axiom:stability}}\lesssim
			(N+1)^{1-\delta} \, \big[ \eta_\ell(u_\ell^{\underline{k}}) + |\mkern-1.5mu|\mkern-1.5mu| u_\ell^\star
				- u_\ell^{\underline{k}} |\mkern-1.5mu|\mkern-1.5mu| \big]^2
			\simeq (N+1)^{1-\delta} \, a_\ell^2
			\,\,\, \text{for all } 0 \le \ell \le \ell+N < \underline{\ell}.
		\end{split}
	\end{align}
	Hence, the assumptions~\eqref{eq:summability:criterion} are satisfied and
	Lemma~\ref{lemma:summability:criterion} concludes tail summability (or equivalently R-linear convergence by Lemma~\ref{lemma:summability}) of $\mathrm{H}_\ell^{\underline{k}} \simeq a_\ell$, i.e.,
	\begin{equation}\label{eq:single:summability-ell}
		\boxed{\sum_{\ell' = \ell+1}^{\underline{\ell} - 1} \mathrm{H}_{\ell'}^{\underline{k}}
		\lesssim \mathrm{H}_\ell^{\underline{k}}
		\quad \text{for all } 0 \le \ell < \underline{\ell}.}
	\end{equation}

	\textbf{Step~2 (tail summability with respect to $\boldsymbol{\ell}$ and $\boldsymbol{k}$).}
	First, for $0 \le k < k' < \underline{k}[\ell]$, the failure of the termination criterion~\eqref{eq:single:termination} and contraction of the solver~\eqref{eq:contractive-solver} prove that
	\begin{equation*}
		\mathrm{H}_\ell^{k'}
		\stackrel{\eqref{eq:single:termination}}\lesssim
		|\mkern-1.5mu|\mkern-1.5mu| u_\ell^\star - u_\ell^{k'} |\mkern-1.5mu|\mkern-1.5mu| + |\mkern-1.5mu|\mkern-1.5mu| u_\ell^{k'} - u_\ell^{k'-1} |\mkern-1.5mu|\mkern-1.5mu|
		\stackrel{\eqref{eq:contractive-solver}}\lesssim
		|\mkern-1.5mu|\mkern-1.5mu| u_\ell^\star - u_\ell^{k'-1} |\mkern-1.5mu|\mkern-1.5mu|
		\stackrel{\eqref{eq:contractive-solver}}\lesssim
		q_{\textup{alg}}^{k'-k} \, |\mkern-1.5mu|\mkern-1.5mu| u_\ell^\star - u_\ell^{k} |\mkern-1.5mu|\mkern-1.5mu|
		\stackrel{\eqref{eq2:single:quasi-error}}\le
		q_{\textup{alg}}^{k'-k} \, \mathrm{H}_\ell^k.
	\end{equation*}
	Second, for $(\ell,\underline{k}) \in \mathcal{Q}$, it holds that
	\begin{align*}
		\begin{split}
			\mathrm{H}_\ell^{\underline{k}}
			 & \, \stackrel{\mathclap{\eqref{axiom:stability}}}\lesssim \,
			|\mkern-1.5mu|\mkern-1.5mu|u_\ell^\star - u_\ell^{\underline{k}} |\mkern-1.5mu|\mkern-1.5mu| + \eta_\ell(u_\ell^{\underline{k}-1}) +
			|\mkern-1.5mu|\mkern-1.5mu|u_\ell^{\underline{k}} - u_\ell^{\underline{k}-1} |\mkern-1.5mu|\mkern-1.5mu|
			\\&
			\le
			\mathrm{H}_\ell^{\underline{k}-1} + 2 \, |\mkern-1.5mu|\mkern-1.5mu|u_\ell^\star - u_\ell^{\underline{k}} |\mkern-1.5mu|\mkern-1.5mu|
			\stackrel{\eqref{eq:contractive-solver}}\le (1 + 2 \,q_{\textup{alg}}) \, \mathrm{H}_\ell^{\underline{k}-1}
			\quad \text{for all } (\ell,\underline{k}) \in \mathcal{Q}.
		\end{split}
	\end{align*}
	Hence, we may conclude
	\begin{equation}\label{eq:single:contraction-Lambda}
		\boxed{\mathrm{H}_\ell^{k'}
			\lesssim
			q_{\textup{alg}}^{k'-k} \, \mathrm{H}_\ell^k
			\quad \text{for all } 0 \le k \le k' \le \underline{k}[\ell].}
	\end{equation}
	With
	$|\mkern-1.5mu|\mkern-1.5mu| u_{\ell+1}^\star - u_\ell^\star |\mkern-1.5mu|\mkern-1.5mu|
		\lesssim
		a_\ell \simeq \mathrm{H}_\ell^{\underline{k}}$
	from~\eqref{eq:quasimonotonicity_b}, stability~\eqref{axiom:stability} and reduction~\eqref{axiom:reduction} show
	\begin{equation}\label{eq2:single:contraction-Lambda}
		\boxed{\mathrm{H}_{\ell+1}^0 = |\mkern-1.5mu|\mkern-1.5mu| u_{\ell+1}^\star - u_\ell^{\underline{k}} |\mkern-1.5mu|\mkern-1.5mu| +
			\eta_{\ell+1}(u_\ell^{\underline{k}})
			\le \mathrm{H}_\ell^{\underline{k}} + |\mkern-1.5mu|\mkern-1.5mu| u_{\ell+1}^\star - u_\ell^\star |\mkern-1.5mu|\mkern-1.5mu|
			\lesssim \mathrm{H}_\ell^{\underline{k}}
			\quad \text{for all } (\ell,\underline{k}) \in \mathcal{Q}.}
	\end{equation}
	Overall, the geometric series proves tail summability~\eqref{eq:tail-summability} via
	\begin{align*}
		\sum_{\substack{(\ell',k') \in \mathcal{Q} \\ |\ell',k'|> |\ell,k|}} \mathrm{H}_{\ell'}^{k'}
		 & =
		\sum_{k'=k+1}^{\underline{k}[\ell]} \mathrm{H}_\ell^{k'}
		+ \sum_{\ell' = \ell+1}^{\underline{\ell}} \sum_{k'=0}^{\underline{k}[\ell']} \mathrm{H}_{\ell'}^{k'}
		\\[-2mm]&
		\stackrel{\mathclap{\eqref{eq:single:contraction-Lambda}}}\lesssim
		\mathrm{H}_\ell^k + \sum_{\ell' = \ell+1}^{\underline{\ell}} \mathrm{H}_{\ell'}^0
		\stackrel{\eqref{eq2:single:contraction-Lambda}}\lesssim
		\mathrm{H}_\ell^k + \sum_{\ell' = \ell}^{\underline{\ell}-1} \mathrm{H}_{\ell'}^{\underline{k}}
		\stackrel{\eqref{eq:single:summability-ell}}\lesssim
		\mathrm{H}_\ell^k + \mathrm{H}_\ell^{\underline{k}}
		\stackrel{\eqref{eq:single:contraction-Lambda}}\lesssim
		\mathrm{H}_\ell^k
		\quad \text{for all } (\ell,k) \in \mathcal{Q}.
	\end{align*}
	Since $\mathcal{Q}$ is countable and linearly ordered, Lemma~\ref{lemma:summability} concludes the proof of~\eqref{eq:single:convergence}.
\end{proof}

The following comments on the computational cost of implementations of standard finite element methods
underline the importance of full linear convergence~\eqref{eq:single:convergence}.
\begin{itemize}
	\item \textbf{\textit{Solve} \& \textit{Estimate.}} One solver
	      step of an optimal multigrid
	      method can be performed in $\mathcal{O}(\#\mathcal{T}_\ell)$ operations, if smoothing is done according to the grading of the mesh as in~\cite{wz2017, imps2022}. Instead, one step of a
	      multigrid method on $\mathcal{T}_\ell$, where smoothing is done on all levels and all
	      vertex patches needs $\mathcal{O}(\sum_{\ell'=0}^\ell \#\mathcal{T}_{\ell'})$ operations.
	      The same remark is valid for the preconditioned CG method
	      with optimal additive
	      Schwarz or BPX preconditioner from~\cite{cnx2012}. One solver step
	      can be realized via successive updates in \(\mathcal{O}(\#\mathcal{T}_{\ell})\) operations, while
	      \(\mathcal{O}\bigl(\sum_{\ell' = 0}^\ell \#\mathcal{T}_{\ell'}\bigr)\) is faced if the
	      preconditioner does not respect the grading of the mesh hierarchy.
	\item  \textbf{\textit{Mark.}} The D\"orfler marking strategy~\eqref{eq:doerfler} can be realized in linear complexity $\mathcal{O}(\#\mathcal{T}_\ell)$; see~\cite{stevenson2007} for $C_{\textup{mark}} = 2$ and~\cite{pp2020} for $C_{\textup{mark}} = 1$.
	\item  \textbf{\textit{Refine.}} Local mesh refinement (including mesh closure) of
	      $\mathcal{T}_\ell$ by bisection can be realized in $\mathcal{O}(\#\mathcal{T}_\ell)$ operations;
	      see, e.g., \cite{bdd2004, stevenson2007}.
\end{itemize}
Since the adaptive algorithm depends on the full history of
algorithmic decisions, the overall computational cost until step $(\ell,k) \in
	\mathcal{Q}$, i.e., until (and including) the computation of \(u_\ell^k\),
is thus
proportionally bounded by
\begin{equation*}
	\sum_{\substack{(\ell',k') \in \mathcal{Q} \\ |\ell',k'| \le |\ell,k|}} \#\mathcal{T}_{\ell'}
	\le
	\mathtt{cost}(\ell,k)
	\le
	\sum_{\substack{(\ell',k') \in \mathcal{Q} \\ |\ell',k'| \le |\ell,k|}}  \sum_{\ell'' = 0}^{\ell'} \#\mathcal{T}_{\ell''}.
\end{equation*}
Here, the lower bound corresponds to the case that all steps of
Algorithm~\ref{algorithm:single} are done at linear cost $\mathcal{O}(\#\mathcal{T}_\ell)$. The
upper bound corresponds to the case that \textit{solve} \& \textit{estimate},
\textit{mark}, and
\textit{refine} are performed at linear cost $\mathcal{O}(\#\mathcal{T}_\ell)$, while a suboptimal solver
leads to cost $\mathcal{O}(\sum_{\ell'' = 0}^{\ell}
	\#\mathcal{T}_{\ell''})$ for each mesh $\mathcal{T}_\ell$. In any case, the following
corollary shows that full R-linear convergence guarantees that convergence
rates with respect to the number of degrees of freedom $\dim
	\mathcal{X}_\ell \simeq \#\mathcal{T}_\ell$ and with respect to the
overall computational cost $\mathtt{cost}(\ell,k)$ coincide even for a suboptimal solver. Moreover,
the corollary shows that, under R-linear convergence, the optimal convergence rate with respect to the number of degrees of freedom as well as with respect to the overall computational cost is non-zero.

\begin{corollary}[rates = complexity]\label{corollary:rates:complexity}
	For $s > 0$, full R-linear convergence~\eqref{eq:single:convergence} yields
	\begin{equation*}
		M(s)
		\coloneqq
		\sup_{(\ell,k) \in \mathcal{Q}} (\#\mathcal{T}_\ell)^s \, \mathrm{H}_\ell^k
		\le \sup_{(\ell,k) \in \mathcal{Q}} \Big(\sum_{\substack{(\ell',k') \in \mathcal{Q} \\ |\ell',k'| \le |\ell,k|}}  \sum_{\ell'' =
			0}^{\ell'} \#\mathcal{T}_{\ell''}\Big)^s \mathrm{H}_\ell^k
		\le C_{\textup{cost}}(s) \, M(s),
	\end{equation*}
	where the constant $C_{\textup{cost}}(s) > 0$ depends only on $C_{\textup{lin}}$,
	$q_{\textup{lin}}$, and $s$.
	Moreover, there exists $s_0 > 0$ such that $M(s) < \infty$ for all $0 < s \le s_0$ with
	\(s_0
	= \infty\) if \(\underline{\ell} < \infty\).
\end{corollary}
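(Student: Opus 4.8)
The plan is to run the abstract ``rates $=$ complexity'' argument of~\cite{cfpp2014,ghps2021}, but fed only with full R-linear convergence~\eqref{eq:single:convergence}. The first inequality is immediate, since for any $(\ell,k)\in\mathcal{Q}$ the summand with $(\ell',k')=(\ell,k)$ already contributes $\sum_{\ell''=0}^{\ell}\#\mathcal{T}_{\ell''}\ge\#\mathcal{T}_\ell$. For the second inequality I would fix $(\ell,k)\in\mathcal{Q}$, assume $\mathrm{H}_\ell^k>0$ (otherwise the bound is trivial), and first reorganize the cost sum $\mathcal{C}(\ell,k):=\sum_{|\ell',k'|\le|\ell,k|}\sum_{\ell''=0}^{\ell'}\#\mathcal{T}_{\ell''}$ by interchanging the order of summation: the mesh size $\#\mathcal{T}_m$ ($0\le m\le\ell$) appears precisely with multiplicity $T_m:=|\ell,k|-|m,0|+1=\#\{(\ell',k')\in\mathcal{Q}\colon(m,0)\le(\ell',k')\le(\ell,k)\}$, so that $\mathcal{C}(\ell,k)=\sum_{m=0}^{\ell}T_m\,\#\mathcal{T}_m$.

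The crucial observation is then that $T_m-T_{m+1}=\#\{k'\colon(m,k')\in\mathcal{Q}\}=\underline{k}[m]+1\ge1$, so that $T_0>T_1>\dots>T_\ell\ge1$ are pairwise distinct positive integers. Next I would apply~\eqref{eq:single:convergence} to the comparable pair $(m,0)\le(\ell,k)$ to get $\mathrm{H}_m^0\ge C_{\textup{lin}}^{-1}q_{\textup{lin}}^{-(T_m-1)}\mathrm{H}_\ell^k$, whence $(\#\mathcal{T}_m)^s\le M(s)/\mathrm{H}_m^0\le C_{\textup{lin}}\,q_{\textup{lin}}^{T_m-1}M(s)/\mathrm{H}_\ell^k$ and therefore $\#\mathcal{T}_m\le(C_{\textup{lin}}M(s)/\mathrm{H}_\ell^k)^{1/s}q_{\textup{lin}}^{(T_m-1)/s}$. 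Summing over $m$ and using that the $T_m$ are distinct, the geometric factor absorbs the polynomial weight: $\sum_{m=0}^{\ell}T_m\,q_{\textup{lin}}^{(T_m-1)/s}\le\sum_{t=1}^{\infty}t\,q_{\textup{lin}}^{(t-1)/s}=:C(q_{\textup{lin}},s)<\infty$, since $q_{\textup{lin}}^{1/s}<1$. This gives $\mathcal{C}(\ell,k)\le(C_{\textup{lin}}M(s)/\mathrm{H}_\ell^k)^{1/s}C(q_{\textup{lin}},s)$, i.e.\ $\mathcal{C}(\ell,k)^s\,\mathrm{H}_\ell^k\le C_{\textup{lin}}\,C(q_{\textup{lin}},s)^s\,M(s)$; taking the supremum over $\mathcal{Q}$ closes the sandwich with $C_{\textup{cost}}(s)=C_{\textup{lin}}\,C(q_{\textup{lin}},s)^s$, which by construction depends only on $C_{\textup{lin}}$, $q_{\textup{lin}}$, and $s$.

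For the final assertion I would distinguish two cases. If $\underline{\ell}<\infty$, then only the finitely many meshes $\mathcal{T}_0,\dots,\mathcal{T}_{\underline{\ell}}$ occur, so $\#\mathcal{T}_\ell\le\#\mathcal{T}_{\underline{\ell}}$, while~\eqref{eq:single:convergence} yields $\mathrm{H}_\ell^k\lesssim\mathrm{H}_0^0$; hence $M(s)\le(\#\mathcal{T}_{\underline{\ell}})^s\sup_{(\ell,k)\in\mathcal{Q}}\mathrm{H}_\ell^k<\infty$ for every $s>0$, i.e.\ $s_0=\infty$. If $\underline{\ell}=\infty$, I would invoke the son estimate for newest-vertex bisection~\cite{stevenson2008,dgs2023}, $\#\mathcal{T}_{\ell+1}\le C_{\textup{son}}\,\#\mathcal{T}_\ell$ with some $C_{\textup{son}}\ge1$, to obtain $\#\mathcal{T}_\ell\le C_{\textup{son}}^\ell\,\#\mathcal{T}_0\le C_{\textup{son}}^{|\ell,k|}\,\#\mathcal{T}_0$ (using $\ell<|\ell,k|$); together with $\mathrm{H}_\ell^k\lesssim q_{\textup{lin}}^{|\ell,k|}$ from~\eqref{eq:single:convergence} this gives $(\#\mathcal{T}_\ell)^s\,\mathrm{H}_\ell^k\lesssim\bigl(C_{\textup{son}}^s\,q_{\textup{lin}}\bigr)^{|\ell,k|}$, which stays bounded as soon as $C_{\textup{son}}^s\,q_{\textup{lin}}\le1$, i.e.\ for all $0<s\le s_0:=\ln(1/q_{\textup{lin}})/\ln C_{\textup{son}}$ (and $s_0=\infty$ if $C_{\textup{son}}=1$).

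I expect the one genuinely non-routine point to be the combinatorial reorganization of $\mathcal{C}(\ell,k)$ together with the observation that the multiplicities $T_m$ are pairwise distinct integers: this is precisely what lets R-linear convergence measured in the lexicographic step counter $|\cdot,\cdot|$ dominate the polynomial weight $T_m$, even though a suboptimal solver can inflate the per-mesh work up to $\mathcal{O}\bigl(\sum_{\ell''\le\ell}\#\mathcal{T}_{\ell''}\bigr)$. The remaining ingredients are geometric-series bookkeeping and the standard NVB son estimate.
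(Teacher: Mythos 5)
Your argument is correct, and it takes a genuinely different (more hands-on) route than the paper. The paper proves the corollary by factoring everything through the abstract sequence criterion of Lemma~\ref{lemma:complexity:sequence}, applied with $a_{|\ell,k|}\coloneqq\mathrm{H}_\ell^k$ and $t_{|\ell,k|}\coloneqq\#\mathcal{T}_\ell$: there the double cost sum is estimated by inserting $t_{n}\le M(s)^{1/s}a_{n}^{-1/s}$ and applying the geometric series twice along the step counter, and the finiteness of $M(s)$ for small $s$ comes from $t_{n+1}\le C_2\,t_n$ together with $a_n\lesssim q_{\textup{lin}}^n$, i.e.\ exactly the NVB son estimate you invoke explicitly (the paper leaves the verification of $t_{n+1}\le C_2 t_n$ implicit). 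You instead reorganize the concrete cost sum by counting how often each mesh $\mathcal{T}_m$ occurs, observe that the multiplicities $T_m=|\ell,k|-|m,0|+1$ are pairwise distinct positive integers (because each level contributes at least one solver step), and then let full R-linear convergence~\eqref{eq:single:convergence} between $(m,0)$ and $(\ell,k)$ absorb the weight through $\sum_{t\ge1}t\,q_{\textup{lin}}^{(t-1)/s}<\infty$; this yields the same dependence of $C_{\textup{cost}}(s)$ on $C_{\textup{lin}}$, $q_{\textup{lin}}$, $s$. What the paper's abstraction buys is reusability (the same lemma serves the analogous corollary after Theorem~\ref{theorem:double:convergence}) and the fact that its double sum even dominates the corollary's middle term, whose inner sum runs only over mesh levels; what your version buys is that it addresses the stated quantity directly, makes the combinatorial mechanism transparent, and treats the case $\underline{\ell}<\infty$ (giving $s_0=\infty$) explicitly, which the cited lemma alone does not deliver. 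Only cosmetic caveats remain: one should note that the second inequality is trivial if $M(s)=\infty$, and that $\mathrm{H}_\ell^k>0$ together with~\eqref{eq:single:convergence} forces $\mathrm{H}_m^0>0$ before dividing by it; also your identity $T_m-T_{m+1}=\underline{k}[m]+1$ counts the pairs $(m,1),\dots,(m,\underline{k}[m]),(m+1,0)$ rather than literally the set $\{k'\colon(m,k')\in\mathcal{Q}\}$, but the cardinality, and hence the strict decrease, is as you claim.
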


The last corollary is an immediate consequence of the following elementary lemma for
\(a_{| \ell, k |} \coloneqq \mathrm{H}_\ell^k\) and \(t_{| \ell, k | } \coloneqq \# \mathcal{T}_\ell\).

\begin{lemma}[rates = complexity criterion]\label{lemma:complexity:sequence}
	Let \((a_\ell)_{\ell \in \mathbb{N}_0}\) and \((t_\ell)_{\ell \in \mathbb{N}_0}\) be sequences in \(\mathbb{R}_{\ge 0}\) such that
	\begin{equation}\label{eq:Rlinear:sequence}
		a_{\ell+n}
		\le
		C_1 q^n \,a_\ell
		\quad
		\text{and}
		\quad
		t_{\ell+1} \le C_2 \, t_{\ell}
		\quad
		\text{for all \(\ell, n \in \mathbb{N}_0\)}.
	\end{equation}
	Then, for all \(s > 0\), there holds
	\begin{equation}\label{eq:single:complexity_criterion}
		M(s)
		\coloneqq
		\sup_{\ell \in \mathbb{N}_0} t_\ell^s \, a_\ell
		\le
		\sup_{\ell \in \mathbb{N}_0} \Big(\sum_{\ell' = 0}^\ell  \sum_{\ell'' =
			0}^{\ell'} t_{\ell''}\Big)^s a_\ell
		\le C_{\textup{cost}}(s) \, M(s),
	\end{equation}
	where the constant $C_{\textup{cost}}(s) > 0$ depends only on $C_1$,
	$q$, and $s$.
	Moreover, there exists $s_0 > 0$ depending only on \(C_2\) and \(q\) such that $M(s) <
		\infty$ for all $0 < s \le s_0$.
\end{lemma}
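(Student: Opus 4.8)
The plan is to establish the two inequalities in~\eqref{eq:single:complexity_criterion} separately and then show finiteness of $M(s)$ for small $s>0$. The \emph{lower bound} is immediate: since every $t_\ell\ge 0$, the $\ell'=\ell$ summand shows $t_\ell\le\sum_{\ell''=0}^{\ell}t_{\ell''}\le\sum_{\ell'=0}^{\ell}\sum_{\ell''=0}^{\ell'}t_{\ell''}$, and because $x\mapsto x^s$ is nondecreasing on $\mathbb{R}_{\ge0}$, multiplying by $a_\ell$ and taking $\sup_{\ell\in\mathbb{N}_0}$ gives $M(s)\le\sup_\ell\big(\sum_{\ell'=0}^\ell\sum_{\ell''=0}^{\ell'}t_{\ell''}\big)^s a_\ell$.

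For the \emph{upper bound} we may assume $M(s)<\infty$, as otherwise the claim is trivial. Fix $\ell$; if $a_\ell=0$ the corresponding term vanishes, so assume $a_\ell>0$, which by the R-linear bound in~\eqref{eq:Rlinear:sequence} forces $a_{\ell'}\ge(C_1q^{\ell-\ell'})^{-1}a_\ell>0$ for all $0\le\ell'\le\ell$. The definition of $M(s)$ then yields $t_{\ell''}^s\le M(s)/a_{\ell''}$, hence $t_{\ell''}\le M(s)^{1/s}a_{\ell''}^{-1/s}\le M(s)^{1/s}C_1^{1/s}q^{(\ell-\ell'')/s}a_\ell^{-1/s}$ for every $\ell''\le\ell$. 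Reindexing the nested sum as $\sum_{\ell'=0}^\ell\sum_{\ell''=0}^{\ell'}t_{\ell''}=\sum_{\ell''=0}^\ell(\ell-\ell''+1)\,t_{\ell''}$ and inserting this, with $p\coloneqq q^{1/s}\in(0,1)$, gives
\[
\sum_{\ell'=0}^\ell\sum_{\ell''=0}^{\ell'}t_{\ell''}
\le M(s)^{1/s}C_1^{1/s}a_\ell^{-1/s}\sum_{j=0}^\infty(j+1)\,p^{\,j}
=\frac{C_1^{1/s}}{(1-q^{1/s})^2}\,M(s)^{1/s}\,a_\ell^{-1/s},
\]
where the geometric factor absorbs the linear weight $(\ell-\ell''+1)$ uniformly in $\ell$. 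Raising to the power $s$ and multiplying by $a_\ell$ yields $\big(\sum_{\ell'=0}^\ell\sum_{\ell''=0}^{\ell'}t_{\ell''}\big)^s a_\ell\le C_{\textup{cost}}(s)\,M(s)$ with $C_{\textup{cost}}(s)\coloneqq C_1/(1-q^{1/s})^{2s}$, which depends only on $C_1$, $q$, and $s$; the supremum over $\ell$ (the indices with $a_\ell=0$ satisfying the bound trivially) gives the second inequality.

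For \emph{finiteness}, iterating~\eqref{eq:Rlinear:sequence} gives $a_\ell\le C_1q^\ell a_0$ and $t_\ell\le C_2^\ell t_0$, where we may assume $C_2\ge1$. Thus $t_\ell^s a_\ell\le C_1 t_0^s a_0\,(C_2^s q)^\ell$. If $C_2=1$ then $C_2^s q<1$ for every $s>0$; if $C_2>1$ put $s_0\coloneqq\ln(1/q)/\ln C_2>0$, so that $C_2^s q\le1$ for $0<s\le s_0$. In both cases $\sup_\ell(C_2^s q)^\ell=1$, whence $M(s)\le C_1 t_0^s a_0<\infty$ for all $0<s\le s_0$, with $s_0$ depending only on $C_2$ and $q$.

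The proof is mostly bookkeeping, and the only genuine point requiring care is that $C_{\textup{cost}}(s)$ must be independent of the particular sequences $(a_\ell)$, $(t_\ell)$. This is precisely why one first extracts the pointwise bound $t_{\ell''}\lesssim M(s)^{1/s}a_{\ell''}^{-1/s}$ from the definition of $M(s)$ and then trades $a_{\ell''}$ for the ``current'' value $a_\ell$ using R-linear convergence: the emerging geometric factors $q^{(\ell-\ell'')/s}$ are what neutralize the at-most-linear multiplicity produced by the cumulative double sum. Handling the degenerate indices with $a_\ell=0$ (which make all later $a_m$ vanish) is routine and is done separately.
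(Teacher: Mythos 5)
Your proof is correct and takes essentially the same route as the paper: the pointwise bound $t_{\ell''}\le M(s)^{1/s}a_{\ell''}^{-1/s}$ extracted from the definition of $M(s)$, traded against $a_\ell$ via R-linear convergence and summed geometrically, followed by the same $C_2^s q\le 1$ argument (with $s_0=\log(1/q)/\log C_2$) for finiteness. The only cosmetic differences are that you collapse the double sum into a single weighted sum $\sum_{j\ge0}(j+1)q^{j/s}=(1-q^{1/s})^{-2}$ instead of applying the geometric series twice (which even improves the constant from $C_1^2$ to $C_1$ in $C_{\textup{cost}}(s)$), and that you explicitly handle the degenerate cases $a_\ell=0$ and $C_2\le1$, which the paper leaves implicit.
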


\begin{proof}
	By definition, it holds that
	\begin{equation*}
		t_\ell \le M(s)^{1/s} \, a_\ell^{-1/s}
		\quad \text{for all } \ell \in \mathbb{N}_0.
	\end{equation*}
	This, assumption~\eqref{eq:Rlinear:sequence}, and the geometric series
	prove that
	\begin{align*}
		\sum_{\ell'' = 0}^{\ell'} t_{\ell''}
		\le
		M(s)^{1/s} \, \sum_{\ell'' = 0}^{\ell'} a_{\ell''}^{-1/s}
		 & \stackrel{\mathclap{\eqref{eq:Rlinear:sequence}}}\le
		M(s)^{1/s} \, C_1^{1/s} a_{\ell'}^{-1/s} \, \sum_{\ell'' = 0}^{\ell'} (q^{1/s})^{\ell'- \ell''}
		\\&
		\le
		M(s)^{1/s} \, \frac{C_1^{1/s}}{1-q^{1/s}} \,
		a_{\ell'}^{-1/s}
		\quad \text{for all } \ell' \in \mathbb{N}_0.
	\end{align*}
	A further application of~\eqref{eq:Rlinear:sequence} and the geometric series prove that
	\begin{equation*}
		\sum_{\ell' = 0}^\ell a_{\ell'}^{-1/s}
		\stackrel{\eqref{eq:Rlinear:sequence}}\le
		C_1^{1/s} a_\ell^{-1/s} \, \sum_{\ell' = 0}^\ell (q^{1/s})^{\ell - \ell'}
		\le \frac{C_1^{1/s}}{1-q^{1/s}} \, a_\ell^{-1/s}
		\quad \text{for all } \ell \in \mathbb{N}_0.
	\end{equation*}
	The combination of the two previously displayed formulas results in
	\begin{equation*}
		\sum_{\ell' = 0}^\ell  \sum_{\ell'' = 0}^{\ell'} t_{\ell''}
		\le
		\Bigl(\frac{C_1^{1/s}}{1-q^{1/s}}\Bigr)^2 \,
		M(s)^{1/s} \, a_\ell^{-1/s}
		\quad \text{for all } \ell \in \mathbb{N}_0.
	\end{equation*}
	Rearranging this estimate, we conclude the proof of~\eqref{eq:single:complexity_criterion}.
	It remains to verify $M(s) < \infty$ for some $s > 0$.
	Note that \eqref{eq:Rlinear:sequence} guarantees that
	\begin{equation*}
		0 \le t_{\ell} \le C_2 \, t_{\ell-1} \le C_2^\ell \, t_0
		\quad \text{for all } \ell \in \mathbb{N}.
	\end{equation*}
	Moreover, R-linear convergence~\eqref{eq:Rlinear:sequence} yields that
	\begin{equation*}
		0 \le  a_\ell
		\stackrel{\eqref{eq:Rlinear:sequence}}\le
		C_1 q^{\ell} \, a_0
		\quad \text{for all } \ell \in \mathbb{N}_0.
	\end{equation*}
	Multiplying the two previously displayed formulas, we see that
	\begin{equation*}
		t_\ell^s \, a_\ell
		\le
		(C_2^s q)^\ell C_1 \, t_0^s \,a_0
		\quad \text{for all } \ell \in \mathbb{N}_0.
	\end{equation*}
	Note that the right-hand side is uniformly bounded, provided that $s > 0$ guarantees
	$C_2^s q \le 1$. This concludes the proof with $s_0 \coloneqq \log(1/q) /
		\log(C_2)$.
\end{proof}

With full linear convergence~\eqref{eq:single:convergence}, the following theorem from~\cite[Theorem~8]{ghps2021} can be applied and thus Algorithm~\ref{algorithm:single} guarantees optimal convergence rates
with respect to the overall computational cost in the case of sufficiently
small adaptivity parameters $\theta$ and $\lambda$.
To formalize the notion of \emph{achievable convergence rates} $s>0$, we introduce
nonlinear approximation classes from~\cite{bdd2004, stevenson2007, ckns2008,cfpp2014}
\begin{equation*}
	\| u^\star \|_{\mathbb{A}_s}
	\coloneqq
	\sup_{N \in \mathbb{N}_0}
	\Bigl(
	\bigl( N+1 \bigr)^s
	\min_{\mathcal{T}_{\textup{opt}} \in \mathbb{T}_N } \eta_{\textup{opt}}(u^\star_{\textup{opt}})
	\Bigr),
\end{equation*}
where $\eta_{\textup{opt}}(u^\star_{\textup{opt}})$ is the estimator for
the (unavailable) exact Galerkin solution \(u_{\textup{opt}}^\star\) on
an optimal $\mathcal{T}_{\textup{opt}} \in \mathbb{T}_N \coloneqq \{\mathcal{T}_H \in \mathbb{T} \colon \# \mathcal{T}_H - \# \mathcal{T}_0 \le N\}$.
\begin{theorem}[optimal complexity of Algorithm~\ref{algorithm:single}, {\cite[Theorem~8]{ghps2021}}]\label{theorem:single:optimal}
	Suppose that the estimator satisfies the axioms of
	adaptivity~\eqref{axiom:stability}, \eqref{axiom:reduction}, \eqref{axiom:discrete_reliability},
	and suppose that quasi-orthogonality~\eqref{axiom:orthogonality} holds.
	Suppose that the parameters \( \theta \) and \(\lambda\) are chosen such that
	\begin{equation*}
		0 < \lambda < \lambda^\star = \min \Bigl\{ 1, \frac{1 - q_{\textup{alg}}}{q_{\textup{alg}}} \, C_{\textup{stab}}^{-1} \Bigr\}
		\quad \text{and} \
		0 < \frac{(\theta^{1/2} + \lambda / \lambda^\star)^2}{\bigl(1-\lambda / \lambda^\star\bigr)^2}
		<
		\theta^\star
		\coloneqq
		\bigl( 1 + C_{\textup{stab}}^2 \, C_{\textup{drel}}^2 \bigr)^{-1}.
	\end{equation*}
	Then, Algorithm~\ref{algorithm:single} guarantees for all \( s > 0 \) that
	\begin{equation*}
		c_{\textup{opt}} \| u^\star \|_{\mathbb{A}_s}
		\le
		\sup_{(\ell,k) \in \mathcal{Q}} \Bigl( \sum \limits_{\substack{ (\ell',k') \in \mathcal{Q} \\ | \ell',k' | \le | \ell,k | }} \# \mathcal{T}_{\ell'} \Bigr)^s \, \mathrm{H}_{\ell}^{k}
		\le
		C_{\textup{opt}} \, \max \{ \| u^\star \|_{\mathbb{A}_s}, \mathrm{H}_0^0 \}.
	\end{equation*}
	The constant \(c_{\textup{opt}} > 0\) depends only on~\(C_{\textup{stab}}\), the use of NVB refinement, and \(s\), while \( C_{\textup{opt}} > 0 \) depends only on~\(C_{\textup{stab}}\), \(q_{\textup{red}}\), \(C_{\textup{drel}}\), \(C_{\textup{lin}}\), \(q_{\textup{lin}}\), \(\# \mathcal{T}_0\), \(\lambda\), \(q_{\textup{alg}}\), \(\theta\), \(s\), and the use of NVB refinement. \qed
\end{theorem}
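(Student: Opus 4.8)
The plan is to combine the full R-linear convergence of Theorem~\ref{theorem:single:convergence} with the by-now-standard optimality analysis of Dörfler marking in the spirit of~\cite{stevenson2007,ckns2008,cfpp2014}, adapted to absorb the algebraic perturbation. The statement splits into three parts: (a) the lower bound $c_{\textup{opt}}\,\|u^\star\|_{\mathbb{A}_s}\lesssim\sup_{(\ell,k)\in\mathcal{Q}}\big(\sum_{|\ell',k'|\le|\ell,k|}\#\mathcal{T}_{\ell'}\big)^s\,\mathrm{H}_\ell^k$; (b) the upper bound in terms of the number of elements, $\sup_{\ell}(\#\mathcal{T}_\ell)^s\,\mathrm{H}_\ell^{\underline{k}}\lesssim\max\{\|u^\star\|_{\mathbb{A}_s},\mathrm{H}_0^0\}$; and (c) the upgrade of (b) to the cumulative cost. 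A preliminary observation used throughout is that the stopping criterion~\eqref{eq:single:termination} together with stability~\eqref{axiom:stability} yields $\eta_\ell(u_\ell^\star)\simeq\eta_\ell(u_\ell^{\underline{k}})\simeq\mathrm{H}_\ell^{\underline{k}}$ with constants depending only on $C_{\textup{stab}}$ and $\lambda<\lambda^\star$, so estimator quantities and the quasi-error on stopped iterates are interchangeable. Part (a) is then essentially routine: every generated mesh satisfies $\mathcal{T}_\ell\in\mathbb{T}_{\#\mathcal{T}_\ell-\#\mathcal{T}_0}$, so for any $N\in\mathbb{N}_0$ the largest index $\ell(N)$ with $\#\mathcal{T}_{\ell(N)}-\#\mathcal{T}_0\le N$ gives $\min_{\mathcal{T}_{\textup{opt}}\in\mathbb{T}_N}\eta_{\textup{opt}}(u^\star_{\textup{opt}})\le\eta_{\ell(N)}(u^\star_{\ell(N)})\lesssim\mathrm{H}_{\ell(N)}^{\underline{k}}$; the one-step NVB growth bound $\#\mathcal{T}_{\ell+1}\le C_{\textup{son}}\,\#\mathcal{T}_\ell$ ensures $N+1\lesssim\#\mathcal{T}_{\ell(N)}\le\sum_{|\ell',k'|\le|\ell(N),\underline{k}|}\#\mathcal{T}_{\ell'}$, and taking the supremum over $N$ proves (a) (the degenerate case $\underline{\ell}<\infty$, where $\eta_{\underline{\ell}}(u^\star_{\underline{\ell}})=0$, contributes nothing).

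For the upper bound (b), the core ingredient is the optimality lemma for Dörfler marking. Fix a level $\ell$ with $(\ell,\underline{k})\in\mathcal{Q}$. By definition of $\|u^\star\|_{\mathbb{A}_s}$ and quasi-monotonicity~\eqref{eq:quasi-monotonicity}, there is a refinement $\mathcal{T}_{\textup{opt}}\in\mathbb{T}$ with $\#\mathcal{T}_{\textup{opt}}-\#\mathcal{T}_0\lesssim\|u^\star\|_{\mathbb{A}_s}^{1/s}\,(\mathrm{H}_\ell^{\underline{k}})^{-1/s}$ and $\eta_{\textup{opt}}(u^\star_{\textup{opt}})$ below a prescribed fraction of $\eta_\ell(u^\star_\ell)$. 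Passing to the overlay $\widehat{\mathcal{T}}:=\mathcal{T}_\ell\oplus\mathcal{T}_{\textup{opt}}\in\mathbb{T}(\mathcal{T}_\ell)$ — for which the NVB overlay estimate gives $\#\widehat{\mathcal{T}}-\#\mathcal{T}_\ell\le\#\mathcal{T}_{\textup{opt}}-\#\mathcal{T}_0$ — stability~\eqref{axiom:stability}, quasi-monotonicity~\eqref{eq:quasi-monotonicity}, and discrete reliability~\eqref{axiom:discrete_reliability} show that the set $\mathcal{R}:=\mathcal{T}_\ell\setminus\widehat{\mathcal{T}}$ satisfies the Dörfler criterion $\theta\,\eta_\ell(u^\star_\ell)^2\le\eta_\ell(\mathcal{R},u^\star_\ell)^2$. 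Since Algorithm~\ref{algorithm:single} marks with respect to $u_\ell^{\underline{k}}$, the perturbation $|\mkern-1.5mu|\mkern-1.5mu| u^\star_\ell-u_\ell^{\underline{k}}|\mkern-1.5mu|\mkern-1.5mu|\le\lambda\,\eta_\ell(u_\ell^{\underline{k}})$ enters at this point, and precisely the parameter constraint $(\theta^{1/2}+\lambda/\lambda^\star)^2/(1-\lambda/\lambda^\star)^2<\theta^\star=(1+C_{\textup{stab}}^2C_{\textup{drel}}^2)^{-1}$ guarantees that $\mathcal{R}$ is also Dörfler-admissible for $u_\ell^{\underline{k}}$. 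The (quasi-)minimality of $\mathcal{M}_\ell$ in~\eqref{eq:doerfler} then yields $\#\mathcal{M}_\ell\le C_{\textup{mark}}\,\#\mathcal{R}\le C_{\textup{mark}}(\#\mathcal{T}_{\textup{opt}}-\#\mathcal{T}_0)\lesssim\|u^\star\|_{\mathbb{A}_s}^{1/s}\,(\mathrm{H}_\ell^{\underline{k}})^{-1/s}$.

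Summation and the cost upgrade then proceed mechanically. The NVB mesh-closure estimate $\#\mathcal{T}_\ell-\#\mathcal{T}_0\le C_{\textup{mesh}}\sum_{j=0}^{\ell-1}\#\mathcal{M}_j$ combined with the previous step gives $\#\mathcal{T}_\ell-\#\mathcal{T}_0\lesssim\|u^\star\|_{\mathbb{A}_s}^{1/s}\sum_{j=0}^{\ell-1}(\mathrm{H}_j^{\underline{k}})^{-1/s}$, and here full R-linear convergence~\eqref{eq:single:convergence} enters decisively: since $|\ell,\underline{k}|-|j,\underline{k}|\ge\ell-j$, it yields $\mathrm{H}_\ell^{\underline{k}}\le C_{\textup{lin}}\,q_{\textup{lin}}^{\ell-j}\,\mathrm{H}_j^{\underline{k}}$, hence $(\mathrm{H}_j^{\underline{k}})^{-1/s}\le C_{\textup{lin}}^{1/s}\,q_{\textup{lin}}^{(\ell-j)/s}\,(\mathrm{H}_\ell^{\underline{k}})^{-1/s}$, and the geometric series sums to $\#\mathcal{T}_\ell-\#\mathcal{T}_0\lesssim\|u^\star\|_{\mathbb{A}_s}^{1/s}\,(\mathrm{H}_\ell^{\underline{k}})^{-1/s}$, i.e.\ $(\#\mathcal{T}_\ell)^s\,\mathrm{H}_\ell^{\underline{k}}\lesssim\max\{\|u^\star\|_{\mathbb{A}_s},(\#\mathcal{T}_0)^s\mathrm{H}_0^0\}$; extending from stopped indices to arbitrary $(\ell,k)\in\mathcal{Q}$ via the $k$-contraction~\eqref{eq:single:contraction-Lambda}, the nested-iteration estimate~\eqref{eq2:single:contraction-Lambda}, and $\#\mathcal{T}_\ell\le C_{\textup{son}}\#\mathcal{T}_{\ell-1}$ proves (b). Finally, for (c), the same geometric-series mechanism applied along the linearly ordered set $\mathcal{Q}$ — exactly as in Lemma~\ref{lemma:complexity:sequence} and Corollary~\ref{corollary:rates:complexity}, using $\#\mathcal{T}_{\ell'}\le M(s)^{1/s}(\mathrm{H}_{\ell'}^{k'})^{-1/s}$ and $(\mathrm{H}_{\ell'}^{k'})^{-1/s}\lesssim q_{\textup{lin}}^{(|\ell,k|-|\ell',k'|)/s}(\mathrm{H}_\ell^k)^{-1/s}$ — upgrades $M(s)=\sup_\ell(\#\mathcal{T}_\ell)^s\mathrm{H}_\ell^{\underline{k}}$ to the supremum over the cumulative cost $\sum_{|\ell',k'|\le|\ell,k|}\#\mathcal{T}_{\ell'}$, at the price of an $s$-dependent constant, completing the upper bound.

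The main obstacle is the optimality lemma of the second paragraph: propagating the algebraic error through the discrete-reliability/Dörfler comparison while keeping every constant independent of $\ell$ and of $\mathcal{T}_0$. This is where the sharp thresholds $\lambda^\star$ and $\theta^\star$ originate and where the delicate balance of $\theta$, $\lambda$, $C_{\textup{stab}}$, $C_{\textup{drel}}$, and $q_{\textup{alg}}$ must be carried out explicitly; by contrast, the summation and the cost upgrade are purely computational once full R-linear convergence is available. A secondary point needing care is the bookkeeping for the degenerate regime $\underline{\ell}<\infty$ and for small indices, together with checking that — thanks to Theorem~\ref{theorem:single:convergence} holding with $\ell_0=0$ and $\mathcal{T}_0$-independent constants — no hidden dependence on the initial mesh or the exact solution enters $C_{\textup{opt}}$ beyond the explicit $\#\mathcal{T}_0$, which matches the dependencies asserted in the statement.
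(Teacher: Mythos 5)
The paper does not prove this theorem itself: it is quoted from \cite[Theorem~8]{ghps2021}, with full R-linear convergence (Theorem~\ref{theorem:single:convergence}) supplied as the only new ingredient. Your sketch reconstructs precisely the strategy behind that cited proof --- the lower bound via stability~\eqref{axiom:stability} (note that for this direction only $\eta_\ell(u_\ell^\star)\lesssim \mathrm{H}_\ell^k$ is needed, which is why $c_{\textup{opt}}$ depends solely on $C_{\textup{stab}}$, NVB, and $s$), the overlay/discrete-reliability comparison with the $\lambda$-perturbation absorbed by the thresholds $\lambda^\star$ and $\theta^\star$, mesh closure plus the geometric series from~\eqref{eq:single:convergence}, and the cost upgrade exactly as in Corollary~\ref{corollary:rates:complexity} --- so it is essentially the same approach, with only the quantitative verification of the perturbed D\"orfler comparison (which you explicitly defer) left to the cited reference.
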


\begin{remark}
	Considering the nonsymmetric model problem~\eqref{eq:strongform}, a natural candidate for the
	solver is the generalized minimal residual method (GMRES) with optimal preconditioner for the
	symmetric part.
	Another alternative would be to consider an optimal preconditioner for the symmetric part and apply a conjugate gradient method to the normal equations~(CGNR).
	However, for both approaches, \textsl{a~posteriori} error estimation and
	contraction in the PDE-related energy norm are still open.
	Instead,~\cite{aisfem} follows the constructive
	proof
	of the Lax--Milgram lemma to
	derive a contractive solver. Its convergence analysis, as given in~\cite{aisfem}, is improved in the
	following Section~\ref{section:double}.
\end{remark}

\section{AFEM with nested contractive solvers}
\label{section:double}

While contractive solvers for SPD systems are well-understood in the literature,
the recent work~\cite{aisfem} presents contractive solvers for the nonsymmetric variational formulation~\eqref{eq:discrete}
that
essentially fit into the framework of Section~\ref{section:single} and allow for the numerical
analysis of AFEM with optimal complexity.
To this end, the proof of the Lax--Milgram lemma as proposed by~\cite{Zarantonello1960} is
exploited algorithmically (while the original proof in~\cite{pl1954} relies on the
Hahn--Banach separation theorem): For $\delta > 0$,
we consider the Zarantonello mapping
$\Phi_H(\delta; \cdot) \colon \mathcal{X}_H \to
	\mathcal{X}_H$ defined by
\begin{equation}\label{eq:double:zarantonello}
	a(\Phi_H(\delta; u_H), v_H)
	=
	a(u_H,v_H) + \delta \big[ F(v_H) - b(u_H, v_H) \big]
	\quad \text{for all } u_H, v_H \in \mathcal{X}_H.
\end{equation}
Since $a(\cdot, \cdot)$ is a scalar product, $\Phi_H(\delta; u_H)
	\in \mathcal{X}_H$ is well-defined. Moreover, for any $0 < \delta <
	2\alpha/L^2$ and $0 < q_{\textup{sym}}^\star \coloneqq [1 - \delta(2\alpha-\delta L^2)]^{1/2} < 1$, this mapping
is contractive, i.e.,
\begin{equation}\label{eq:double:zarantonello:contraction}
	|\mkern-1.5mu|\mkern-1.5mu| u_H^\star - \Phi_H(\delta; u_H) |\mkern-1.5mu|\mkern-1.5mu|
	\le q_{\textup{sym}}^\star \, |\mkern-1.5mu|\mkern-1.5mu| u_H^\star - u_H |\mkern-1.5mu|\mkern-1.5mu|
	\quad \text{for all } u_H \in \mathcal{X}_H;
\end{equation}
see also~\cite{hw2020a,hw2020b}.
Note that~\eqref{eq:double:zarantonello} corresponds to a linear SPD system. For this, we employ a uniformly contractive algebraic solver with
iteration function $\Psi_H(u_H^\sharp;\cdot) \colon \mathcal{X}_H \to \mathcal{X}_H$ to approximate the
solution $u_H^\sharp \coloneqq \Phi_H(\delta; u_H)$ to the SPD system~\eqref{eq:double:zarantonello}, i.e.,
\begin{equation}\label{eq:double:contractive-solver}
	|\mkern-1.5mu|\mkern-1.5mu| u_H^\sharp - \Psi_H(u_H^\sharp; w_H) |\mkern-1.5mu|\mkern-1.5mu|
	\le
	q_{\textup{alg}} \, |\mkern-1.5mu|\mkern-1.5mu| u_H^\sharp - w_H |\mkern-1.5mu|\mkern-1.5mu|
	\quad \text{for all } w_H \in \mathcal{X}_H \text{ and all } \mathcal{T}_H \in \mathbb{T},
\end{equation}
where $0 < q_{\textup{alg}} < 1$ depends only on $a(\cdot,\cdot)$, but is independent of $\mathcal{X}_H$.
Clearly, no knowledge of $u_H^\sharp$ is needed
to compute $\Psi_H(u_H^\sharp; w_H)$ but only that of the corresponding right-hand
side \(a(u_H^\sharp, \cdot) \colon \mathcal{X}_H \to \mathbb{R}\); see, e.g.,~\cite{cnx2012,wz2017,imps2022}.

\begin{algorithm}[AFEM with nested contractive solvers]\label{algorithm:double}
	Given an initial mesh $\mathcal{T}_0$, the Zarantonello parameter $\delta > 0$, adaptivity parameters $0 < \theta \le 1$ and $C_{\textup{mark}} \ge 1$, solver-stopping parameters $\lambda_{\textup{sym}}, \lambda_{\textup{alg}} > 0$, and an initial guess $u_0^{0,0} \coloneqq u_0^{0,\underline{j}} \in \mathcal{X}_0$, iterate the following steps~\ref{alg:double:i}--\ref{alg:double:iv} for all $\ell = 0, 1, 2, 3, \dots$:
	\begin{enumerate}[label=(\roman*), ref = {\rm (\roman*)}, font = \upshape]
		\item\label{alg:double:i} \textbf{Solve \& estimate:} For all $k = 1, 2, 3, \dots$, repeat the following steps~\ref{alg:double:a}--\ref{alg:double:c} until
		      \begin{equation}\label{eq:double:termination:sym}
			      |\mkern-1.5mu|\mkern-1.5mu| u_\ell^{k,\underline{j}} - u_\ell^{k-1,\underline{j}} |\mkern-1.5mu|\mkern-1.5mu|
			      \le
			      \lambda_{\textup{sym}} \, \eta_\ell(u_\ell^{k,\underline{j}})
		      \end{equation}
		      \begin{enumerate}[label=(\alph*), ref = {\rm (\alph*)},
				      font = \upshape]
			      \item\label{alg:double:a} Define \(u_\ell^{k, 0} \coloneqq u_\ell^{k-1,\underline{j}}\) and, only as a theoretical quantity, \(u_\ell^{k,\star} \coloneqq \Phi_\ell(\delta; u_\ell^{k-1,\underline{j}})\).
			      \item\label{alg:double:b} \textbf{Inner solver loop:} For all $j = 1, 2, 3, \dots$, repeat the steps~\ref{alg:double:I}--\ref{alg:double:II} until
			            \begin{equation}\label{eq:double:termination:alg}
				            |\mkern-1.5mu|\mkern-1.5mu| u_\ell^{k,j} - u_\ell^{k,j-1} |\mkern-1.5mu|\mkern-1.5mu|
				            \le
				            \lambda_{\textup{alg}} \, \big[ \lambda_{\textup{sym}} \eta_\ell(u_\ell^{k,j}) + |\mkern-1.5mu|\mkern-1.5mu|
					            u_\ell^{k,j} - u_\ell^{k-1,\underline{j}} |\mkern-1.5mu|\mkern-1.5mu| \big].
			            \end{equation}
			            \begin{enumerate}[label=(\Roman*), ref = {\rm (\Roman*)}, font = \upshape]
				            \item\label{alg:double:I} Compute one step of the contractive SPD solver $u_\ell^{k,j} \coloneqq \Psi_\ell(u_\ell^{k,\star}; u_\ell^{k,j-1})$.
				            \item\label{alg:double:II} Compute the refinement indicators $\eta_\ell(T, u_\ell^{k,j})$ for all $T \in \mathcal{T}_\ell$.
			            \end{enumerate}
			      \item\label{alg:double:c} Upon termination of the inner solver loop, define the index
			            $\underline{j}[\ell,k] \coloneqq j \in \mathbb{N}$.
		      \end{enumerate}
		\item\label{alg:double:ii} Upon termination of the outer solver loop, define the index $\underline{k}[\ell] \coloneqq k \in \mathbb{N}$.
		\item \textbf{Mark:} Determine a set $\mathcal{M}_\ell \in \mathbb{M}_\ell[\theta,
				      u_\ell^{\underline{k},\underline{j}}]$ satisfying~\eqref{eq:doerfler} with $u_\ell^\star$ replaced
		      by $u_\ell^{\underline{k},\underline{j}}$.
		\item\label{alg:double:iv} \textbf{Refine:} Generate $\mathcal{T}_{\ell+1} \coloneqq \mathtt{refine}(\mathcal{T}_\ell,\mathcal{M}_\ell)$ and define $u_{\ell+1}^{0,0} \coloneqq u_{\ell+1}^{0,\underline{j}} \coloneqq u_\ell^{\underline{k},\underline{j}}$.
	\end{enumerate}
\end{algorithm}
Extending the index notation from Section~\ref{section:single}, we define the
triple index set
\begin{equation*}
	\mathcal{Q} \coloneqq \{(\ell, k, j) \in \mathbb{N}_0^3 \,:\, u_\ell^{k, j} \text{ is used in
		Algorithm~\ref{algorithm:double}}\}
\end{equation*}
together with the lexicographic
ordering
\begin{equation*}
	(\ell', k', j') \le (\ell, k, j)
	\quad :\Longleftrightarrow \quad
	u_{\ell'}^{k', j'} \text{ is defined not later than $u_\ell^{k,j}$ in
		Algorithm~\ref{algorithm:double}}
\end{equation*}
and the total step counter
\begin{equation}\label{eq:stepcounter}
	\vert \ell, k, j \vert \coloneqq \# \{(\ell', k', j') \in \mathcal{Q} \,:\,
	(\ell', k', j') \le (\ell, k, j)\}
	\in \mathbb{N}_0 \quad \text{for } (\ell, k, j) \in \mathcal{Q}.
\end{equation}
Moreover, we define the stopping indices
\begin{subequations}
	\begin{align}
		\underline{\ell}      & \coloneqq \sup\{\ell \in \mathbb{N}_0 \,:\, (\ell,0,0) \in \mathcal{Q}\} \in \mathbb{N}_0 \cup \{\infty\},
		\\
		\underline{k}[\ell]   & \coloneqq \sup\{k \in \mathbb{N}_0 \,:\, (\ell,k,0) \in \mathcal{Q}\} \in \mathbb{N} \cup \{\infty\},
		\quad \text{whenever } (\ell,0,0) \in \mathcal{Q},
		\\
		\underline{j}[\ell,k] & \coloneqq \sup\{j \in \mathbb{N}_0 \,:\, (\ell,k,j) \in \mathcal{Q}\} \in \mathbb{N} \cup \{\infty\},
		\quad \text{whenever } (\ell,k,0) \in \mathcal{Q}.
	\end{align}
\end{subequations}
First, these definitions are consistent with
those of
Algorithm~\ref{algorithm:double}\ref{alg:double:b} and
Algorithm~\ref{algorithm:double}\ref{alg:double:ii}.
Second, there holds indeed
\(\underline{j}[\ell, k] < \infty\) for all \((\ell, k, 0) \in \mathcal{Q}\); see~\cite[Lemma~3.2]{aisfem}.
Third,
\(\underline{\ell} < \infty\) yields \(\underline{k}[\underline{\ell}] = \infty\) and
\(\eta_{\underline{\ell}}(u_{\underline{\ell}}^\star) = 0\) with \(u_{\underline{\ell}}^\star = u^\star\);
see~\cite[Lemma~5.2]{aisfem}.

The following theorem improves~\cite[Theorem~4.1]{aisfem} in the following sense. First, we prove R-linear convergence for all $\ell \ge \ell_0 = 0$,
while $\ell_0 \in \mathbb{N}$ is unknown in practice and depends on \(u^\star\) and the non-accessible sequence \((u_\ell^\star)_{\ell \in \mathbb{N}_0}\)
in~\cite{aisfem}. Second,
\cite{aisfem} requires severe restrictions on $\lambda_{\textup{alg}}$ beyond~\eqref{eq:double:assumption:lambda} below.
We note that~\eqref{eq:double:assumption:lambda} is indeed satisfied, if the
algebraic system is solved exactly, i.e., $\lambda_{\textup{alg}} = 0$, so that
Theorem~\ref{theorem:double:convergence} is a consistent
generalization of Theorem~\ref{theorem:single:convergence}.

\begin{theorem}[full R-linear convergence of Algorithm~\ref{algorithm:double}]\label{theorem:double:convergence}
	Let $0 < \theta \le 1$, $C_{\textup{mark}} \ge 1$, $\lambda_{\textup{sym}}, \lambda_{\textup{alg}} > 0$, and $u_0^{0,0} \in \mathcal{X}_0$.
	With $q_\theta \coloneqq [1 - (1-q_{\textup{red}}^2)\theta]^{1/2}$, suppose that
	\begin{equation}\label{eq:double:assumption:lambda}
		0
		< \frac{q_{\textup{sym}}^\star + \frac{2 \, q_{\textup{alg}}}{1-q_{\textup{alg}}} \, \lambda_{\textup{alg}}}{1 - \frac{2 \, q_{\textup{alg}}}{1-q_{\textup{alg}}} \, \lambda_{\textup{alg}}}
		\eqqcolon q_{\textup{sym}} < 1
		\quad \text{and} \quad
		\lambda_{\textup{alg}} \lambda_{\textup{sym}} < \frac{(1-q_{\textup{alg}})(1-q_{\textup{sym}}^\star)(1-q_\theta)}{8 \, q_{\textup{alg}} C_{\textup{stab}}}.
	\end{equation}
	Then, Algorithm~\ref{algorithm:double} guarantees R-linear convergence of the quasi-error
	\begin{equation}\label{eq:double:quasi-error}
		\mathrm{H}_\ell^{k,j}
		\coloneqq
		|\mkern-1.5mu|\mkern-1.5mu| u_\ell^\star - u_\ell^{k,j} |\mkern-1.5mu|\mkern-1.5mu|
		+ |\mkern-1.5mu|\mkern-1.5mu| u_\ell^{k,\star} - u_\ell^{k,j} |\mkern-1.5mu|\mkern-1.5mu|
		+ \eta_\ell(u_\ell^{k,j}),
	\end{equation}
	i.e., there exist constants $0 < q_{\textup{lin}} < 1$ and $C_{\textup{lin}} > 0$ such that
	\begin{equation}\label{eq:double:convergence}
		\mathrm{H}_\ell^{k,j}
		\le
		C_{\textup{lin}} q_{\textup{lin}}^{|\ell,k,j| - |\ell'\!,k'\!,j'|} \, \mathrm{H}_{\ell'}^{k',j'}
		\text{ for all } (\ell'\!,k'\!,j'),(\ell,k,j) \in \mathcal{Q}
		\text{ with } |\ell'\!,k'\!,j'| \le |\ell,k,j|.
	\end{equation}
\end{theorem}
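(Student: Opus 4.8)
The plan is to transfer the two-step strategy behind the proof of Theorem~\ref{theorem:single:convergence} to the present triple-index situation, again reducing everything to the tail-summability criterion of Lemma~\ref{lemma:summability:criterion} and to the equivalence in Lemma~\ref{lemma:summability}. First I would record two preliminary facts that isolate the influence of the inexact algebraic solver. On the one hand, the a~posteriori bound $|\mkern-1.5mu|\mkern-1.5mu| u_\ell^{k,\star} - u_\ell^{k,j} |\mkern-1.5mu|\mkern-1.5mu| \le \frac{q_{\textup{alg}}}{1-q_{\textup{alg}}}\,|\mkern-1.5mu|\mkern-1.5mu| u_\ell^{k,j} - u_\ell^{k,j-1} |\mkern-1.5mu|\mkern-1.5mu|$ following from contraction~\eqref{eq:double:contractive-solver}, combined with $u_\ell^{k,0}=u_\ell^{k-1,\underline{j}}$ and the inner stopping criterion~\eqref{eq:double:termination:alg}, yields a bound on $\underline{j}[\ell,k]$ that is uniform in $\ell$ and $k$ (depending only on $q_{\textup{alg}}$ and $\lambda_{\textup{alg}}$; cf.~\cite{aisfem}) and, at the terminal inner index, $\mathrm{H}_\ell^{k,\underline{j}} \simeq |\mkern-1.5mu|\mkern-1.5mu| u_\ell^\star - u_\ell^{k,\underline{j}} |\mkern-1.5mu|\mkern-1.5mu| + \eta_\ell(u_\ell^{k,\underline{j}})$. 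On the other hand, the Zarantonello contraction~\eqref{eq:double:zarantonello:contraction} applied to $u_\ell^{k,\star}=\Phi_\ell(\delta;u_\ell^{k-1,\underline{j}})$, together with the bound above and~\eqref{eq:double:termination:alg}, gives for $k\ge1$ an \emph{effective outer contraction} $|\mkern-1.5mu|\mkern-1.5mu| u_\ell^\star - u_\ell^{k,\underline{j}} |\mkern-1.5mu|\mkern-1.5mu| \le q_{\textup{sym}}\,|\mkern-1.5mu|\mkern-1.5mu| u_\ell^\star - u_\ell^{k-1,\underline{j}} |\mkern-1.5mu|\mkern-1.5mu| + C\,\lambda_{\textup{alg}}\lambda_{\textup{sym}}\,\eta_\ell(u_\ell^{k,\underline{j}})$ with $q_{\textup{sym}}<1$ by the first condition in~\eqref{eq:double:assumption:lambda} and $C>0$ depending only on $q_{\textup{alg}}$; this is the nested analogue of~\eqref{eq1:single:convergence}.

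The first step then establishes tail summability with respect to $\ell$. Mirroring~\eqref{eq:a_ell:perturbed-contraction}, I would set $a_\ell\coloneqq|\mkern-1.5mu|\mkern-1.5mu| u_\ell^\star - u_\ell^{\underline{k},\underline{j}} |\mkern-1.5mu|\mkern-1.5mu| + \gamma\,\eta_\ell(u_\ell^{\underline{k},\underline{j}})\simeq\mathrm{H}_\ell^{\underline{k},\underline{j}}$ and $b_\ell\coloneqq q_{\textup{ctr}}\,|\mkern-1.5mu|\mkern-1.5mu| u_{\ell+1}^\star - u_\ell^\star |\mkern-1.5mu|\mkern-1.5mu|$ for suitable $0<\gamma\le1$ and $0<q_{\textup{ctr}}<1$. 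Using the nested initial guess $u_{\ell+1}^{0,\underline{j}}=u_\ell^{\underline{k},\underline{j}}$ (so that $\underline{k}[\ell+1]\ge1$), the effective outer contraction iterated over the finitely many outer steps on $\mathcal{T}_{\ell+1}$, and the estimator-reduction argument from the proof of Theorem~\ref{theorem:exact:convergence} (stability~\eqref{axiom:stability}, reduction~\eqref{axiom:reduction}, D\"orfler marking~\eqref{eq:doerfler}), one derives the perturbed contraction $a_{\ell+1}\le q_{\textup{ctr}}\,a_\ell+b_\ell$; here the second condition in~\eqref{eq:double:assumption:lambda} is exactly what is needed to absorb the perturbation term $C\lambda_{\textup{alg}}\lambda_{\textup{sym}}\eta_{\ell+1}(\cdot)$ into the left-hand side while keeping $q_{\textup{ctr}}<1$, with a constant that is fixed \emph{a~priori}. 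The remaining two hypotheses of Lemma~\ref{lemma:summability:criterion} follow precisely as in Theorem~\ref{theorem:exact:convergence}: $b_{\ell+N}\lesssim a_\ell$ from reliability~\eqref{axiom:reliability}, quasi-monotonicity~\eqref{eq:quasi-monotonicity}, and stability~\eqref{axiom:stability} (cf.~\eqref{eq:cea+reliability}), and $\sum_{\ell'=\ell}^{\ell+N}b_{\ell'}^2\lesssim(N+1)^{1-\delta}a_\ell^2$ from quasi-orthogonality~\eqref{axiom:orthogonality} and reliability (cf.~\eqref{eq:single:orthogonality}). Lemma~\ref{lemma:summability:criterion} (equivalently Lemma~\ref{lemma:summability}) then gives $\sum_{\ell'=\ell+1}^{\underline{\ell}-1}\mathrm{H}_{\ell'}^{\underline{k},\underline{j}}\lesssim\mathrm{H}_\ell^{\underline{k},\underline{j}}$.

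The second step handles the two inner loops and assembles the result. For fixed $\ell$ and $0\le k\le k'\le\underline{k}[\ell]$, the failure of the outer stopping criterion~\eqref{eq:double:termination:sym} at the indices below $\underline{k}$, combined with the effective outer contraction, yields $\mathrm{H}_\ell^{k',\underline{j}}\lesssim q_{\textup{sym}}^{k'-k}\,\mathrm{H}_\ell^{k,\underline{j}}$ and $\mathrm{H}_\ell^{\underline{k},\underline{j}}\lesssim\mathrm{H}_\ell^{\underline{k}-1,\underline{j}}$, in analogy with~\eqref{eq:single:contraction-Lambda}. Within a fixed inner loop $0\le j\le j'\le\underline{j}[\ell,k]$, contraction~\eqref{eq:double:contractive-solver} of the SPD solver and stability~\eqref{axiom:stability} give the plain bound $\mathrm{H}_\ell^{k,j'}\lesssim\mathrm{H}_\ell^{k,j}$ — which suffices since the loop length is uniformly bounded — while stability also provides the transition estimates $\mathrm{H}_\ell^{k+1,0}\lesssim\mathrm{H}_\ell^{k,\underline{j}}$ and $\mathrm{H}_{\ell+1}^{0,0}\lesssim\mathrm{H}_\ell^{\underline{k},\underline{j}}$ (cf.~\eqref{eq2:single:contraction-Lambda}). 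Summing these estimates — first over the uniformly bounded number of indices in the current inner loop, then geometrically over the outer index via the $q_{\textup{sym}}$-contraction, then geometrically over $\ell$ via the first step — exactly as in the final display of the proof of Theorem~\ref{theorem:single:convergence}, proves tail summability $\sum_{(\ell',k',j')\in\mathcal{Q},\,|\ell',k',j'|>|\ell,k,j|}\mathrm{H}_{\ell'}^{k',j'}\lesssim\mathrm{H}_\ell^{k,j}$, and Lemma~\ref{lemma:summability} concludes~\eqref{eq:double:convergence}.

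The routine parts of this plan are the geometric-series bookkeeping in the second step and the verification of the two purely estimator-based hypotheses of Lemma~\ref{lemma:summability:criterion}. The main obstacle is the perturbed contraction $a_{\ell+1}\le q_{\textup{ctr}}a_\ell+b_\ell$ of the first step with a genuine $q_{\textup{ctr}}<1$ that is \emph{independent of $\mathcal{T}_0$ and of the mesh sequence}: this requires tracking simultaneously how the inexactness of the inner algebraic solver, quantified by $\lambda_{\textup{alg}}$ and $\lambda_{\textup{sym}}$, degrades both the Zarantonello contraction and the estimator reduction across one mesh refinement, and then checking that the two conditions in~\eqref{eq:double:assumption:lambda} — and nothing stronger than in~\cite{aisfem} — suffice to balance these degradations against $1-q_{\textup{sym}}$ and $1-q_\theta$. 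A secondary technical point is treating the boundary cases $\underline{\ell}<\infty$ (non-termination on the final mesh) and the initial indices $k=0$, $j=0$ uniformly throughout.
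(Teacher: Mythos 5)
Your plan is correct and, in all essential respects, it is the paper's own proof: your ``effective outer contraction'' is exactly Lemma~\ref{lem:inexact_contraction} (pure $q_{\textup{sym}}$-contraction~\eqref{eq1:zarantonello:inexact} for $k<\underline{k}[\ell]$, perturbed contraction~\eqref{eq2:zarantonello:inexact} with the $\lambda_{\textup{alg}}\lambda_{\textup{sym}}\,\eta_\ell$-term only at $k=\underline{k}[\ell]$); your Step~1 is the paper's Step~2, where the weighted final-iterate quasi-error $a_\ell=|\mkern-1.5mu|\mkern-1.5mu| u_\ell^\star-u_\ell^{\underline k,\underline j}|\mkern-1.5mu|\mkern-1.5mu|+\gamma\,\eta_\ell(u_\ell^{\underline k,\underline j})$ is fed into Lemma~\ref{lemma:summability:criterion}, and the absorption you flag as the main obstacle is carried out in~\eqref{eq:step3:1} with the explicit choice $\gamma=q_\theta(1-q_{\textup{sym}}^\star)/(4C_{\textup{stab}})$, for which precisely the second condition in~\eqref{eq:double:assumption:lambda} yields $q_{\textup{ctr}}<1$; and your Step~2 matches the paper's Steps~3--6 (contraction in $k$ via failure of~\eqref{eq:double:termination:sym}, stability of the transitions $k\mapsto k+1$ and $\ell\mapsto\ell+1$, geometric-series bookkeeping, Lemma~\ref{lemma:summability}). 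The one genuine deviation is your treatment of the innermost algebraic loop: the paper proves the geometric decay $\mathrm{H}_\ell^{k,j'}\lesssim q_{\textup{alg}}^{j'-j}\,\mathrm{H}_\ell^{k,j}$ in~\eqref{eq3:step10}, which makes no use of any bound on $\underline{j}[\ell,k]$ (only its finiteness is cited from~\cite{aisfem}), whereas you replace this by single-step stability plus a uniform bound on $\underline{j}[\ell,k]$. That bound does hold---comparing the geometrically decaying increment $|\mkern-1.5mu|\mkern-1.5mu| u_\ell^{k,j}-u_\ell^{k,j-1}|\mkern-1.5mu|\mkern-1.5mu|\le(1+q_{\textup{alg}})q_{\textup{alg}}^{j-1}|\mkern-1.5mu|\mkern-1.5mu| u_\ell^{k,\star}-u_\ell^{k,0}|\mkern-1.5mu|\mkern-1.5mu|$ with the non-degenerating term $\lambda_{\textup{alg}}|\mkern-1.5mu|\mkern-1.5mu| u_\ell^{k,j}-u_\ell^{k,0}|\mkern-1.5mu|\mkern-1.5mu|\ge\lambda_{\textup{alg}}(1-q_{\textup{alg}}^j)|\mkern-1.5mu|\mkern-1.5mu| u_\ell^{k,\star}-u_\ell^{k,0}|\mkern-1.5mu|\mkern-1.5mu|$ on the right of~\eqref{eq:double:termination:alg}---so your route is valid, but it needs this extra argument (which you should make explicit rather than only cite), while the paper's geometric-decay argument buys independence of the inner-loop length and is the version that generalizes when no uniform bound is available; also note that the derivation of the effective outer contraction uses the validity, respectively failure, of the outer stopping criterion~\eqref{eq:double:termination:sym}, not only~\eqref{eq:double:termination:alg}, and that when iterating it across the outer loop you should use the unperturbed form for $k<\underline{k}[\ell]$ so that no intermediate estimator terms accumulate.
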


As proven for Corollary~\ref{corollary:rates:complexity} in Section~\ref{section:single}, an immediate consequence of full linear
convergence (and the geometric series) is that convergence rates with respect
to the number of degrees of freedom and with respect to the overall computational cost
coincide.

\begin{corollary}[rates = complexity]
	For $s > 0$, full R-linear convergence~\eqref{eq:double:convergence} yields
	\begin{equation*}
		\medmuskip = -4mu
		M(s)
		\hspace{-0.1cm} \coloneqq
		\hspace{-0.2cm} \sup_{(\ell,k,j) \in \mathcal{Q}} (\#\mathcal{T}_\ell)^s \, \mathrm{H}_\ell^{k,j}
		\le \hspace{-0.2cm}
		\sup_{(\ell,k,j) \in \mathcal{Q}} \Bigl(\hspace{-0.2cm}
		\sum_{\substack{(\ell',k',j') \in \mathcal{Q} \\
				|\ell',k',j'| \le |\ell,k,j|}}
		\sum_{\substack{(\ell'',k'',j'') \in \mathcal{Q} \\ |\ell'',k'',j''| \le |\ell',k',j'|}} \hspace{-0.5cm}\#\mathcal{T}_{\ell''}\Bigr)^s \mathrm{H}_\ell^{k,j}
		\le C_{\textup{cost}}(s) \, M(s),
	\end{equation*}
	where the constant $C_{\textup{cost}}(s) > 0$ depends only on $C_{\textup{lin}}$, $q_{\textup{lin}}$, and $s$.
	Moreover, there exists $s_0 > 0$ such that $M(s) < \infty$ for all $0 < s \le s_0$.\qed
\end{corollary}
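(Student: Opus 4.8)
The plan is to reduce the triple-indexed corollary to the already-proven Lemma~\ref{lemma:complexity:sequence} (rates = complexity criterion), exactly as Corollary~\ref{corollary:rates:complexity} does in the single-index setting. The key observation is that the index set $\mathcal{Q}$ is countable and linearly ordered by the lexicographic ordering, so enumerating $\mathcal{Q}$ through the total step counter $|\cdot,\cdot,\cdot|$ turns the triple-indexed quantities into honest scalar sequences.

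\textbf{Step 1 (flatten the index set).} I would set $a_{|\ell,k,j|} \coloneqq \mathrm{H}_\ell^{k,j}$ and $t_{|\ell,k,j|} \coloneqq \#\mathcal{T}_\ell$ for $(\ell,k,j) \in \mathcal{Q}$. Since $(\ell,k,j) \mapsto |\ell,k,j|$ is a bijection from $\mathcal{Q}$ onto $\{0,1,\dots,\#\mathcal{Q}-1\}$ (or onto $\mathbb{N}_0$ if $\mathcal{Q}$ is infinite) that respects the orderings, the double sum appearing in the statement becomes precisely $\sum_{\ell'=0}^{m}\sum_{\ell''=0}^{\ell'} t_{\ell''}$ with $m = |\ell,k,j|$, and $\sup_{(\ell,k,j)\in\mathcal{Q}}$ becomes $\sup_{m}$. (If $\underline{\ell}<\infty$ one can either argue with the finite sequence directly or append a constant tail; the estimates are unaffected.)

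\textbf{Step 2 (verify the hypotheses of Lemma~\ref{lemma:complexity:sequence}).} The first hypothesis, $a_{m+n} \le C_{\textup{lin}} q_{\textup{lin}}^n a_m$, is exactly full R-linear convergence~\eqref{eq:double:convergence}, which is assumed. The second hypothesis, $t_{m+1} \le C_2 t_m$, needs that $\#\mathcal{T}_\ell$ grows at most geometrically along the enumeration; since $t_m$ only changes (by a refinement step) when the mesh level increments, and NVB refinement satisfies $\#\mathcal{T}_{\ell+1} \le C_{\textup{son}}\,\#\mathcal{T}_\ell$ with $C_{\textup{son}} \in \{2,\dots,2^d\}$ depending only on $d$, one has $t_{m+1} \le C_{\textup{son}} t_m$ for all $m$ (with $t_{m+1}=t_m$ when the level does not change). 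Thus both hypotheses hold with $C_1 = C_{\textup{lin}}$, $q = q_{\textup{lin}}$, $C_2 = C_{\textup{son}}$.

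\textbf{Step 3 (conclude).} Lemma~\ref{lemma:complexity:sequence} applied to $(a_m)$ and $(t_m)$ gives, for every $s>0$, the sandwich $M(s) \le \sup_m (\sum_{\ell'=0}^m \sum_{\ell''=0}^{\ell'} t_{\ell''})^s a_m \le C_{\textup{cost}}(s) M(s)$ with $C_{\textup{cost}}(s)$ depending only on $C_{\textup{lin}}$, $q_{\textup{lin}}$, $s$, together with the existence of $s_0 > 0$ (depending only on $C_{\textup{son}}$ and $q_{\textup{lin}}$, hence ultimately on $d$, $C_{\textup{lin}}$, $q_{\textup{lin}}$) such that $M(s)<\infty$ for all $0 < s \le s_0$. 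Translating back via the bijection of Step 1 yields exactly the claimed statement. I do not expect a genuine obstacle here: the only mild point of care is bookkeeping the bijection between the triple index and the linear counter and checking that the nested double sum in $\mathcal{Q}$-notation literally coincides with the $\sum\sum t$ expression in Lemma~\ref{lemma:complexity:sequence} — this is routine but should be stated explicitly so the reduction is airtight.
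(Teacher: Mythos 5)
Your proposal is correct and follows essentially the paper's own route: the paper likewise obtains this corollary as an immediate consequence of Lemma~\ref{lemma:complexity:sequence} applied to \(a_{|\ell,k,j|} \coloneqq \mathrm{H}_\ell^{k,j}\) and \(t_{|\ell,k,j|} \coloneqq \#\mathcal{T}_\ell\), using full R-linear convergence~\eqref{eq:double:convergence} for the first hypothesis and the boundedness of NVB refinement (plus \(t\) being constant within a mesh level) for the second. Your explicit bookkeeping of the flattening bijection and the identification of the nested double sum with the lemma's expression is exactly the tacit step the paper leaves to the reader.
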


The proof of Theorem~\ref{theorem:double:convergence} requires the following
lemma (which is essentially taken from~\cite{aisfem}).
It deduces the contraction of the
inexact Zarantonello iteration with computed iterates $u_\ell^{k,\underline{j}} \approx
	u_\ell^{k, \star}$
from the exact Zarantonello iteration. For the inexact iteration, the linear
SPD system~\eqref{eq:double:zarantonello} is solved with the contractive algebraic solver~\eqref{eq:double:contractive-solver}, i.e.,
$u_\ell^{k,\star} \coloneqq \Phi_\ell(\delta; u_\ell^{k-1,\underline{j}})$
and $u_\ell^{k,j} \coloneqq \Psi_\ell(u_\ell^{k,\star}, u_\ell^{k,j-1})$ guarantee
\begin{equation}\label{eq:zarantonello}
	|\mkern-1.5mu|\mkern-1.5mu| u_\ell^\star - u_\ell^{k,\star} |\mkern-1.5mu|\mkern-1.5mu|
	\le q_{\textup{sym}}^\star \, |\mkern-1.5mu|\mkern-1.5mu| u_\ell^\star - u_\ell^{k-1,\underline{j}} |\mkern-1.5mu|\mkern-1.5mu|
	\quad \text{for all } (\ell,k,j) \in \mathcal{Q} \text{ with } k \ge 1.
\end{equation}
We emphasize that
contraction is only guaranteed for $0 < k < \underline{k}[\ell]$
in~\eqref{eq1:zarantonello:inexact} below, while the final iteration $k =
	\underline{k}[\ell]$ leads to a perturbed contraction~\eqref{eq2:zarantonello:inexact}
thus requiring additional treatment in the later analysis.
The proof of Lemma~\ref{lem:inexact_contraction} is given in~\ref{appendix}.

\begin{lemma}[contraction of inexact Zarantonello iteration]
	\label{lem:inexact_contraction}
	Under the assumptions of Theorem~\ref{theorem:double:convergence}, the inexact
	Zarantonello iteration used in Algorithm~\ref{algorithm:double}
	satisfies
	\begin{equation}\label{eq1:zarantonello:inexact}
		|\mkern-1.5mu|\mkern-1.5mu| u_\ell^\star - u_\ell^{k,\underline{j}} |\mkern-1.5mu|\mkern-1.5mu|
		\le q_{\textup{sym}} \, |\mkern-1.5mu|\mkern-1.5mu| u_\ell^\star - u_\ell^{k-1,\underline{j}} |\mkern-1.5mu|\mkern-1.5mu|
		\quad \text{for all } (\ell,k,\underline{j}) \in \mathcal{Q} \text{ with } 1 \le k < \underline{k}[\ell]
	\end{equation}
	as well as
	\begin{equation}\label{eq2:zarantonello:inexact}
		|\mkern-1.5mu|\mkern-1.5mu| u_\ell^\star - u_\ell^{\underline{k},\underline{j}} |\mkern-1.5mu|\mkern-1.5mu|
		\le
		q_{\textup{sym}}^\star \, |\mkern-1.5mu|\mkern-1.5mu| u_\ell^\star - u_\ell^{\underline{k}-1,\underline{j}} |\mkern-1.5mu|\mkern-1.5mu|
		+ \frac{2 \, q_{\textup{alg}}}{1-q_{\textup{alg}}} \, \lambda_{\textup{alg}} \lambda_{\textup{sym}} \, \eta_\ell(u_\ell^{\underline{k},\underline{j}})
		\quad \text{for all } (\ell,\underline{k},\underline{j}) \in \mathcal{Q}.
	\end{equation}
\end{lemma}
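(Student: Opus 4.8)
The plan is to derive both inequalities from three ingredients already available: the exact Zarantonello contraction~\eqref{eq:zarantonello}, the contraction~\eqref{eq:double:contractive-solver} of the SPD algebraic solver, and the inner-loop stopping criterion~\eqref{eq:double:termination:alg}; the only place where the two estimates diverge is in the use made of the \emph{outer}-loop stopping criterion~\eqref{eq:double:termination:sym}. Throughout I would abbreviate $\mu \coloneqq \frac{q_{\textup{alg}}}{1-q_{\textup{alg}}} \, \lambda_{\textup{alg}}$ and note that the first condition in~\eqref{eq:double:assumption:lambda} forces $2\mu < 1$ (positivity of the denominator defining $q_{\textup{sym}}$) and, via the elementary identity $q_{\textup{sym}} = \frac{q_{\textup{sym}}^\star + 2\mu}{1-2\mu}$, also $q_{\textup{sym}} < 1$; only this condition enters the proof, the smallness of $\lambda_{\textup{alg}} \lambda_{\textup{sym}}$ being reserved for Theorem~\ref{theorem:double:convergence} itself.

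First I would establish the core algebraic bound. Since $u_\ell^{k,\star}$ solves the SPD system~\eqref{eq:double:zarantonello} exactly and $u_\ell^{k,j} = \Psi_\ell(u_\ell^{k,\star}; u_\ell^{k,j-1})$, the contraction~\eqref{eq:double:contractive-solver} and one triangle inequality yield the familiar \textsl{a~posteriori} estimate $|\mkern-1.5mu|\mkern-1.5mu| u_\ell^{k,\star} - u_\ell^{k,\underline{j}} |\mkern-1.5mu|\mkern-1.5mu| \le \frac{q_{\textup{alg}}}{1-q_{\textup{alg}}} \, |\mkern-1.5mu|\mkern-1.5mu| u_\ell^{k,\underline{j}} - u_\ell^{k,\underline{j}-1} |\mkern-1.5mu|\mkern-1.5mu|$ (which makes sense because $\underline{j}[\ell,k] \ge 1$). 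Inserting the inner-loop stopping criterion~\eqref{eq:double:termination:alg} at $j = \underline{j}$ and recalling $u_\ell^{k,0} = u_\ell^{k-1,\underline{j}}$ from Algorithm~\ref{algorithm:double}\ref{alg:double:a}, this becomes
\[
|\mkern-1.5mu|\mkern-1.5mu| u_\ell^{k,\star} - u_\ell^{k,\underline{j}} |\mkern-1.5mu|\mkern-1.5mu|
\le
\mu \, \big[ \lambda_{\textup{sym}} \, \eta_\ell(u_\ell^{k,\underline{j}}) + |\mkern-1.5mu|\mkern-1.5mu| u_\ell^{k,\underline{j}} - u_\ell^{k-1,\underline{j}} |\mkern-1.5mu|\mkern-1.5mu| \big]
\qquad \text{for all } (\ell,k,\underline{j}) \in \mathcal{Q} \text{ with } k \ge 1.
\]

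For~\eqref{eq2:zarantonello:inexact} I would then use that, in the final outer step $k = \underline{k}[\ell]$, the outer criterion~\eqref{eq:double:termination:sym} \emph{holds}, so $|\mkern-1.5mu|\mkern-1.5mu| u_\ell^{\underline{k},\underline{j}} - u_\ell^{\underline{k}-1,\underline{j}} |\mkern-1.5mu|\mkern-1.5mu| \le \lambda_{\textup{sym}} \, \eta_\ell(u_\ell^{\underline{k},\underline{j}})$; plugging this into the core bound gives $|\mkern-1.5mu|\mkern-1.5mu| u_\ell^{\underline{k},\star} - u_\ell^{\underline{k},\underline{j}} |\mkern-1.5mu|\mkern-1.5mu| \le 2\mu \, \lambda_{\textup{sym}} \, \eta_\ell(u_\ell^{\underline{k},\underline{j}})$, and one triangle inequality combined with~\eqref{eq:zarantonello} (for $u_\ell^{\underline{k},\star} = \Phi_\ell(\delta; u_\ell^{\underline{k}-1,\underline{j}})$) produces exactly~\eqref{eq2:zarantonello:inexact}, since $2\mu = \frac{2q_{\textup{alg}}}{1-q_{\textup{alg}}} \lambda_{\textup{alg}}$. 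For~\eqref{eq1:zarantonello:inexact} I would instead use that, for $1 \le k < \underline{k}[\ell]$, the outer criterion~\eqref{eq:double:termination:sym} \emph{fails}, i.e., $\lambda_{\textup{sym}} \, \eta_\ell(u_\ell^{k,\underline{j}}) < |\mkern-1.5mu|\mkern-1.5mu| u_\ell^{k,\underline{j}} - u_\ell^{k-1,\underline{j}} |\mkern-1.5mu|\mkern-1.5mu|$; then the core bound reads $|\mkern-1.5mu|\mkern-1.5mu| u_\ell^{k,\star} - u_\ell^{k,\underline{j}} |\mkern-1.5mu|\mkern-1.5mu| \le 2\mu \, |\mkern-1.5mu|\mkern-1.5mu| u_\ell^{k,\underline{j}} - u_\ell^{k-1,\underline{j}} |\mkern-1.5mu|\mkern-1.5mu|$, and two triangle inequalities with~\eqref{eq:zarantonello} yield $|\mkern-1.5mu|\mkern-1.5mu| u_\ell^{k,\underline{j}} - u_\ell^{k-1,\underline{j}} |\mkern-1.5mu|\mkern-1.5mu| \le |\mkern-1.5mu|\mkern-1.5mu| u_\ell^{k,\star} - u_\ell^{k,\underline{j}} |\mkern-1.5mu|\mkern-1.5mu| + (1+q_{\textup{sym}}^\star) \, |\mkern-1.5mu|\mkern-1.5mu| u_\ell^\star - u_\ell^{k-1,\underline{j}} |\mkern-1.5mu|\mkern-1.5mu|$; absorbing the $|\mkern-1.5mu|\mkern-1.5mu| u_\ell^{k,\star} - u_\ell^{k,\underline{j}} |\mkern-1.5mu|\mkern-1.5mu|$ term on the left (allowed since $2\mu < 1$), and applying one more triangle inequality and~\eqref{eq:zarantonello}, one arrives at $|\mkern-1.5mu|\mkern-1.5mu| u_\ell^\star - u_\ell^{k,\underline{j}} |\mkern-1.5mu|\mkern-1.5mu| \le \big( q_{\textup{sym}}^\star + \frac{2\mu(1+q_{\textup{sym}}^\star)}{1-2\mu} \big) \, |\mkern-1.5mu|\mkern-1.5mu| u_\ell^\star - u_\ell^{k-1,\underline{j}} |\mkern-1.5mu|\mkern-1.5mu|$, whereupon the identity $q_{\textup{sym}}^\star + \frac{2\mu(1+q_{\textup{sym}}^\star)}{1-2\mu} = \frac{q_{\textup{sym}}^\star + 2\mu}{1-2\mu} = q_{\textup{sym}}$ finishes it.

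I expect the only non-routine point to be the observation that the two claimed estimates require \emph{opposite} uses of the outer stopping criterion~\eqref{eq:double:termination:sym} — it is satisfied precisely when $k = \underline{k}$ and violated precisely when $k < \underline{k}$ — together with the absorption step in the interior case, where $|\mkern-1.5mu|\mkern-1.5mu| u_\ell^{k,\star} - u_\ell^{k,\underline{j}} |\mkern-1.5mu|\mkern-1.5mu|$ appears on both sides and must be moved over, which is licit exactly because $2\mu < 1$, i.e., precisely because of the first inequality in~\eqref{eq:double:assumption:lambda}. Everything else is bookkeeping with triangle inequalities and the two given contractions~\eqref{eq:zarantonello} and~\eqref{eq:double:contractive-solver}.
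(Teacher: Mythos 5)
Your argument is correct and follows essentially the same route as the paper's proof: the a~posteriori bound $|\mkern-1.5mu|\mkern-1.5mu| u_\ell^{k,\star}-u_\ell^{k,\underline{j}}|\mkern-1.5mu|\mkern-1.5mu| \le \frac{q_{\textup{alg}}}{1-q_{\textup{alg}}}\,|\mkern-1.5mu|\mkern-1.5mu| u_\ell^{k,\underline{j}}-u_\ell^{k,\underline{j}-1}|\mkern-1.5mu|\mkern-1.5mu|$ combined with the inner stopping criterion~\eqref{eq:double:termination:alg}, then the case split on whether~\eqref{eq:double:termination:sym} holds ($k=\underline{k}$) or fails ($k<\underline{k}$), and an absorption argument using $\frac{2q_{\textup{alg}}}{1-q_{\textup{alg}}}\lambda_{\textup{alg}}<1$. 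The only cosmetic difference is that you absorb $|\mkern-1.5mu|\mkern-1.5mu| u_\ell^{k,\star}-u_\ell^{k,\underline{j}}|\mkern-1.5mu|\mkern-1.5mu|$ before the final triangle inequality while the paper absorbs $|\mkern-1.5mu|\mkern-1.5mu| u_\ell^\star-u_\ell^{k,\underline{j}}|\mkern-1.5mu|\mkern-1.5mu|$ afterwards; both rearrangements yield the identical constant $q_{\textup{sym}}$.
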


\begin{proof}[\textbf{Proof of Theorem~\ref{theorem:double:convergence}}]
	The building blocks of the proof are the following:
	First, we show that a suitably weighted quasi-error involving the final iterates of the inexact Zarantonello iteration is tail-summable in the mesh-level index \(\ell\). Second, we show that the quasi-errors are tail-summable in the Zarantonello index \(k\) and, third, in the algebraic-solver index \(j\) and that they are stable in the nested iteration. Finally, combining these ideas leads to tail-summability with respect to the total step counter.
	The proof is split into~six steps. The first four steps follow the proof of Theorem~\ref{theorem:single:convergence} using
	\begin{equation}\label{eq1:double:proof}
		\mathrm{H}_\ell^k
		\coloneqq
		|\mkern-1.5mu|\mkern-1.5mu| u_\ell^\star - u_\ell^{k,\underline{j}} |\mkern-1.5mu|\mkern-1.5mu|
		+ \eta_\ell(u_\ell^{k,\underline{j}})
		\quad \text{for all } (\ell,k,\underline{j}) \in \mathcal{Q}.
	\end{equation}
	By contraction of the algebraic solver
	\eqref{eq:double:contractive-solver} as well as the stopping criteria for the
	algebraic
	solver~\eqref{eq:double:termination:alg} and for the symmetrization
	\eqref{eq:double:termination:sym},
	it holds that
	\begin{equation*}
		|\mkern-1.5mu|\mkern-1.5mu| u_\ell^{\underline{k},\star} - u_\ell^{\underline{k},\underline{j}} |\mkern-1.5mu|\mkern-1.5mu|
		\stackrel{\eqref{eq:double:contractive-solver}}
		\lesssim
		|\mkern-1.5mu|\mkern-1.5mu| u_\ell^{\underline{k},\underline{j}} - u_\ell^{\underline{k},\underline{j}-1} |\mkern-1.5mu|\mkern-1.5mu|
		\stackrel{\eqref{eq:double:termination:alg}}
		\lesssim
		\eta_\ell(u_\ell^{\underline{k},\underline{j}}) + |\mkern-1.5mu|\mkern-1.5mu| u_\ell^{\underline{k},\underline{j}} - u_\ell^{\underline{k}-1,\underline{j}} |\mkern-1.5mu|\mkern-1.5mu|
		\stackrel{\eqref{eq:double:termination:sym}}
		\lesssim
		\eta_\ell(u_\ell^{\underline{k},\underline{j}})
		\le
		\mathrm{H}_{\ell}^{\underline{k}}.
	\end{equation*}
	In particular, this proves equivalence
	\begin{equation}\label{eq2:double:proof}
		\mathrm{H}_\ell^{\underline{k}}
		\le
		\mathrm{H}_\ell^{\underline{k}} + |\mkern-1.5mu|\mkern-1.5mu| u_\ell^{\underline{k},\star} - u_\ell^{\underline{k},\underline{j}} |\mkern-1.5mu|\mkern-1.5mu| =
		\mathrm{H}_\ell^{\underline{k},\underline{j}} \lesssim \mathrm{H}_{\ell}^{\underline{k}}
		\quad \text{for all } (\ell,\underline{k},\underline{j}) \in \mathcal{Q}.
	\end{equation}

	\textbf{Step~1 (auxiliary estimates \& estimator reduction).}
	For $(\ell, \underline{k}, \underline{j}) \in \mathcal{Q}$, nested iteration $u_\ell^{\underline{k},0} = u_\ell^{\underline{k}-1,\underline{j}}$ and $\underline{j}[\ell,\underline{k}]
		\ge 1$ yield
	\begin{equation}\label{eq:step1:1*}
		|\mkern-1.5mu|\mkern-1.5mu| u_\ell^{\underline{k},\star} - u_\ell^{\underline{k},\underline{j}} |\mkern-1.5mu|\mkern-1.5mu|
		\stackrel{\eqref{eq:double:contractive-solver}}\le
		q_{\textup{alg}}^{\underline{j}[\ell,\underline{k}]} \, |\mkern-1.5mu|\mkern-1.5mu| u_\ell^{\underline{k},\star} - u_\ell^{\underline{k},0} |\mkern-1.5mu|\mkern-1.5mu|
		\le
		q_{\textup{alg}} \, |\mkern-1.5mu|\mkern-1.5mu| u_\ell^{\underline{k},\star} - u_\ell^{\underline{k}-1,\underline{j}} |\mkern-1.5mu|\mkern-1.5mu|.
	\end{equation}
	From this, we obtain that
	\begin{align}\label{eq:step1:2*}
		\begin{split}
			|\mkern-1.5mu|\mkern-1.5mu| u_\ell^\star - u_\ell^{\underline{k},\underline{j}} |\mkern-1.5mu|\mkern-1.5mu|
			 & \le
			|\mkern-1.5mu|\mkern-1.5mu| u_\ell^\star - u_\ell^{\underline{k},\star} |\mkern-1.5mu|\mkern-1.5mu| + |\mkern-1.5mu|\mkern-1.5mu| u_\ell^{\underline{k},\star} - u_\ell^{\underline{k},\underline{j}} |\mkern-1.5mu|\mkern-1.5mu|
			\\&
			\stackrel{\mathclap{\eqref{eq:step1:1*}}}\le
			(1+q_{\textup{alg}}) \, |\mkern-1.5mu|\mkern-1.5mu| u_\ell^\star - u_\ell^{\underline{k},\star} |\mkern-1.5mu|\mkern-1.5mu| + q_{\textup{alg}} \,
			|\mkern-1.5mu|\mkern-1.5mu| u_\ell^\star - u_\ell^{\underline{k}-1,\underline{j}} |\mkern-1.5mu|\mkern-1.5mu|
			\\&
			\stackrel{\mathclap{\eqref{eq:zarantonello}}}\le
			\bigl[ (1+q_{\textup{alg}}) q_{\textup{sym}}^\star + q_{\textup{alg}} \bigr] \, |\mkern-1.5mu|\mkern-1.5mu| u_\ell^\star -
			u_\ell^{\underline{k}-1,\underline{j}} |\mkern-1.5mu|\mkern-1.5mu|
			\le
			3 \, |\mkern-1.5mu|\mkern-1.5mu| u_\ell^\star - u_\ell^{\underline{k}-1,\underline{j}} |\mkern-1.5mu|\mkern-1.5mu|.
		\end{split}
	\end{align}
	For $(\ell+1, \underline{k}, \underline{j}) \in \mathcal{Q}$, contraction of the inexact
	Zarantonello iteration~\eqref{eq1:zarantonello:inexact}, nested iteration $u_{\ell+1}^{0,\underline{j}} =
		u_\ell^{\underline{k},\underline{j}}$, and $\underline{k}[\ell+1] \ge 1$,  show that
	\begin{equation}\label{eq:step1:3*}
		|\mkern-1.5mu|\mkern-1.5mu| u_{\ell+1}^\star - u_{\ell+1}^{\underline{k}-1,\underline{j}} |\mkern-1.5mu|\mkern-1.5mu|
		\stackrel{\mathclap{\eqref{eq1:zarantonello:inexact}}}\le
		q_{\textup{sym}}^{\underline{k}[\ell+1]-1} \, |\mkern-1.5mu|\mkern-1.5mu| u_{\ell+1}^\star - u_{\ell+1}^{0,\underline{j}} |\mkern-1.5mu|\mkern-1.5mu| \le
		|\mkern-1.5mu|\mkern-1.5mu| u_{\ell+1}^\star - u_\ell^{\underline{k},\underline{j}} |\mkern-1.5mu|\mkern-1.5mu|.
	\end{equation}
	The combination of the previous two displayed formulas shows
	\begin{equation}\label{eq:step1:1}
		|\mkern-1.5mu|\mkern-1.5mu| u_{\ell+1}^\star - u_{\ell+1}^{\underline{k},\underline{j}} |\mkern-1.5mu|\mkern-1.5mu|
		\stackrel{\mathclap{\eqref{eq:step1:2*}}}\le
		3 \, |\mkern-1.5mu|\mkern-1.5mu| u_{\ell+1}^\star - u_{\ell+1}^{\underline{k}-1,\underline{j}} |\mkern-1.5mu|\mkern-1.5mu|
		\stackrel{\mathclap{\eqref{eq:step1:3*}}}\le
		3 \, |\mkern-1.5mu|\mkern-1.5mu| u_{\ell+1}^\star - u_\ell^{\underline{k},\underline{j}} |\mkern-1.5mu|\mkern-1.5mu|.
	\end{equation}
	Analogous arguments to~\eqref{eq:single:estimator-reduction} in the proof of Theorem~\ref{theorem:exact:convergence} establish
	\begin{equation}\label{eq:step2:1}
		\eta_{\ell+1}(u_{\ell+1}^{\underline{k},\underline{j}})
		\stackrel{\eqref{eq:single:estimator-reduction}}\le
		q_\theta \, \eta_\ell(u_\ell^{\underline{k},\underline{j}}) + C_{\textup{stab}} \, |\mkern-1.5mu|\mkern-1.5mu| u_{\ell+1}^{\underline{k},\underline{j}} - u_\ell^{\underline{k},\underline{j}} |\mkern-1.5mu|\mkern-1.5mu|
		\stackrel{\eqref{eq:step1:1}}\le
		q_\theta \, \eta_\ell(u_\ell^{\underline{k}, \underline{j}}) + 4 C_{\textup{stab}} \,
		|\mkern-1.5mu|\mkern-1.5mu| u_{\ell+1}^\star -
		u_\ell^{\underline{k},\underline{j}} |\mkern-1.5mu|\mkern-1.5mu|.
	\end{equation}

	\textbf{Step~2 (tail summability with respect to $\boldsymbol{\ell}$).}
	With $\lambda \coloneqq \lambda_{\textup{alg}}\lambda_{\textup{sym}}$, we define
	\begin{equation*}
		\gamma \coloneqq \frac{q_\theta(1-q_{\textup{sym}}^\star)}{4 \, C_{\textup{stab}}},
		\quad
		C(\gamma,\lambda) \coloneqq 1 + \frac{2 \, q_{\textup{alg}}}{1 - q_{\textup{alg}}} \, \frac{\lambda}{\gamma},
		\quad \text{and} \quad
		\alpha \coloneqq \frac{\lambda}{\gamma} \stackrel{\eqref{eq:double:assumption:lambda}} < \frac{(1-q_{\textup{alg}})(1-q_\theta)}{2 \, q_{\textup{alg}} q_\theta}.
	\end{equation*}
	By definition, it follows that
	\begin{equation*}
		C(\gamma,\lambda)
		=
		1 + \frac{2 \, q_{\textup{alg}}}{1 - q_{\textup{alg}}} \, \alpha
		<
		1 + \frac{1-q_\theta}{q_\theta} = 1 / q_\theta.
	\end{equation*}
	This ensures that
	\begin{equation}\label{eq:double:constants}
		q_\theta \, C(\gamma,\lambda) < 1
		\quad \text{as well as}  \quad
		q_{\textup{sym}}^\star + 4 \, C_{\textup{stab}} C(\gamma,\lambda) \, \gamma
		<
		q_{\textup{sym}}^\star + \frac{4 \, C_{\textup{stab}}}{q_\theta} \, \gamma = 1.
	\end{equation}
	With contraction of the inexact Zarantonello iteration~\eqref{eq2:zarantonello:inexact}, Step~1 proves
	\begin{align}\label{eq:step3:1}
		\begin{split}
			 & |\mkern-1.5mu|\mkern-1.5mu| u_{\ell+1}^\star - u_{\ell+1}^{\underline{k},\underline{j}} |\mkern-1.5mu|\mkern-1.5mu| + \gamma \,
			\eta_{\ell+1}(u_{\ell+1}^{\underline{k},\underline{j}})
			\stackrel{\eqref{eq2:zarantonello:inexact}} \le
			q_{\textup{sym}}^\star \, |\mkern-1.5mu|\mkern-1.5mu| u_{\ell+1}^\star - u_{\ell+1}^{\underline{k}-1,\underline{j}} |\mkern-1.5mu|\mkern-1.5mu|
			+ C(\gamma,\lambda) \, \gamma \, \eta_{\ell+1}(u_{\ell+1}^{\underline{k},\underline{j}})
			\\& \quad
			\stackrel{\mathclap{\eqref{eq:step1:3*}}}\le
			q_{\textup{sym}}^\star \, |\mkern-1.5mu|\mkern-1.5mu| u_{\ell+1}^\star - u_\ell^{\underline{k},\underline{j}} |\mkern-1.5mu|\mkern-1.5mu|
			+C(\gamma,\lambda) \, \gamma \, \eta_{\ell+1}(u_{\ell+1}^{\underline{k},\underline{j}})
			\\& \quad
			\stackrel{\mathclap{\eqref{eq:step2:1}}}\le
			\bigl( q_{\textup{sym}}^\star + 4 \, C_{\textup{stab}} \, C(\gamma,\lambda) \, \gamma \bigr) \,
			|\mkern-1.5mu|\mkern-1.5mu| u_{\ell+1}^\star - u_\ell^{\underline{k},\underline{j}} |\mkern-1.5mu|\mkern-1.5mu|
			+ q_\theta \, C(\gamma,\lambda) \, \gamma \, \eta_\ell(u_\ell^{\underline{k},\underline{j}})
			\\& \quad
			\le
			q_{\textup{ctr}} \,
			\bigl[ |\mkern-1.5mu|\mkern-1.5mu| u_{\ell+1}^\star - u_\ell^{\underline{k},\underline{j}} |\mkern-1.5mu|\mkern-1.5mu| + \gamma \,
				\eta_\ell(u_\ell^{\underline{k},\underline{j}}) \bigr]
			\quad \text{for all } (\ell+1,\underline{k},\underline{j}) \in \mathcal{Q},
		\end{split}
	\end{align}
	where~\eqref{eq:double:constants} ensures the bound
	\begin{equation*}
		0 < q_{\textup{ctr}}
		\coloneqq
		\max \bigl\{ q_{\textup{sym}}^\star + 4 \, C_{\textup{stab}} \, C(\gamma,\lambda) \, \gamma \,,\, q_\theta \, C(\gamma,\lambda) \bigr\} < 1.
	\end{equation*}
	Altogether, we obtain
	\begin{align*}
		a_{\ell+1}
		\coloneqq
		|\mkern-1.5mu|\mkern-1.5mu| u_{\ell+1}^\star - u_{\ell+1}^{\underline{k},\underline{j}} |\mkern-1.5mu|\mkern-1.5mu| + \gamma \,
		\eta_{\ell+1}(u_{\ell+1}^{\underline{k},\underline{j}})
		 & \, \stackrel{\mathclap{\eqref{eq:step3:1}}}\le \,
		q_{\textup{ctr}} \,
		\bigl[ |\mkern-1.5mu|\mkern-1.5mu| u_\ell^\star - u_\ell^{\underline{k},\underline{j}} |\mkern-1.5mu|\mkern-1.5mu| + \gamma \,
			\eta_\ell(u_\ell^{\underline{k},\underline{j}}) \bigr]
		+ q_{\textup{ctr}} \, |\mkern-1.5mu|\mkern-1.5mu| u_{\ell+1}^\star - u_\ell^\star |\mkern-1.5mu|\mkern-1.5mu|
		\\&
		\, \eqqcolon \,
		q_{\textup{ctr}} \, a_\ell + b_\ell
		\quad \text{for all } (\ell,\underline{k},\underline{j}) \in \mathcal{Q},
	\end{align*}
	which corresponds to~\eqref{eq:a_ell:perturbed-contraction} in the case of a
	single contractive solver (with $u_\ell^{\underline{k},\underline{j}}$ replacing $u_\ell^{\underline{k}}$ in~\eqref{eq:a_ell:perturbed-contraction}).
	Together with~\eqref{eq:cea+reliability}--\eqref{eq:single:orthogonality} (with $u_\ell^{\underline{k},\underline{j}}$ replacing $u_\ell^{\underline{k}}$), the
	assumptions~\eqref{eq:summability:criterion} of
	Lemma~\ref{lemma:summability:criterion} are satisfied. Therefore,
	Lemma~\ref{lemma:summability:criterion} proves tail summability
	\begin{align*}
		 & \sum_{\ell' = \ell+1}^{\underline{\ell}-1} \mathrm{H}_{\ell'}^{\underline{k}}
		\stackrel{\eqref{eq1:double:proof}}\simeq
		\sum_{\ell' = \ell+1}^{\underline{\ell}-1} \bigl[ |\mkern-1.5mu|\mkern-1.5mu| u_{\ell'}^\star -
			u_{\ell'}^{\underline{k},\underline{j}} |\mkern-1.5mu|\mkern-1.5mu| + \gamma \, \eta_{\ell'}(u_{\ell'}^{\underline{k},\underline{j}}) \bigr]
		\\& \qquad\qquad\qquad
		\lesssim \,
		|\mkern-1.5mu|\mkern-1.5mu| u_\ell^\star - u_\ell^{\underline{k},\underline{j}} |\mkern-1.5mu|\mkern-1.5mu| + \gamma \, \eta_\ell(u_\ell^{\underline{k},\underline{j}})
		\stackrel{\eqref{eq1:double:proof}}\simeq
		\mathrm{H}_\ell^{\underline{k}}
		\quad \text{for all }  (\ell,\underline{k},\underline{j}) \in \mathcal{Q}.
	\end{align*}

	\textbf{Step~3 (auxiliary estimates).}
	First, we employ~\eqref{eq:step1:2*} to deduce
	\begin{align}\label{eq1:step7}
		\begin{split}
			\mathrm{H}_\ell^{\underline{k}}
			\  & \stackrel{\mathclap{\eqref{axiom:stability}}}\lesssim \
			|\mkern-1.5mu|\mkern-1.5mu| u_\ell^\star - u_\ell^{\underline{k},\underline{j}} |\mkern-1.5mu|\mkern-1.5mu|
			+ |\mkern-1.5mu|\mkern-1.5mu| u_\ell^{\underline{k},\underline{j}} - u_\ell^{\underline{k}-1,\underline{j}} |\mkern-1.5mu|\mkern-1.5mu|
			+ \eta_\ell(u_\ell^{\underline{k}-1,\underline{j}})
			\stackrel{\eqref{eq1:double:proof}}\le
			\mathrm{H}_\ell^{\underline{k}-1} + 2 \, |\mkern-1.5mu|\mkern-1.5mu| u_\ell^{\underline{k},\underline{j}} - u_\ell^{\underline{k}-1,\underline{j}} |\mkern-1.5mu|\mkern-1.5mu|
			\\&
			\stackrel{\mathclap{\eqref{eq:step1:2*}}}\le \
			\mathrm{H}_\ell^{\underline{k}-1} + 8 \, |\mkern-1.5mu|\mkern-1.5mu| u_\ell^\star - u_\ell^{\underline{k}-1,\underline{j}} |\mkern-1.5mu|\mkern-1.5mu|
			\le 9 \, \mathrm{H}_\ell^{\underline{k}-1}
			\quad \text{for all } (\ell,\underline{k},\underline{j}) \in \mathcal{Q}.
		\end{split}
	\end{align}
	Second, for $0 \le k < k' < \underline{k}[\ell]$, the failure of the stopping criterion for the inexact Zarantonello symmetrization~\eqref{eq:double:termination:sym} and contraction~\eqref{eq1:zarantonello:inexact} prove that
	\begin{equation}\label{eq1a:step7}
		\mathrm{H}_\ell^{k'}
		\stackrel{\eqref{eq:double:termination:sym}}\lesssim
		|\mkern-1.5mu|\mkern-1.5mu| u_\ell^\star - u_\ell^{k',\underline{j}} |\mkern-1.5mu|\mkern-1.5mu| +
		|\mkern-1.5mu|\mkern-1.5mu| u_\ell^{k',\underline{j}} - u_\ell^{k'-1,\underline{j}} |\mkern-1.5mu|\mkern-1.5mu|
		\stackrel{\eqref{eq1:zarantonello:inexact}}\lesssim
		|\mkern-1.5mu|\mkern-1.5mu| u_\ell^\star - u_\ell^{k'-1,\underline{j}} |\mkern-1.5mu|\mkern-1.5mu|
		\stackrel{\eqref{eq1:zarantonello:inexact}}\lesssim
		q_{\textup{sym}}^{k'-k} \, |\mkern-1.5mu|\mkern-1.5mu| u_\ell^\star - u_\ell^{k,\underline{j}} |\mkern-1.5mu|\mkern-1.5mu|.
	\end{equation}
	Moreover, for $k < k' = \underline{k}[\ell]$, we
	combine~\eqref{eq1:step7} with~\eqref{eq1a:step7} to get
	\begin{equation}\label{eq1b:step7}
		\mathrm{H}_\ell^{\underline{k}}
		\stackrel{\eqref{eq1:step7}}\lesssim \mathrm{H}_\ell^{\underline{k}[\ell]-1}
		\stackrel{\eqref{eq1a:step7}}\lesssim q_{\textup{sym}}^{(\underline{k}[\ell]-1)-k} \, |\mkern-1.5mu|\mkern-1.5mu| u_\ell^\star - u_\ell^{k,\underline{j}} |\mkern-1.5mu|\mkern-1.5mu|
		\simeq q_{\textup{sym}}^{\underline{k}[\ell]-k} \,
		|\mkern-1.5mu|\mkern-1.5mu| u_\ell^\star - u_\ell^{k,\underline{j}} |\mkern-1.5mu|\mkern-1.5mu|.
	\end{equation}
	The combination of~\eqref{eq1a:step7}--\eqref{eq1b:step7} proves
	that
	\begin{equation}\label{eq2:step7}
		\boxed{
			\mathrm{H}_\ell^{k'}
			\lesssim q_{\textup{sym}}^{\,k' - k} \,|\mkern-1.5mu|\mkern-1.5mu| u_\ell^\star - u_\ell^{k,\underline{j}} |\mkern-1.5mu|\mkern-1.5mu|
			\lesssim q_{\textup{sym}}^{\,k' - k} \, \mathrm{H}_\ell^{k}
			\quad \text{for all } (\ell,0, 0) \in \mathcal{Q} \text{ with } 0 \le k \le
			k' \le \underline{k}[\ell],
		}
	\end{equation}
	where the hidden constant depends only on $C_{\textup{stab}}$, $\lambda_{\textup{sym}}$, and $q_{\textup{sym}}$.
	Third, we recall
	\begin{equation*}
		|\mkern-1.5mu|\mkern-1.5mu| u_\ell^\star - u_{\ell-1}^\star |\mkern-1.5mu|\mkern-1.5mu|
		\stackrel{\eqref{eq:quasimonotonicity_b}}
		\lesssim
		\eta_{\ell-1}(u_{\ell-1}^\star)
		\stackrel{\eqref{axiom:stability}}
		\lesssim
		\eta_{\ell-1}(u_{\ell-1}^{\underline{k},\underline{j}}) + |\mkern-1.5mu|\mkern-1.5mu| u_{\ell-1}^\star -
		u_{\ell-1}^{\underline{k},\underline{j}} |\mkern-1.5mu|\mkern-1.5mu|
		=
		\mathrm{H}_{\ell-1}^{\underline{k}}.
	\end{equation*}
	Together with nested iteration $u_{\ell-1}^{\underline{k},\underline{j}} = u_\ell^{0,\underline{j}}$, this yields that
	\begin{equation}\label{eq3:step7}
		\boxed{
		\mathrm{H}_\ell^0
		= |\mkern-1.5mu|\mkern-1.5mu| u_\ell^\star - u_{\ell-1}^{\underline{k},\underline{j}} |\mkern-1.5mu|\mkern-1.5mu| + \eta_\ell(u_{\ell-1}^{\underline{k},\underline{j}})
		\le |\mkern-1.5mu|\mkern-1.5mu| u_\ell^\star - u_{\ell-1}^\star |\mkern-1.5mu|\mkern-1.5mu| + \mathrm{H}_{\ell-1}^{\underline{k}}
		\lesssim
		\mathrm{H}_{\ell-1}^{\underline{k}}
		\quad \text{for all } (\ell,0,0) \in \mathcal{Q}.
		}
	\end{equation}

	\textbf{Step~4 (tail summability with respect to $\boldsymbol{\ell}$ and $\boldsymbol{k}$).}
	The auxiliary estimates from Step~3 and the geometric series prove that
	\begin{align}\label{eq:step8}
		\begin{split}
			 & \sum_{\substack{(\ell',k',\underline{j}) \in \mathcal{Q} \\ |\ell',k',\underline{j}| > |\ell,k,\underline{j}|}} \mathrm{H}_{\ell'}^{k'}
			=
			\sum_{k' = k+1}^{\underline{k}[\ell]} \mathrm{H}_\ell^{k'}
			+ \sum_{\ell' = \ell+1}^{\underline{\ell}} \sum_{k'=0}^{\underline{k}[\ell]} \mathrm{H}_{\ell'}^{k'}
			\stackrel{\eqref{eq2:step7}}\lesssim
			\mathrm{H}_\ell^k
			+ \sum_{\ell' = \ell+1}^{\underline{\ell}} \mathrm{H}_{\ell'}^0
			\\& \qquad
			\stackrel{\eqref{eq3:step7}}\lesssim
			\mathrm{H}_\ell^k
			+ \sum_{\ell' = \ell}^{\underline{\ell}-1} \mathrm{H}_{\ell'}^{\underline{k}}
			\lesssim
			\mathrm{H}_\ell^k
			+ \mathrm{H}_\ell^{\underline{k}}
			\stackrel{\eqref{eq2:step7}}\lesssim
			\mathrm{H}_\ell^k
			\quad \text{for all } (\ell,k,\underline{j}) \in \mathcal{Q}.
		\end{split}
	\end{align}

	\textbf{Step~5 (auxiliary estimates).}
	Recall $\mathrm{H}_\ell^{\underline{k}} \le \mathrm{H}_\ell^{\underline{k},\underline{j}}$ from~\eqref{eq2:double:proof}.
	For $j=0$ and $k=0$, the definition $u_\ell^{0,0} \coloneqq u_\ell^{0,\underline{j}} \coloneqq u_\ell^{0,\star}$ leads to $\mathrm{H}_\ell^{0,0} = \mathrm{H}_\ell^0$. For $k \ge 1$, nested iteration $u_\ell^{k,0} = u_\ell^{k-1,\underline{j}}$ and contraction of the Zarantonello iteration~\eqref{eq:zarantonello} imply
	\begin{equation*}
		|\mkern-1.5mu|\mkern-1.5mu| u_\ell^{k,\star} - u_\ell^{k,0} |\mkern-1.5mu|\mkern-1.5mu|
		\le
		|\mkern-1.5mu|\mkern-1.5mu| u_\ell^\star - u_\ell^{k,\star} |\mkern-1.5mu|\mkern-1.5mu|
		+ |\mkern-1.5mu|\mkern-1.5mu| u_\ell^\star - u_\ell^{k-1,\underline{j}} |\mkern-1.5mu|\mkern-1.5mu|
		\stackrel{\eqref{eq:zarantonello}}\le
		(q_{\textup{sym}}^\star + 1) \, |\mkern-1.5mu|\mkern-1.5mu| u_\ell^\star - u_\ell^{k-1,\underline{j}} |\mkern-1.5mu|\mkern-1.5mu|
		\le 2 \, \mathrm{H}_\ell^{k-1}.
	\end{equation*}
	Therefore, we derive that
	\begin{equation}\label{eq1:step9}
		\boxed{
			\mathrm{H}_\ell^{k,0} \le 3 \, \mathrm{H}_\ell^{(k-1)_+}
			\quad \text{for all } (\ell,k,0) \in \mathcal{Q},
			\quad \text{where } (k-1)_+ \coloneqq \max\{0, k-1 \}.
		}
	\end{equation}
	For any $0 \le j < j' < \underline{j}[\ell,k]$,
	the contraction of the Zarantonello
	iteration~\eqref{eq:zarantonello}, the contraction of the algebraic solver~\eqref{eq:double:contractive-solver}, and the failure of the stopping criterion for the algebraic solver~\eqref{eq:double:termination:alg} prove
	\begin{align*}
		\begin{split}
			\mathrm{H}_\ell^{k, j'}\
			 & \le \
			|\mkern-1.5mu|\mkern-1.5mu| u_\ell^\star - u_\ell^{k,\star} |\mkern-1.5mu|\mkern-1.5mu|
			+ 2 \, |\mkern-1.5mu|\mkern-1.5mu|u_\ell^{k,\star} - u_\ell^{k, j'} |\mkern-1.5mu|\mkern-1.5mu|
			+ \eta_\ell(u_\ell^{k,j'})
			\\&
			\stackrel{\mathclap{\eqref{eq:zarantonello}}}\lesssim \
			|\mkern-1.5mu|\mkern-1.5mu| u_\ell^{k,j'} - u_\ell^{k-1, \underline{j}} |\mkern-1.5mu|\mkern-1.5mu|
			+
			|\mkern-1.5mu|\mkern-1.5mu| u_\ell^{k,\star} - u_\ell^{k,j'} |\mkern-1.5mu|\mkern-1.5mu|
			+ \eta_\ell(u_\ell^{k,j'})
			\\&
			\stackrel{\mathclap{\eqref{eq:double:contractive-solver}}}\lesssim \
			|\mkern-1.5mu|\mkern-1.5mu| u_\ell^{k,j'} - u_\ell^{k-1, \underline{j}} |\mkern-1.5mu|\mkern-1.5mu| +
			|\mkern-1.5mu|\mkern-1.5mu| u_\ell^{k,j'} - u_\ell^{k, j'-1} |\mkern-1.5mu|\mkern-1.5mu| +
			\eta_\ell(u_\ell^{k,j'})
			\\&
			\stackrel{\mathclap{\eqref{eq:double:termination:alg}}}\lesssim \
			|\mkern-1.5mu|\mkern-1.5mu| u_\ell^{k,j'} - u_\ell^{k, j'-1} |\mkern-1.5mu|\mkern-1.5mu|
			\stackrel{\eqref{eq:double:contractive-solver}}\lesssim
			|\mkern-1.5mu|\mkern-1.5mu| u_\ell^{k,\star} - u_\ell^{k, j'-1} |\mkern-1.5mu|\mkern-1.5mu|
			\stackrel{\eqref{eq:double:contractive-solver}} \lesssim \
			q_{\textup{alg}}^{j'-j} \, |\mkern-1.5mu|\mkern-1.5mu| u_\ell^{k,\star} - u_\ell^{k, j} |\mkern-1.5mu|\mkern-1.5mu|
			\lesssim
			q_{\textup{alg}}^{j'-j} \, \mathrm{H}_\ell^{k,j}.
		\end{split}
	\end{align*}
	For $j' = \underline{j}[\ell,k]$, it follows that
	\begin{equation*}
		\mathrm{H}_\ell^{k,\underline{j}}
		\stackrel{\eqref{axiom:stability}}\lesssim
		\mathrm{H}_\ell^{k,\underline{j}-1} +
		|\mkern-1.5mu|\mkern-1.5mu| u_\ell^{k,\underline{j}} - u_\ell^{k,\underline{j}-1} |\mkern-1.5mu|\mkern-1.5mu|
		\stackrel{\mathclap{\eqref{eq:double:contractive-solver}}}\lesssim
		\mathrm{H}_\ell^{k,\underline{j}-1} +
		|\mkern-1.5mu|\mkern-1.5mu| u_\ell^{k,\star} - u_\ell^{k,\underline{j}-1} |\mkern-1.5mu|\mkern-1.5mu|
		\stackrel{\mathclap{\eqref{eq:double:quasi-error}}}\le
		2 \, \mathrm{H}_\ell^{k,\underline{j}-1}
		\lesssim
		q_{\textup{alg}}^{\underline{j}[\ell, k]-j} \, \mathrm{H}_\ell^{k,j}.
	\end{equation*}
	The combination of the previous two displayed formulas results in
	\begin{equation}\label{eq3:step10}
		\boxed{
			\mathrm{H}_\ell^{k, j'}
			\lesssim
			q_{\textup{alg}}^{j'- j} \, \mathrm{H}_\ell^{k,j}
			\quad \text{for all } (\ell, k, 0) \in \mathcal{Q} \quad \text{with} \quad
			0 \le
			j \le j' \le \underline{j}[\ell,k],
		}
	\end{equation}
	where the hidden constant depends only on $q_{\textup{sym}}^\star$, $\lambda_{\textup{sym}}$, $q_{\textup{alg}}$, $\lambda_{\textup{alg}}$, and $C_{\textup{stab}}$.

	\medskip
	\textbf{Step~6 (tail summability with respect to $\boldsymbol{\ell}$, $\boldsymbol{k}$, and $\boldsymbol{j}$).}
	Finally, we observe that
	\begin{align*}
		 & \sum_{\substack{(\ell',k',j') \in \mathcal{Q}           \\ |\ell',k',j'| > |\ell,k,j|}}  \mathrm{H}_{\ell'}^{k',j'}
		=
		\sum_{j'=j+1}^{\underline{j}[\ell,k]} \mathrm{H}_{\ell}^{k,j'}
		+ \sum_{k'=k+1}^{\underline{k}[\ell]} \sum_{j'=0}^{\underline{j}[\ell,k']} \mathrm{H}_\ell^{k',j'}
		+ \sum_{\ell' = \ell+1}^{\underline{\ell}} \sum_{k'=0}^{\underline{k}[\ell']} \sum_{j'=0}^{\underline{j}[\ell',k']} \mathrm{H}_{\ell'}^{k',j'}
		\\& \qquad
		\stackrel{\eqref{eq3:step10}}\lesssim
		\mathrm{H}_{\ell}^{k,j}
		+ \sum_{k'=k+1}^{\underline{k}[\ell]} \mathrm{H}_\ell^{k',0}
		+ \sum_{\ell' = \ell+1}^{\underline{\ell}} \sum_{k'=0}^{\underline{k}[\ell]} \mathrm{H}_{\ell'}^{k',0}
		\stackrel{\eqref{eq1:step9}}\lesssim
		\mathrm{H}_{\ell}^{k,j}
		+ \sum_{\substack{(\ell',k',\underline{j}) \in \mathcal{Q} \\ |\ell',k',\underline{j}| > |\ell,k,\underline{j}|}} \mathrm{H}_{\ell'}^{k'}
		\\& \qquad
		\stackrel{\eqref{eq:step8}}\lesssim
		\mathrm{H}_\ell^{k,j}
		+ \mathrm{H}_\ell^{k}
		\stackrel{\eqref{eq2:double:proof}}\lesssim
		\mathrm{H}_\ell^{k,j}
		+ \mathrm{H}_\ell^{k,\underline{j}}
		\stackrel{\eqref{eq3:step10}}\lesssim
		\mathrm{H}_\ell^{k,j}
		\quad \text{for all } (\ell,k,j) \in \mathcal{Q}.
	\end{align*}
	Since $\mathcal{Q}$ is countable and linearly ordered, Lemma~\ref{lemma:summability} concludes the proof of~\eqref{eq:double:convergence}.
\end{proof}

The final theorem, following from~\cite[Theorem~4.3]{bhimps2023b}, states that for sufficiently small adaptivity parameters \(\theta\),
\(\lambda_{\textup{sym}}\), and \(\lambda_{\textup{alg}}\), Algorithm~\ref{algorithm:double} achieves optimal complexity.

\begin{theorem}[Optimal complexity of Algorithm~\ref{algorithm:double}, {\cite[Theorem~4.3]{bhimps2023b}}]\label{theorem:double:complexity}
	Suppose that the estimator satisfies the axioms of adaptivity~\eqref{axiom:stability}--\eqref{axiom:discrete_reliability}
	and suppose that quasi-orthogonality~\eqref{axiom:orthogonality} holds. Assume
	full R-linear convergence from Theorem~\ref{theorem:double:convergence}.
	Define the constants \(\theta^\star\), \(\lambda_{\textup{sym}}^\star\) by
	\begin{equation*}
		\begin{aligned}
			\theta^\star                 & \coloneqq \bigl( 1 + C_{\textup{stab}}^2 \, C_{\textup{drel}}^2 \bigr)^{-1},
			\\
			\lambda_{\textup{sym}}^\star & \coloneqq \min\{ 1, C_{\textup{stab}}^{-1} \, C_{\textup{alg}}^{-1}\}
			\quad \text{with} \quad
			C_{\textup{alg}} \coloneqq \frac{1}{1-q_{\textup{sym}}^\star} \Bigl( \frac{2 \, q_{\textup{alg}}}{1 - q_{\textup{alg}}} \, \lambda_{\textup{alg}}^\star + q_{\textup{sym}}^\star \Bigr).
		\end{aligned}
	\end{equation*}
	Suppose that the constants \(\theta\), \(\lambda_{\textup{sym}}\), and \(\lambda_{\textup{alg}}\) are
	sufficiently small in the sense that, additionally to~\eqref{eq:double:assumption:lambda},
	there holds
	\begin{equation*}
		0 < \lambda_{\textup{sym}} < \lambda_{\textup{sym}}^\star
		\quad \text{and} \quad
		0 < \frac{\bigl(\theta^{1/2} + \lambda_{\textup{sym}} / \lambda_{\textup{sym}}^\star\bigr)^2}{\bigl(1 - \lambda_{\textup{sym}} / \lambda_{\textup{sym}}^\star\bigr)^2}
		< \theta^\star < 1.
	\end{equation*}
	Then, Algorithm~\ref{algorithm:double} guarantees for all \( s > 0 \)
	\begin{equation*}
		c_{\textup{opt}} \| u^\star \|_{\mathbb{A}_s}
		\le
		\sup_{(\ell,k,j) \in \mathcal{Q}}
		\Bigl(
		\sum \limits_{\substack{(\ell',k',j') \in \mathcal{Q} \\ |\ell',k',j'| \le |\ell,k,j|}}
		\# \mathcal{T}_{\ell'}
		\Bigr)^{s}
		\, \mathrm{H}_\ell^{k,j}
		\le
		C_{\textup{opt}} \max \{ \| u^\star \|_{\mathbb{A}_s}, \mathrm{H}_0^{0,0} \}.
	\end{equation*}
	The constant \(c_{\textup{opt}} > 0\) depends only on~\(C_{\textup{stab}}\), the use of NVB refinement, and \(s\), while \( C_{\textup{opt}} > 0 \) depends only on~\(C_{\textup{stab}}\), \(q_{\textup{red}}\), \(C_{\textup{drel}}\), \(C_{\textup{lin}}\), \(q_{\textup{lin}}\), \(\# \mathcal{T}_0\), \(\lambda_{\textup{sym}}\), \(q_{\textup{sym}}^\star\), \(\lambda_{\textup{alg}}\), \(q_{\textup{alg}}\), \(\theta\), \(s\), and the use of NVB refinement. \qed
\end{theorem}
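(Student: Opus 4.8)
The plan is to reduce the assertion to optimal rates with respect to the number of elements and then invoke the machinery already available in the paper. Since full R-linear convergence of $\mathrm{H}_\ell^{k,j}$ is assumed (Theorem~\ref{theorem:double:convergence}), Lemma~\ref{lemma:complexity:sequence}, applied along the lexicographically ordered sequence $\mathcal{Q}$ with $a_{|\ell,k,j|}\coloneqq\mathrm{H}_\ell^{k,j}$ and $t_{|\ell,k,j|}\coloneqq\#\mathcal{T}_\ell$ (the son estimate $\#\mathcal{T}_{\ell+1}\le C_{\textup{son}}\#\mathcal{T}_\ell$ for NVB playing the role of $t_{\ell+1}\le C_2 t_\ell$), already yields $\sup_{(\ell,k,j)\in\mathcal{Q}}\big(\sum_{|\ell',k',j'|\le|\ell,k,j|}\#\mathcal{T}_{\ell'}\big)^s\mathrm{H}_\ell^{k,j}\simeq\sup_{(\ell,k,j)\in\mathcal{Q}}(\#\mathcal{T}_\ell)^s\mathrm{H}_\ell^{k,j}$ with constants depending only on $C_{\textup{lin}},q_{\textup{lin}},s$. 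Hence it suffices to prove $\sup_{(\ell,k,j)\in\mathcal{Q}}(\#\mathcal{T}_\ell)^s\mathrm{H}_\ell^{k,j}\le C\max\{\|u^\star\|_{\mathbb{A}_s},\mathrm{H}_0^{0,0}\}$ together with the matching lower bound $\|u^\star\|_{\mathbb{A}_s}\lesssim\sup_{(\ell,k,j)\in\mathcal{Q}}(\#\mathcal{T}_\ell)^s\mathrm{H}_\ell^{k,j}$. The lower bound is the standard argument from the definition of $\mathbb{A}_s$: given $N\in\mathbb{N}_0$, pick the largest level $\ell$ with $\#\mathcal{T}_\ell-\#\mathcal{T}_0\le N$; then $\mathcal{T}_\ell\in\mathbb{T}_N$, so $\min_{\mathcal{T}_{\textup{opt}}\in\mathbb{T}_N}\eta_{\textup{opt}}(u^\star_{\textup{opt}})\le\eta_\ell(u_\ell^\star)\lesssim\mathrm{H}_\ell^{\underline{k},\underline{j}}$ by stability~\eqref{axiom:stability} (passing from $u_\ell^{\underline{k},\underline{j}}$ to $u_\ell^\star$ via the stopping criteria, see below), while the son estimate gives $(N+1)^s\lesssim(\#\mathcal{T}_{\ell+1})^s\lesssim(\#\mathcal{T}_\ell)^s$; taking the supremum over $N$ concludes the lower bound (the case $\underline{\ell}<\infty$ is trivial since then $\eta_{\textup{opt}}(u^\star_{\textup{opt}})=0$ for $N$ large).

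For the upper bound the only genuinely new ingredient compared with the classical analyses \cite{stevenson2007,ckns2008,cfpp2014} is that Dörfler marking is performed at the \emph{inexact} iterate $u_\ell^{\underline{k},\underline{j}}$. First I would combine the two nested stopping criteria~\eqref{eq:double:termination:sym}--\eqref{eq:double:termination:alg} with the contraction of the inexact Zarantonello iteration (Lemma~\ref{lem:inexact_contraction}) and of the algebraic solver~\eqref{eq:double:contractive-solver} to derive the a~posteriori bound $|\mkern-1.5mu|\mkern-1.5mu| u_\ell^\star-u_\ell^{\underline{k},\underline{j}}|\mkern-1.5mu|\mkern-1.5mu|\le C_{\textup{alg}}\,\lambda_{\textup{sym}}\,\eta_\ell(u_\ell^{\underline{k},\underline{j}})$ with $C_{\textup{alg}}$ as in the statement (this is where $q_{\textup{alg}},q_{\textup{sym}}^\star,\lambda_{\textup{alg}}$ enter). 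Stability~\eqref{axiom:stability} then gives $|\,\eta_\ell(u_\ell^\star)-\eta_\ell(u_\ell^{\underline{k},\underline{j}})\,|\le C_{\textup{stab}}C_{\textup{alg}}\,\lambda_{\textup{sym}}\,\eta_\ell(u_\ell^{\underline{k},\underline{j}})$, i.e.\ $\eta_\ell(u_\ell^\star)\simeq\eta_\ell(u_\ell^{\underline{k},\underline{j}})$ with constants $1\pm\lambda_{\textup{sym}}/\lambda_{\textup{sym}}^\star$ (using $\lambda_{\textup{sym}}<\lambda_{\textup{sym}}^\star=\min\{1,C_{\textup{stab}}^{-1}C_{\textup{alg}}^{-1}\}$). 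Applying~\eqref{axiom:stability} elementwise on $\mathcal{M}_\ell$, the Dörfler property $\theta\,\eta_\ell(u_\ell^{\underline{k},\underline{j}})^2\le\eta_\ell(\mathcal{M}_\ell,u_\ell^{\underline{k},\underline{j}})^2$ for the inexact iterate transfers to a Dörfler property $\widetilde\theta\,\eta_\ell(u_\ell^\star)^2\le\eta_\ell(\mathcal{M}_\ell,u_\ell^\star)^2$ for the exact discrete solution with the perturbed parameter $\widetilde\theta\coloneqq(\theta^{1/2}+\lambda_{\textup{sym}}/\lambda_{\textup{sym}}^\star)^2/(1-\lambda_{\textup{sym}}/\lambda_{\textup{sym}}^\star)^2$, which the hypothesis forces to satisfy $\widetilde\theta<\theta^\star=(1+C_{\textup{stab}}^2C_{\textup{drel}}^2)^{-1}$.

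From here the proof follows the Stevenson/CKNS template verbatim. Since $\widetilde\theta<\theta^\star$, the optimality of Dörfler marking applies: for a near-optimal $\mathcal{T}_{\textup{opt}}\in\mathbb{T}_N$ with $\min_{\mathbb{T}_N}\eta_{\textup{opt}}(u^\star_{\textup{opt}})\le(N+1)^{-s}\|u^\star\|_{\mathbb{A}_s}$, discrete reliability~\eqref{axiom:discrete_reliability} shows that the refined set $\mathcal{T}_\ell\setminus(\mathcal{T}_\ell\oplus\mathcal{T}_{\textup{opt}})$ (coarsest common refinement) meets the exact-solution Dörfler criterion with parameter $\ge\theta^\star>\widetilde\theta$; quasi-minimality of $\mathcal{M}_\ell$ (constant $C_{\textup{mark}}$) together with the overlay estimate then yields $\#\mathcal{M}_\ell\lesssim\#(\mathcal{T}_\ell\setminus\mathcal{T}_{\textup{opt}})\lesssim\|u^\star\|_{\mathbb{A}_s}^{1/s}\,\eta_\ell(u_\ell^\star)^{-1/s}$ whenever $\|u^\star\|_{\mathbb{A}_s}<\infty$. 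Inserting this into the NVB mesh-closure estimate $\#\mathcal{T}_L-\#\mathcal{T}_0\lesssim\sum_{\ell=0}^{L-1}\#\mathcal{M}_\ell$ (see \cite{stevenson2008}) and summing the geometric series produced by R-linear convergence of $\eta_\ell(u_\ell^\star)$ (Theorem~\ref{theorem:exact:convergence}, equivalently $\mathrm{H}_\ell^{\underline{k},\underline{j}}\simeq\eta_\ell(u_\ell^\star)$ from Theorem~\ref{theorem:double:convergence}) gives $(\#\mathcal{T}_L-\#\mathcal{T}_0)^s\,\eta_L(u_L^\star)\lesssim\|u^\star\|_{\mathbb{A}_s}$, hence $(\#\mathcal{T}_L)^s\,\eta_L(u_L^\star)\lesssim\max\{\|u^\star\|_{\mathbb{A}_s},(\#\mathcal{T}_0)^s\eta_0(u_0^\star)\}$. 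Finally, full R-linear convergence with $\mathrm{H}_\ell^{k,j}\lesssim\mathrm{H}_\ell^0\lesssim\mathrm{H}_{\ell-1}^{\underline{k},\underline{j}}\simeq\eta_{\ell-1}(u_{\ell-1}^\star)$ for $\ell\ge1$ (and $\mathrm{H}_0^{k,j}\lesssim\mathrm{H}_0^{0,0}$), the son estimate $\#\mathcal{T}_\ell\le C_{\textup{son}}\#\mathcal{T}_{\ell-1}$, and $\eta_0(u_0^\star)\lesssim\mathrm{H}_0^{0,0}$ upgrade this to $\sup_{(\ell,k,j)\in\mathcal{Q}}(\#\mathcal{T}_\ell)^s\mathrm{H}_\ell^{k,j}\lesssim\max\{\|u^\star\|_{\mathbb{A}_s},\mathrm{H}_0^{0,0}\}$, and the rates~$=$~complexity equivalence of the first paragraph transfers this bound to the cumulative work; this is precisely the content of \cite[Theorem~4.3]{bhimps2023b}.

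I expect the main obstacle to be the perturbed-marking step: carefully tracking how the two nested inexact solvers turn the exact Dörfler parameter $\theta$ into $\widetilde\theta$, and verifying that the hypotheses~\eqref{eq:double:assumption:lambda} together with the smallness conditions on $\theta,\lambda_{\textup{sym}},\lambda_{\textup{alg}}$ indeed guarantee $\widetilde\theta<\theta^\star$. In particular one must pin down that $C_{\textup{alg}}$ is the correct constant in the a~posteriori estimate $|\mkern-1.5mu|\mkern-1.5mu| u_\ell^\star-u_\ell^{\underline{k},\underline{j}}|\mkern-1.5mu|\mkern-1.5mu|\le C_{\textup{alg}}\lambda_{\textup{sym}}\eta_\ell(u_\ell^{\underline{k},\underline{j}})$, which requires a careful use of the algebraic and symmetrization stopping criteria on the \emph{final} inner and outer iterates together with the perturbed contraction~\eqref{eq2:zarantonello:inexact}. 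The remaining combinatorial ingredients — the overlay estimate, the NVB mesh-closure estimate, and the optimality of Dörfler marking via discrete reliability~\eqref{axiom:discrete_reliability} — are standard and available in \cite{stevenson2007,stevenson2008,ckns2008,cfpp2014}.
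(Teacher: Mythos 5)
You should first note that the paper itself does not prove this theorem: it is quoted verbatim from \cite[Theorem~4.3]{bhimps2023b} and closed with \(\qed\), so there is no in-paper argument to compare against. Your outline follows the route that the cited work (and its single-solver analogue \cite[Theorem~8]{ghps2021}) takes: reduce rates with respect to cumulative cost to rates with respect to \(\#\mathcal{T}_\ell\) via full R-linear convergence and the geometric-series argument of Lemma~\ref{lemma:complexity:sequence}; derive the a~posteriori control of \(u_\ell^\star-u_\ell^{\underline{k},\underline{j}}\) by \(C_{\textup{alg}}\,\lambda_{\textup{sym}}\,\eta_\ell(u_\ell^{\underline{k},\underline{j}})\) from the two stopping criteria \eqref{eq:double:termination:sym}--\eqref{eq:double:termination:alg} together with Lemma~\ref{lem:inexact_contraction}; and then run the Stevenson/CKNS comparison argument (discrete reliability~\eqref{axiom:discrete_reliability}, overlay, NVB mesh closure, geometric series from R-linear convergence), plus the standard lower bound for \(\|u^\star\|_{\mathbb{A}_s}\). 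These ingredients and their interplay are correctly identified; a small caveat is that your derivation produces \(C_{\textup{alg}}\) with \(\lambda_{\textup{alg}}\) where the statement has the threshold \(\lambda_{\textup{alg}}^\star\) (defined only in \cite{bhimps2023b}), which is consistent provided \(\lambda_{\textup{alg}}\le\lambda_{\textup{alg}}^\star\).

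The one step that, as written, does not go through is the perturbed-marking argument, and you flagged it yourself as the delicate point. The quasi-minimality of \(\mathcal{M}_\ell\) in Algorithm~\ref{algorithm:double} is relative to the Dörfler criterion~\eqref{eq:doerfler} for the \emph{computed} iterate \(u_\ell^{\underline{k},\underline{j}}\) with parameter \(\theta\). Hence, to bound \(\#\mathcal{M}_\ell\), you must exhibit a comparison set \(\mathcal{R}_\ell\) of cardinality \(\lesssim\|u^\star\|_{\mathbb{A}_s}^{1/s}\eta_\ell(u_\ell^\star)^{-1/s}\) that satisfies this \emph{inexact} criterion with parameter \(\theta\); the comparison lemma (discrete reliability and overlay) only provides a set satisfying the Dörfler criterion for the \emph{exact} estimator \(\eta_\ell(\cdot,u_\ell^\star)\) with a parameter below \(\theta^\star\). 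So the transfer you need is ``exact with \(\widetilde\theta\) \(\Rightarrow\) inexact with \(\theta\)'', and your \(\widetilde\theta=(\theta^{1/2}+\lambda_{\textup{sym}}/\lambda_{\textup{sym}}^\star)^2/(1-\lambda_{\textup{sym}}/\lambda_{\textup{sym}}^\star)^2\) is precisely the parameter for which stability~\eqref{axiom:stability} and the bound \(C_{\textup{stab}}C_{\textup{alg}}\lambda_{\textup{sym}}\le\lambda_{\textup{sym}}/\lambda_{\textup{sym}}^\star\) make this implication work, with the hypothesis \(\widetilde\theta<\theta^\star\) guaranteeing applicability of the comparison lemma. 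Your sketch instead transfers the marked set's inexact-\(\theta\) property to an exact property; that direction yields the different (smaller) parameter \((\theta^{1/2}-\lambda_{\textup{sym}}/\lambda_{\textup{sym}}^\star)^2/(1+\lambda_{\textup{sym}}/\lambda_{\textup{sym}}^\star)^2\) rather than \(\widetilde\theta\), and, more importantly, cannot be combined with the algorithm's quasi-minimality, so the conclusion \(\#\mathcal{M}_\ell\lesssim\#\mathcal{R}_\ell\) does not follow as stated. The required estimates are the same ones you already wrote down, so the repair is routine, but the implication must be reversed for the cardinality bound to hold.
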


\section{Application to strongly monotone nonlinear PDEs}
\label{section:conclusion}

In the previous sections, the particular focus was on general second-order linear elliptic PDEs~\eqref{eq:strongform}. However, the results also apply to nonlinear PDEs with strongly monotone and Lipschitz-continuous nonlinearity as considered, e.g., in \cite{gmz2011, gmz2012, cw2017, ghps2018, hw2020a, hw2020b, ghps2021, hpsv2021, hpw2021, hw2022, hmrv2023, mv2023} to mention only some recent works.

Given a nonlinearity $\boldsymbol{A} \colon \mathbb{R}^d \to \mathbb{R}^d$, we consider the nonlinear elliptic PDE
\begin{equation}\label{eq:nonlinear:strongform}
	-\operatorname{div}\big(\boldsymbol{A}(\nabla u^\star)\big) = f - \operatorname{div} \boldsymbol{f} \text{
		in } \Omega
	\quad \text{subject to}\quad
	u^\star = 0 \text{ on } \partial\Omega.
\end{equation}
We define the associated nonlinear operator $\mathcal{A} \colon
	H^1_0(\Omega) \to H^{-1}(\Omega) \coloneqq H^1_0(\Omega)^*$ via $\mathcal{A} u \coloneqq
	\langle \boldsymbol{A}(\nabla u), \nabla(\cdot) \rangle_{L^2(\Omega)}$, where we suppose
that the $L^2(\Omega)$ scalar product on the right-hand side is well-defined.
Then, the weak formulation of~\eqref{eq:nonlinear:strongform} reads
\begin{equation}\label{eq:nonlinear:weakform}
	\langle \mathcal{A} u^\star, v \rangle = F(v) \coloneqq \langle f, v \rangle_{L^2(\Omega)} +
	\langle \boldsymbol{f}, \nabla v \rangle_{L^2(\Omega)}
	\quad \text{for all } v \in H^1_0(\Omega),
\end{equation}
where $\langle \cdot, \cdot \rangle$ on the left-hand side denotes the duality brackets
on $H^{-1}(\Omega) \times H^1_0(\Omega)$.

Let $a(\cdot,\cdot)$ be an equivalent scalar product on $H^1_0(\Omega)$ with
induced norm $|\mkern-1.5mu|\mkern-1.5mu| \, \cdot \, |\mkern-1.5mu|\mkern-1.5mu|$. Suppose that $\mathcal{A}$ is strongly monotone and
Lipschitz continuous, i.e., there exist $0 < \alpha \le L$ such that, for all
$u, v, w \in H^1_0(\Omega)$,
\begin{equation}\label{eq:nonlinear:assumptions}
	\alpha \, |\mkern-1.5mu|\mkern-1.5mu| u-v |\mkern-1.5mu|\mkern-1.5mu|^2 \le \langle \mathcal{A} u - \mathcal{A} v, u-v \rangle
	\quad \text{and} \quad
	\langle \mathcal{A} u - \mathcal{A} v, w \rangle \le L \, |\mkern-1.5mu|\mkern-1.5mu| u-v |\mkern-1.5mu|\mkern-1.5mu|
	\, |\mkern-1.5mu|\mkern-1.5mu| w |\mkern-1.5mu|\mkern-1.5mu|.
\end{equation}
Under these assumptions, the Zarantonello theorem from~\cite{Zarantonello1960} (or
main theorem on strongly monotone operators~\cite[Section 25.4]{Zeidler1990})
yields existence and uniqueness of the solution $u^\star \in H^1_0(\Omega)$
to~\eqref{eq:nonlinear:weakform}.
For $\mathcal{T}_H \in \mathbb{T}$ and $\mathcal{X}_H \subseteq H^1_0(\Omega)$ from~\eqref{eq:discrete_space},
it also applies to the discrete setting and yields existence and
uniqueness of the discrete solution $u_H^\star \in \mathcal{X}_H$ to
\begin{equation}\label{eq:nonlinear:discrete}
	\langle \mathcal{A} u_H^\star, v_H \rangle = F(v_H)
	\quad \text{for all } v_H \in \mathcal{X}_H,
\end{equation}
which is quasi-optimal in the sense of the C\'ea lemma~\eqref{eq:cea}.

As already discussed in Section~\ref{section:double}, the proof of the
Zarantonello theorem relies on the Banach fixed-point theorem:
For $0 < \delta < 2\alpha/L^2$,
define $\Phi_H(\delta; \cdot) \colon \mathcal{X}_H \to
	\mathcal{X}_H$ via
\begin{equation}\label{eq:nonlinear:zarantonello}
	a(\Phi_H(\delta; u_H), v_H)
	=
	a(u_H, v_H) + \delta \big[ F(v_H) - \langle \mathcal{A}(u_H), v_H \rangle \big]
	\quad \text{for all } u_H, v_H \in \mathcal{X}_H.
\end{equation}
Since $a(\cdot, \cdot)$ is a scalar product, $\Phi_H(\delta; u_H) \in \mathcal{X}_H$ is well-defined. Moreover, for $0 < \delta < 2\alpha/L^2$ and $0 < q_{\textup{sym}}^\star \coloneqq [1 - \delta(2\alpha-\delta L^2)]^{1/2} < 1$, this mapping is a contraction, i.e.,
\begin{equation*}
	|\mkern-1.5mu|\mkern-1.5mu| u_H^\star - \Phi_H(\delta; u_H) |\mkern-1.5mu|\mkern-1.5mu|
	\le q_{\textup{sym}}^\star \, |\mkern-1.5mu|\mkern-1.5mu| u_H^\star - u_H |\mkern-1.5mu|\mkern-1.5mu|
	\quad \text{for all } u_H \in \mathcal{X}_H;
\end{equation*}
see also~\cite{hw2020a,hw2020b}. Analogously to Section~\ref{section:double}, the variational formulation~\eqref{eq:nonlinear:zarantonello} leads to a linear SPD system for
which we employ a uniformly contractive
solver~\eqref{eq:double:contractive-solver}.
Overall, we note that for the nonlinear PDE~\eqref{eq:nonlinear:strongform}, the natural AFEM loop consists of
\begin{itemize}
	\item discretization via a conforming triangulation~\(\mathcal{T}_{\ell}\) (leading to the non-computable solution \(u_\ell^\star\) to the discrete nonlinear system~\eqref{eq:nonlinear:discrete}),
	\item iterative linearization (giving rise to the solution \(u_\ell^{k, \star} = \Phi_\ell(\delta; u_\ell^{k-1, \underline{j}})\) of the large-scale discrete SPD system~\eqref{eq:nonlinear:zarantonello} obtained by linearizing~\eqref{eq:nonlinear:discrete} in \(u_\ell^{k-1, \underline{j}}\)),
	\item and an algebraic solver (leading to computable approximations \(u_\ell^{k, j} \approx u_\ell^{k, \star}\)).
\end{itemize}
Thus, the natural AFEM algorithm takes the form of
Algorithm~\ref{algorithm:double} in Section~\ref{section:double}.

So far, the only work analyzing convergence of such a full adaptive loop for the numerical solution of~\eqref{eq:nonlinear:strongform} is
\cite{hpsv2021}, which uses the Zarantonello approach~\eqref{eq:nonlinear:zarantonello} for linearization and a preconditioned CG method with optimal additive Schwarz preconditioner for solving the arising SPD systems. Importantly and contrary to the present work, the
adaptivity parameters \(\theta, \lambda_{\textup{sym}}\), and \(\lambda_{\textup{alg}}\) in~\cite{hpsv2021} must be sufficiently
small to guarantee full linear convergence and optimal complexity,
while even plain convergence for arbitrary \(\theta\), \(\lambda_{\textup{sym}}\), and \(\lambda_{\textup{alg}}\) is left open. Instead, the present work proves full R-linear convergence at least for arbitrary $\theta$ and $\lambda_{\textup{sym}}$ and the milder constraint~\eqref{eq:double:assumption:lambda} on $\lambda_{\textup{alg}}$.

To apply the analysis from Section~\ref{section:double}, it only remains to
check the validity of Proposition~\ref{prop:axioms} and
Proposition~\ref{prop:orthogonality}. The following result provides the
analogue of Proposition~\ref{prop:axioms} for scalar nonlinearities. Note that, first, the same assumptions are made in~\cite{hpsv2021} and,
second, only the proof of stability~\eqref{axiom:stability} (going back
to~\cite{gmz2012}) is restricted to scalar nonlinearities and lowest-order
discretizations, i.e., $p = 1$ in~\eqref{eq:discrete_space}, while reduction~\eqref{axiom:reduction}, reliability~\eqref{axiom:reliability},
	and discrete reliability~\eqref{axiom:discrete_reliability} follow as for linear PDEs and thus hold for all \(p \ge 1\).

\begin{proposition}[{see, e.g.,~\cite[Section~3.2]{gmz2012} or~\cite[Section~10.1]{cfpp2014}}]\label{prop:nonlinear:axioms}
	Suppose that $\boldsymbol{A}(\nabla u) = a(|\nabla u|^2) \nabla u$, where $a \in C^1(\mathbb{R}_{\ge0})$ satisfies
	\begin{equation*}
		\alpha (t-s) \le a(t^2) t - a(s^2) s \le \frac{L}{3} \, (t-s)
		\quad \text{for all } t \ge s \ge 0.
	\end{equation*}
	Then, there holds~\eqref{eq:nonlinear:assumptions} for $|\mkern-1.5mu|\mkern-1.5mu| v |\mkern-1.5mu|\mkern-1.5mu|
		\coloneqq \| \nabla v \|_{L^2(\Omega)}$ and the standard residual error estimator~\eqref{eq:definition_eta} for lowest-order elements $p = 1$ (with $\boldsymbol{A}\nabla v_H$ understood as $\boldsymbol{A}(\nabla v_H)$ and $\boldsymbol{b} = 0 = c$) satisfies stability~\eqref{axiom:stability}, reduction~\eqref{axiom:reduction}, reliability~\eqref{axiom:reliability}, discrete reliability~\eqref{axiom:discrete_reliability}, and quasi-monotonicity~\eqref{eq:quasi-monotonicity} from Proposition~\ref{prop:axioms}.
	\qed
\end{proposition}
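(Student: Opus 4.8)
The plan is to reduce every assertion to pointwise estimates for the flux $\boldsymbol{A}\colon\mathbb{R}^d\to\mathbb{R}^d$ and then to repeat, essentially verbatim, the residual-estimator arguments of the linear theory, the only genuinely new difficulty being the stability axiom~\eqref{axiom:stability}, which is what forces the restriction to lowest order $p=1$.

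\textbf{Step~1 (from the scalar hypothesis to the monotonicity--Lipschitz bounds).} First I would introduce the scalar profile $\phi(t)\coloneqq a(t^2)\,t$, so that $\boldsymbol{A}(\xi)=a(|\xi|^2)\,\xi=\tfrac{\phi(|\xi|)}{|\xi|}\,\xi$ and $\phi(0)=0$. Dividing the hypothesis $\alpha(t-s)\le\phi(t)-\phi(s)\le\tfrac{L}{3}(t-s)$ by $t-s$ and letting $s\uparrow t$ gives $\alpha\le\phi'(t)\le\tfrac{L}{3}$, and dividing by $t$ gives $\alpha\le\phi(t)/t\le\tfrac{L}{3}$. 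Since $a\in C^1$, the field $\boldsymbol{A}$ is $C^1$ with Jacobian $D\boldsymbol{A}(\xi)=a(|\xi|^2)\,I+2\,a'(|\xi|^2)\,\xi\otimes\xi$, which is symmetric with eigenvalue $\phi'(|\xi|)$ in the direction $\xi$ and eigenvalue $\phi(|\xi|)/|\xi|$ on $\xi^{\perp}$; hence $\langle D\boldsymbol{A}(\xi)\,\zeta,\zeta\rangle\ge\alpha\,|\zeta|^2$ and $|D\boldsymbol{A}(\xi)\,\zeta|\le\tfrac{L}{3}\,|\zeta|\le L\,|\zeta|$ for all $\xi,\zeta\in\mathbb{R}^d$. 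Integrating along the segment joining $\nabla v$ to $\nabla u$ yields the pointwise inequalities $(\boldsymbol{A}(\nabla u)-\boldsymbol{A}(\nabla v))\cdot\nabla(u-v)\ge\alpha\,|\nabla(u-v)|^2$ and $|\boldsymbol{A}(\nabla u)-\boldsymbol{A}(\nabla v)|\le L\,|\nabla(u-v)|$, which upon integration over $\Omega$ (and testing with $\nabla w$) are precisely the strong monotonicity and Lipschitz bounds~\eqref{eq:nonlinear:assumptions} for $|\mkern-1.5mu|\mkern-1.5mu| v|\mkern-1.5mu|\mkern-1.5mu|=\|\nabla v\|_{L^2(\Omega)}$. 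The factor $\tfrac13$ in the hypothesis is mere slack under $a\in C^1$; without differentiability one instead splits $\boldsymbol{A}(\xi)-\boldsymbol{A}(\zeta)=\tfrac{\phi(|\xi|)}{|\xi|}(\xi-\zeta)+\big(\tfrac{\phi(|\xi|)}{|\xi|}-\tfrac{\phi(|\zeta|)}{|\zeta|}\big)\zeta$ and estimates the second term elementarily, which is where the constant $3$ is spent.

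\textbf{Step~2 (reliability, discrete reliability, reduction, quasi-monotonicity).} These follow as in the linear case with $\boldsymbol{A}\nabla v_H$ read as $\boldsymbol{A}(\nabla v_H)$ and the bounds of Step~1 replacing ellipticity and boundedness of $b(\cdot,\cdot)$. For reliability~\eqref{axiom:reliability} I would start from $\alpha\,|\mkern-1.5mu|\mkern-1.5mu| u^\star-u_H^\star|\mkern-1.5mu|\mkern-1.5mu|^2\le\langle\mathcal{A}u^\star-\mathcal{A}u_H^\star,\,u^\star-u_H^\star\rangle$, use that the Galerkin residual $F(\cdot)-\langle\mathcal{A}u_H^\star,\cdot\rangle$ vanishes on $\mathcal{X}_H$ by~\eqref{eq:nonlinear:discrete} to subtract a Scott--Zhang quasi-interpolant $I_H(u^\star-u_H^\star)\in\mathcal{X}_H$, integrate by parts elementwise, and bound the volume and jump terms by $\eta_H(u_H^\star)\,|\mkern-1.5mu|\mkern-1.5mu| u^\star-u_H^\star|\mkern-1.5mu|\mkern-1.5mu|$ via local first-order approximation estimates and Cauchy--Schwarz; discrete reliability~\eqref{axiom:discrete_reliability} is analogous with $u^\star$ replaced by $u_h^\star$ and the quasi-interpolant taken onto $\mathcal{X}_h$ with support around $\mathcal{T}_H\setminus\mathcal{T}_h$, using $\mathcal{X}_H\subseteq\mathcal{X}_h$. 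Reduction~\eqref{axiom:reduction} is purely geometric and does not see the nonlinearity: the jump of $(\boldsymbol{A}(\nabla v_H)-\boldsymbol{f})\cdot n$ across a face interior to a bisected coarse element vanishes because $v_H$ is a single polynomial there (and $\boldsymbol{A}(\nabla v_H)|_T$ is $C^1$), while the volume and remaining face weights shrink by $2^{-1/d}$, giving $q_{\textup{red}}=2^{-1/(2d)}$ for every $p\ge1$. Finally, quasi-monotonicity~\eqref{eq:quasi-monotonicity} results by the standard argument of~\cite{cfpp2014}: the estimator is monotone under refinement for a fixed argument, $\eta_h(u_H^\star)\le\eta_H(u_H^\star)$ (on common elements the indicators are unchanged --- subdivided coarse faces only split the same jump trace --- and reduction~\eqref{axiom:reduction} handles the refined ones), so stability~\eqref{axiom:stability} gives $\eta_h(u_h^\star)\le\eta_H(u_H^\star)+C_{\textup{stab}}\,|\mkern-1.5mu|\mkern-1.5mu| u_h^\star-u_H^\star|\mkern-1.5mu|\mkern-1.5mu|$, and bounding $|\mkern-1.5mu|\mkern-1.5mu| u_h^\star-u_H^\star|\mkern-1.5mu|\mkern-1.5mu|$ by $(1+C_{\textup{Céa}})\,C_{\textup{rel}}\,\eta_H(u_H^\star)$ through C\'ea~\eqref{eq:cea} and~\eqref{axiom:reliability} (respectively by $C_{\textup{drel}}\,\eta_H(\mathcal{T}_H\setminus\mathcal{T}_h,u_H^\star)$ through~\eqref{axiom:discrete_reliability}) yields the stated bound on $C_{\textup{mon}}$.

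\textbf{Step~3 (stability --- the main obstacle).} I expect~\eqref{axiom:stability} to be the delicate point, and it is what pins down $p=1$. For $p=1$ the gradient $\nabla v_H$ is elementwise constant, hence $\boldsymbol{A}(\nabla v_H)$ is elementwise constant and $\operatorname{div}\boldsymbol{A}(\nabla v_H)=0$; thus on a common element $T\in\mathcal{T}_h\cap\mathcal{T}_H$ the volume residual reduces to the pure data term $-\operatorname{div}\boldsymbol{f}-f$, carries no dependence on the discrete function, and drops out of the difference $\eta_h(\mathcal{T}_h\cap\mathcal{T}_H,v_h)-\eta_H(\mathcal{T}_h\cap\mathcal{T}_H,v_H)$, leaving only the face jumps. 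On each face the jump difference is a constant bounded, by the Lipschitz estimate of Step~1, by $L$ times $|\nabla v_h-\nabla v_H|$ on the two adjacent elements; together with the shape-regularity scaling $|T|^{1/d}\,|F|\simeq|T|$ and a finite-overlap count this gives
\begin{equation*}
	\Big(\sum_{T\in\mathcal{T}_h\cap\mathcal{T}_H}|T|^{1/d}\,\big\|\lbrack\!\lbrack(\boldsymbol{A}(\nabla v_h)-\boldsymbol{A}(\nabla v_H))\cdot n\rbrack\!\rbrack\big\|_{L^2(\partial T\cap\Omega)}^2\Big)^{1/2}\lesssim L\,|\mkern-1.5mu|\mkern-1.5mu| v_h-v_H|\mkern-1.5mu|\mkern-1.5mu|,
\end{equation*}
and the reverse triangle inequality for the $\ell^2$-sum over $T$ delivers~\eqref{axiom:stability} with $C_{\textup{stab}}$ depending only on $L$, the dimension $d$, and shape regularity. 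For $p\ge2$ the key identity $\operatorname{div}\boldsymbol{A}(\nabla v_H)=0$ fails, and the Lipschitz dependence of the volume residual on $v_H$ would require second-order information on $v_H$ weighted against $D\boldsymbol{A}(\nabla v_H)$; this is exactly why~\cite{gmz2012} restricts the stability proof to scalar nonlinearities and lowest-order elements, and the general case remains open.
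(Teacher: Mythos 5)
The paper itself gives no proof of Proposition~\ref{prop:nonlinear:axioms} (it is stated with a q.e.d.\ mark and delegated to the cited references), and your sketch correctly reconstructs exactly those standard arguments: the eigenvalue computation $\operatorname{spec}(D\boldsymbol{A}(\xi))=\{\phi'(|\xi|),\phi(|\xi|)/|\xi|\}\subset[\alpha,L/3]$ followed by integration along segments gives~\eqref{eq:nonlinear:assumptions}, the axioms~\eqref{axiom:reduction}, \eqref{axiom:reliability}, \eqref{axiom:discrete_reliability}, \eqref{eq:quasi-monotonicity} transfer verbatim from the linear theory, and stability~\eqref{axiom:stability} for $p=1$ hinges, as you say, on $\operatorname{div}\boldsymbol{A}(\nabla v_H)=0$ elementwise plus the pointwise Lipschitz bound on the normal-flux jumps. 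Only a harmless slip: with $\boldsymbol{b}=0=c$ and $p=1$ the volume residual equals $\operatorname{div}\boldsymbol{f}-f$, not $-\operatorname{div}\boldsymbol{f}-f$; the relevant point, its independence of $v_H$, is unaffected.
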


Under the same assumptions as in Proposition~\ref{prop:nonlinear:axioms}, quasi-orthogonality~\eqref{axiom:orthogonality} is satisfied. For the convenience of the reader, we include a sketch of the proof, which also shows that the quasi-orthogonality holds for any \(p \ge 1\) with \(C_{\textup{orth}} = L/\alpha\) and \(\delta = 1\).

\begin{proposition}\label{prop:nonlinear:orthogonality}
	Under the assumptions of Proposition~\ref{prop:nonlinear:axioms} and for any
	sequence of nested finite-dimensional subspaces $\mathcal{X}_\ell \subseteq \mathcal{X}_{\ell+1} \subset
		H^1_0(\Omega)$, the corresponding Galerkin solutions $u_\ell^\star \in
		\mathcal{X}_\ell$ to~\eqref{eq:nonlinear:discrete} satisfy
	quasi-orthogonality~\eqref{axiom:orthogonality} with $\delta = 1$ and $C_{\textup{orth}} =
		L/\alpha$, i.e.,
	\begin{equation*}
		\sum_{\ell' = \ell}^\infty \, |\mkern-1.5mu|\mkern-1.5mu| u_{\ell'+1}^\star - u_{\ell'}^\star |\mkern-1.5mu|\mkern-1.5mu|^2
		\le \frac{L}{\alpha} \, |\mkern-1.5mu|\mkern-1.5mu| u^\star - u_\ell |\mkern-1.5mu|\mkern-1.5mu|^2
		\quad \text{for all } \ell \in \mathbb{N}_0.
	\end{equation*}
\end{proposition}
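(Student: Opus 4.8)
The plan is to exploit the potential structure of the scalar nonlinearity, which is available precisely under the assumptions of Proposition~\ref{prop:nonlinear:axioms}. Since $\boldsymbol{A}(\nabla v) = a(|\nabla v|^2)\nabla v$, the operator $\mathcal{A}$ is the derivative of the energy functional $\mathcal{E}\colon H^1_0(\Omega)\to\mathbb{R}$, $\mathcal{E}(v)\coloneqq \int_\Omega \Psi(|\nabla v|^2)\,\mathrm{d}x - F(v)$ with $\Psi(t)\coloneqq\tfrac{1}{2}\int_0^t a(s)\,\mathrm{d}s$, in the sense that $\langle\mathcal{E}'(v),w\rangle = \langle\mathcal{A} v,w\rangle - F(w)$ for all $v,w\in H^1_0(\Omega)$; well-definedness and Gâteaux differentiability follow from the growth and Lipschitz bounds on $a$. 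Consequently, $u^\star$ is the unique global minimizer of $\mathcal{E}$ on $H^1_0(\Omega)$ (strict convexity plus $\mathcal{E}'(u^\star)=\mathcal{A} u^\star-F=0$), and each Galerkin solution $u_\ell^\star$ from~\eqref{eq:nonlinear:discrete} minimizes $\mathcal{E}$ over $\mathcal{X}_\ell$, i.e.\ $\langle\mathcal{E}'(u_\ell^\star),v_\ell\rangle = 0$ for all $v_\ell\in\mathcal{X}_\ell$.

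The second ingredient is the elementary two-sided estimate
\[
	\tfrac{\alpha}{2}\,|\mkern-1.5mu|\mkern-1.5mu| v - w |\mkern-1.5mu|\mkern-1.5mu|^2
	\le \mathcal{E}(v) - \mathcal{E}(w) - \langle\mathcal{E}'(w),v-w\rangle
	\le \tfrac{L}{2}\,|\mkern-1.5mu|\mkern-1.5mu| v - w |\mkern-1.5mu|\mkern-1.5mu|^2
	\qquad\text{for all } v,w\in H^1_0(\Omega),
\]
which I would obtain by writing the middle term as $\int_0^1 \langle\mathcal{A}(w+t(v-w)) - \mathcal{A}(w),\,v-w\rangle\,\mathrm{d}t$ along the segment from $w$ to $v$ and bounding the integrand below by $\alpha t\,|\mkern-1.5mu|\mkern-1.5mu| v-w|\mkern-1.5mu|\mkern-1.5mu|^2$ and above by $Lt\,|\mkern-1.5mu|\mkern-1.5mu| v-w|\mkern-1.5mu|\mkern-1.5mu|^2$ by means of strong monotonicity and Lipschitz continuity~\eqref{eq:nonlinear:assumptions}.

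Third, I combine these facts with Galerkin orthogonality. Applying the lower bound with $v=u_{\ell'}^\star$ and $w=u_{\ell'+1}^\star$ and noting that $u_{\ell'}^\star - u_{\ell'+1}^\star\in\mathcal{X}_{\ell'+1}$ makes the term $\langle\mathcal{E}'(u_{\ell'+1}^\star),\,u_{\ell'}^\star - u_{\ell'+1}^\star\rangle$ vanish, so that $\mathcal{E}(u_{\ell'}^\star) - \mathcal{E}(u_{\ell'+1}^\star) \ge \tfrac{\alpha}{2}\,|\mkern-1.5mu|\mkern-1.5mu| u_{\ell'+1}^\star - u_{\ell'}^\star|\mkern-1.5mu|\mkern-1.5mu|^2$. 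Summing this telescoping inequality over $\ell'=\ell,\dots,\ell+N$ and using $\mathcal{E}(u_{\ell+N+1}^\star)\ge\mathcal{E}(u^\star)$ gives $\sum_{\ell'=\ell}^{\ell+N} |\mkern-1.5mu|\mkern-1.5mu| u_{\ell'+1}^\star - u_{\ell'}^\star|\mkern-1.5mu|\mkern-1.5mu|^2 \le \tfrac{2}{\alpha}\,\bigl[\mathcal{E}(u_\ell^\star) - \mathcal{E}(u^\star)\bigr]$. Applying the upper bound with $v=u_\ell^\star$ and $w=u^\star$, together with $\mathcal{E}'(u^\star)=0$, yields $\mathcal{E}(u_\ell^\star) - \mathcal{E}(u^\star) \le \tfrac{L}{2}\,|\mkern-1.5mu|\mkern-1.5mu| u^\star - u_\ell^\star|\mkern-1.5mu|\mkern-1.5mu|^2$. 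Chaining the two estimates and letting $N\to\infty$ proves $\sum_{\ell'=\ell}^\infty |\mkern-1.5mu|\mkern-1.5mu| u_{\ell'+1}^\star - u_{\ell'}^\star|\mkern-1.5mu|\mkern-1.5mu|^2 \le \tfrac{L}{\alpha}\,|\mkern-1.5mu|\mkern-1.5mu| u^\star - u_\ell^\star|\mkern-1.5mu|\mkern-1.5mu|^2$, i.e.\ quasi-orthogonality~\eqref{axiom:orthogonality} with $\delta=1$ and $C_{\textup{orth}} = L/\alpha$; since nestedness of the spaces $\mathcal{X}_\ell$ is the only property used, this holds for every polynomial degree $p\ge 1$.

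The main obstacle is really the availability of the energy functional: the whole argument hinges on $\mathcal{A}$ being a potential operator, which is guaranteed here only by the scalar structure $\boldsymbol{A}(\nabla v)=a(|\nabla v|^2)\nabla v$ assumed in Proposition~\ref{prop:nonlinear:axioms}. For genuinely non-potential strongly monotone operators one cannot expect the sharp exponent $\delta=1$ and must fall back on the weaker $(N+1)^{1-\delta}$ scaling of Proposition~\ref{prop:orthogonality} via the inf-sup based reasoning of~\cite{feischl2022}. A minor technical point is to justify that $\mathcal{E}$ is well-defined and Gâteaux differentiable with the claimed derivative, which is routine from the growth/Lipschitz assumptions on $a$.
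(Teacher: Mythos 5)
Your proposal is correct and follows essentially the same route as the paper: the energy functional $E(v)=\frac12\int_\Omega\int_0^{|\nabla v|^2}a(t)\,\mathrm{d}t\,\mathrm{d}x-F(v)$, the two-sided equivalence $\frac{\alpha}{2}\,|\mkern-1.5mu|\mkern-1.5mu| u_H^\star-v_H|\mkern-1.5mu|\mkern-1.5mu|^2\le E(v_H)-E(u_H^\star)\le\frac{L}{2}\,|\mkern-1.5mu|\mkern-1.5mu| u_H^\star-v_H|\mkern-1.5mu|\mkern-1.5mu|^2$, the telescoping sum over nested minimizers, and the limit $N\to\infty$. The only difference is that you sketch the proof of the equivalence by integrating strong monotonicity and Lipschitz continuity along the segment, whereas the paper cites it from the literature; both are fine.
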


\begin{proof}[Sketch of proof]
	One can prove that the energy
	\begin{equation*}
		E(v) \coloneqq \frac{1}{2}\int_\Omega \int_0^{|\nabla v(x)|^2} a(t) \, \textup{d}t
		\, \textup{d}x - F(v)
		\quad
		\text{for all $v \in H^1_0(\Omega)$}
	\end{equation*}
	is G\^ateaux-differentiable with $\textup{d} E(v) = \mathcal{A} v - F$. Then,
	elementary calculus (see, e.g., \cite[Lemma~5.1]{ghps2018}
	or~\cite[Lemma~2]{hw2020b}) yields the equivalence
	\begin{equation}\label{eq:nonlinear:equivalence}
		\frac{\alpha}{2} \, |\mkern-1.5mu|\mkern-1.5mu| u_H^\star \!-\! v_H |\mkern-1.5mu|\mkern-1.5mu|^2
		\le E(v_H) \!-\! E(u_H^\star)
		\le \frac{L}{2} \, |\mkern-1.5mu|\mkern-1.5mu|u_H^\star \!-\! v_H |\mkern-1.5mu|\mkern-1.5mu|^2
		\,\, \text{for all } \mathcal{T}_H \in \mathbb{T} \text{ and all } v_H \in \mathcal{X}_H.
	\end{equation}
	In particular, we see that $u_H^\star$ is the unique minimizer to
	\begin{equation}\label{eq:nonlinear:minimization}
		E(u_H^\star) = \min_{v_H \in \mathcal{X}_H} E(v_H),
	\end{equation}
	and~\eqref{eq:nonlinear:equivalence}--\eqref{eq:nonlinear:minimization} also
	hold for $u^\star$ and $H^1_0(\Omega)$ replacing $u_H^\star$ and
	$\mathcal{X}_H$, respectively.

	From this and the telescopic sum, we infer that
	\begin{align*}
		 & \frac{\alpha}{2} \sum_{\ell' = \ell}^{\ell+N} \, |\mkern-1.5mu|\mkern-1.5mu|u_{\ell'+1}^\star -
		u_{\ell'}^\star |\mkern-1.5mu|\mkern-1.5mu|^2
		\stackrel{\eqref{eq:nonlinear:equivalence}}\le  \sum_{\ell' = \ell}^{\ell+N}  \big[
			E(u_{\ell'}^\star) - E(u_{\ell'+1}^\star) \big]
		= E(u_{\ell}^\star) - E(u_{\ell+N+1}^\star)
		\\& \qquad
		\stackrel{\eqref{eq:nonlinear:minimization}}\le E(u_{\ell}^\star) - E(u^\star)
		\stackrel{\eqref{eq:nonlinear:equivalence}}\le \frac{L}{2} \, |\mkern-1.5mu|\mkern-1.5mu| u^\star -
		u_{\ell}^\star |\mkern-1.5mu|\mkern-1.5mu|^2
		\quad \text{for all } \ell, N \in \mathbb{N}_0.
	\end{align*}
	Since the right-hand side is independent of $N$, we conclude the proof for $N \to \infty$.
\end{proof}

Thus, full R-linear convergence from Theorem~\ref{theorem:double:convergence} and optimal complexity from Theorem~\ref{theorem:double:complexity} apply also to the nonlinear PDE~\eqref{eq:nonlinear:strongform} under the assumptions on the nonlinearity from Proposition~\ref{prop:nonlinear:axioms}. Unlike~\cite{hpsv2021}, we can guarantee full R-linear convergence~\eqref{eq:double:convergence} for arbitrary \(\theta\), arbitrary \(\lambda_{\textup{sym}}\), and a weaker constraint~\eqref{eq:double:assumption:lambda} on \(\lambda_{\textup{alg}}\).
As in
\cite[Theorem~5]{hpsv2021}, optimal complexity follows if the adaptivity parameters are sufficiently small.

\begin{remark}
	The cost-optimal numerical solution of nonlinear PDEs is widely open beyond the case of strongly monotone and Lipschitz continuous nonlinearities considered here. We stress that this problem class even excludes the \(p\)-Laplacian, for which linear convergence in ~\cite{dk2008} and optimal convergence rates in~\cite{bdk2012} are known under the constraint of the exact solution of the arising nonlinear discrete systems. Convergent linearization strategies for the \(p\)-Laplacian are the topic of recent research, e.g.,~\cite{dftw2020, bds2023, heid2023}. However, optimal complexity appears to be still out of reach. Nevertheless, the present work could outline potential strategies also in this respect.
\end{remark}

\section{Numerical experiments}\label{section:numerics}
%
The following numerical experiments employ the \textsc{Matlab}
software package MooAFEM from~\cite{MooAFEM}.\footnote{The experiments
	accompanying this paper will be provided under
	\url{https://www.tuwien.at/mg/asc/praetorius/software/mooafem}.}
The first numerical example illustrates the performance of
	Algorithm~\ref{algorithm:single} for a symmetric linear elliptic PDE with a
	strong jump in the diffusion coefficient and compares Algorithm~\ref{algorithm:single}
	to the Algorithm~\ref{algorithm:exact} with exact solution.
	The second numerical example demonstrates the efficiency of
	Algorithm~\ref{algorithm:double} for a nonsymmetric general second-order linear
	elliptic PDE with a
	moderate convection. The numerical behavior of the adaptive algorithm proposed
	in Section~\ref{section:conclusion} for a strongly monotone and Lipschitz continuous
	nonlinear PDE concludes the numerical experiments. All numerical experiments
	showcase the performance of the adaptive algorithms for different selections of the
	involved adaptivity parameters.

\subsection{AFEM for a symmetric linear elliptic PDE with strong jump in the diffusion coefficient}
\label{section:numerics:symmetric}

	Following~\cite{kellogg1974}, we consider
	the square domain \(\Omega \coloneqq (-1, 1)^2\) and the jumping diffusion coefficient
	\(\boldsymbol{A} (x) \coloneqq
	a(x)  \, I_{2 \times 2} \in L^\infty(\Omega)\)
	for
	\(a(x) \coloneqq 161.4476387975881\) if \(x_1 x_2 > 0\)
	and \(a(x) \coloneqq 1\) if \(x_1 x_2 < 0\). In order to measure the performance
	of Algorithm~\ref{algorithm:single}, we consider the interface problem
	\begin{equation}\label{eq:experiment:symmetric}
		-\operatorname{div}\bigl( \boldsymbol{A} \, \nabla  u^\star \bigr) = 0 \text{ in } \Omega.
	\end{equation}
	The exact weak solution in polar coordinates reads
	\(u^\star(r, \phi) \coloneqq r^\alpha \mu(\phi)\)
	where the constants are set to be \(\alpha = 0.1\),
	\(\beta = -14.92256510455152\), \(\delta = \pi/4\),
	and \(\mu (\phi)\) is defined as
	\[
		\mu(\phi)
		\coloneqq
		\begin{cases}
			\cos((\pi/2 - \beta)\alpha) \,
			\cos((\phi - \pi/2 + \delta) \alpha)
			 & \text{if } 0 \leq \phi < \pi / 2,
			\\
			\cos(\delta \alpha)\, \cos((\phi - \pi + \beta) \alpha)
			 & \text{if } \pi / 2 \leq \phi < \pi,
			\\
			\cos(\beta \alpha) \, \cos((\phi - \pi - \delta) \alpha)
			 & \text{if } \pi \leq \phi < 3 \pi / 2,
			\\
			\cos((\pi/2 - \delta)\alpha) \,
			\cos((\phi - 3\pi/2 - \beta) \alpha)
			 & \text{if } 3 \pi / 2 \leq \phi < 2 \pi.
		\end{cases}
	\]
	The exact solution determines the inhomogeneous Dirichlet boundary conditions
	\(u_{\textup{D}}(x) \coloneqq u^\star(x)\)
	for \(x \in \partial\Omega\). The parameter \(\alpha\) gives the regularity
	of the solution \(u \in H^{1+\alpha-\varepsilon}(\Omega)\)
	for all \(\varepsilon > 0\), having a strong point singularity
	at the origin where the interfaces intersect.
	Figure~\ref{fig:meshes:solutions:symmetric} illustrates the initial triangulation
	\(\mathcal{T}_{0}\), the adaptively generated mesh \(\mathcal{T}_{15}\)
	with \(518\) triangles, and the exact solution \(u^\star\), and
	the computed solutions \(u_{15}^{\underline{k}}\). We see that the adaptive
	algorithm captures the singularity induced by the strong jump in the diffusion
	coefficient and refines around the origin.
	Let \(\Pi_{E}^{p-1}\) be the \(L^2(E)\)-orthogonal projection
	onto the space of polynomials of degree at most \(p-1\) on the boundary face
	\(E \subset \partial \Omega\) and
	\(\partial u_{\textup{D}} / \partial s\) denote the arc-length derivative
	of \(u_{\textup{D}}\).
	We approximate the boundary data
	\(u_{\textup{D}}\) by
	nodal interpolation from~\cite{mns2003, fpp2014}, leading to
	an additional boundary-data oscillation term (for sufficiently smooth
	\(u_{\textup{D}}\), see, e.g., \cite{afkpp2013}),
	in the error estimator
	\begin{align*}
		\eta_H(T,v_H)^2
		 & \coloneqq
		\vert T \vert\,
		\Vert \operatorname{div}(\boldsymbol{A} \nabla v_H) \Vert_{L^2(T)}^2
		+
		\vert T \vert^{1/2}\,
		\Vert [\boldsymbol{A} \nabla v_H \cdot n] \Vert_{L^2(\partial T \setminus \partial \Omega)}^2
		\\
		 & \hphantom{{}\coloneqq{}}
		+
		\sum_{E \subset \partial T \cap \partial \Omega}
		\vert T \vert^{1/2}
		\Vert
		(1 - \Pi_{E}^{p-1}) \, \partial u_{\textup{D}} / \partial s
		\Vert_{L^2(E)}.
	\end{align*}
	Figure~\ref{fig:convergence:symmetric} shows that Algorithm~\ref{algorithm:single}
	leads to optimal convergence rates \(-p/2\) with respect to the number of degrees
	and the overall computation time for arbitrary polynomial degrees
	\(p \in \{1, 2, 3, 4\}\) and a moderate marking parameter \(\theta = 0.5\) and fixed
	algebraic solver parameter
	\(\lambda = 0.01\). Furthermore, the reduced elliptic regularity
	leads to convergence rates \(-1/10\) for uniform mesh refinement and
	any polynomial degree \(p\). In Figure~\ref{fig:lamalg:symmetric}, we observe
	that even moderate values of the algebraic solver parameter \(\lambda\)
	lead to optimal convergence rates \(-1\) for polynomial degree \(p = 2\) with
	respect to the number of degrees and the overall computation time.
	Figure~\ref{fig:solver:symmetric} verifies that
	Algorithm~\ref{algorithm:single} in combination with the optimal \(hp\)-robust
	geometric multigrid solver from~\cite{imps2022} outperforms the \textsc{Matlab}
	built-in \texttt{mldivide} in terms of the cumulative computation time.
	Table~\ref{tab:cost:symmetric} summarizes the optimal selection of the adaptivity
	parameters for the interface problem
	in~\eqref{eq:experiment:symmetric} with polynomial degree \(p = 2\). The best
	performance is observed for the marking parameter \(\theta \in \{0.5, 0.7\}\)
	and the algebraic solver parameter \(\lambda = 0.9\).

\begin{figure}
	\centering
	\subfloat{\includegraphics[width = 0.35\textwidth]{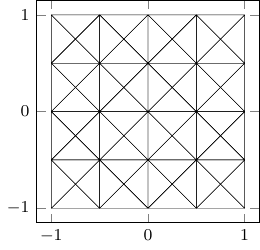}}
	\hfil
	\subfloat{\includegraphics[width = 0.35\textwidth]{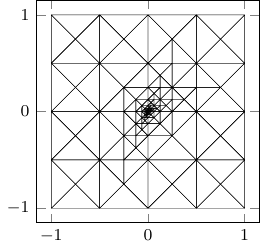}}
	\\
	\subfloat{\includegraphics[width = 0.45\textwidth]{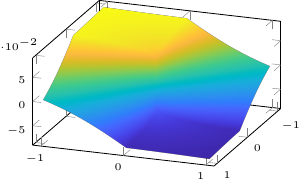}}
	\hfil
	\subfloat{\includegraphics[width = 0.45\textwidth]{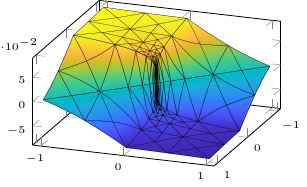}}
	\caption{Illustration of the initial triangulation \(\mathcal{T}_{0}\), the
		adaptively generated mesh \(\mathcal{T}_{15}\) with \(518\) triangles,
		the exact solution \(u^\star\) and the computed solution
		\(u_{15}^{\underline{k}}\) for the Kellogg benchmark
		problem~\eqref{eq:experiment:symmetric} with polynomial degree \(p = 2\),
		marking parameter \(\theta = 0.5\), and algebraic solver parameter \(\lambda = 0.01\).}
	\label{fig:meshes:solutions:symmetric}
\end{figure}

\begin{figure}
	\centering
	\includegraphics[width = 0.45\textwidth]{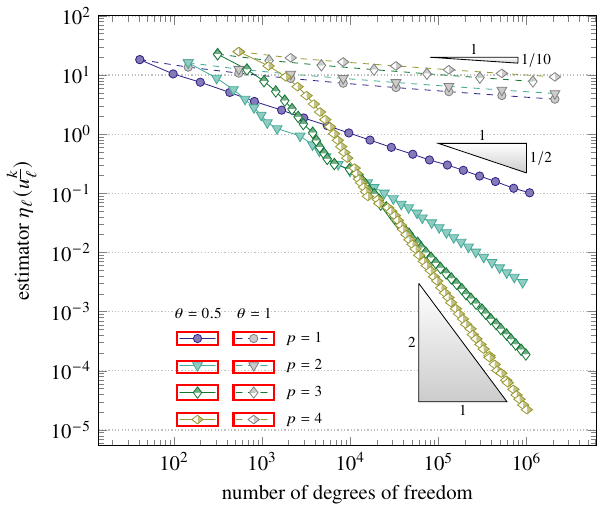}
	\hfil
	\includegraphics[width = 0.45\textwidth]{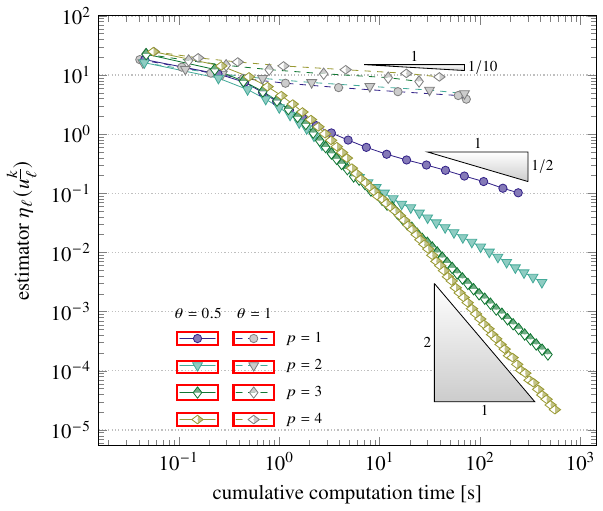}
	\caption{Convergence history plot of the error estimator \(\eta_{\ell}(u_{\ell}^{\underline{k}})\)
		with respect to the number of degrees of freedom (left) and the cumulative computation time (right) for the Kellogg benchmark problem~\eqref{eq:experiment:symmetric} for different polynomial degrees \(p \in \{1, 2, 3, 4\}\)
		with fixed marking parameter \(\theta = 0.5\) and
		algebraic solver parameter \(\lambda = 0.01\).}
	\label{fig:convergence:symmetric}
\end{figure}

\begin{figure}
	\centering
	\includegraphics[width = 0.45\textwidth]{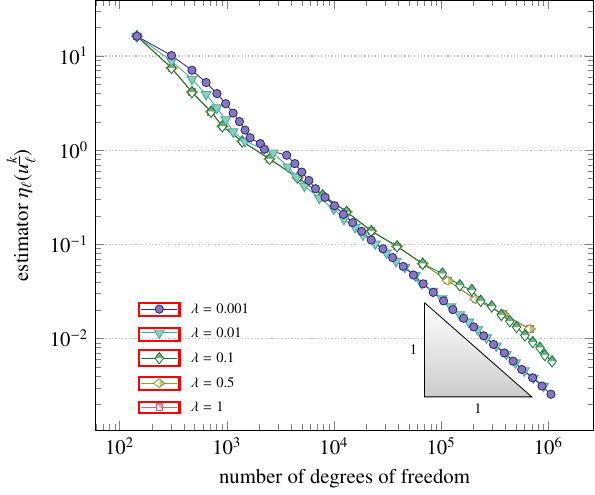}
	\hfil
	\includegraphics[width = 0.45\textwidth]{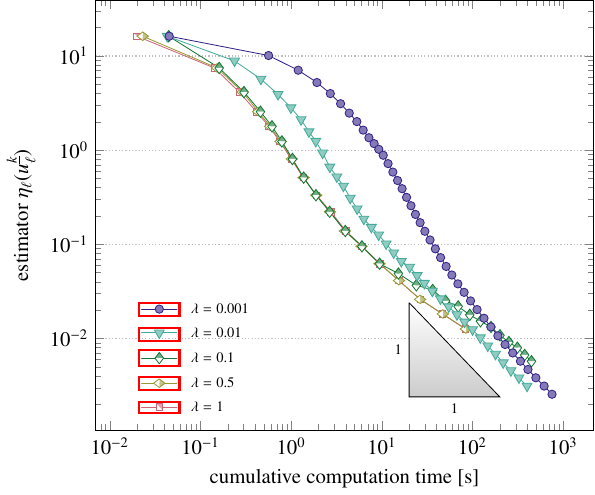}
	\caption{Convergence history plot of the error estimator \(\eta_{\ell}(u_{\ell}^{\underline{k}})\) for different algebraic solver parameters \(\lambda \in \{0.001, 0.01, 0.1, 0.5, 1\}\) and fixed polynomial degree \(p=2\) and marking parameter \(\theta = 0.5\) with respect to the number of degrees of freedom (left) and the cumulative computation time (right) for the Kellogg benchmark problem~\eqref{eq:experiment:symmetric}.}
	\label{fig:lamalg:symmetric}
\end{figure}

\begin{figure}
	\centering
	\includegraphics[width = 0.45\textwidth]{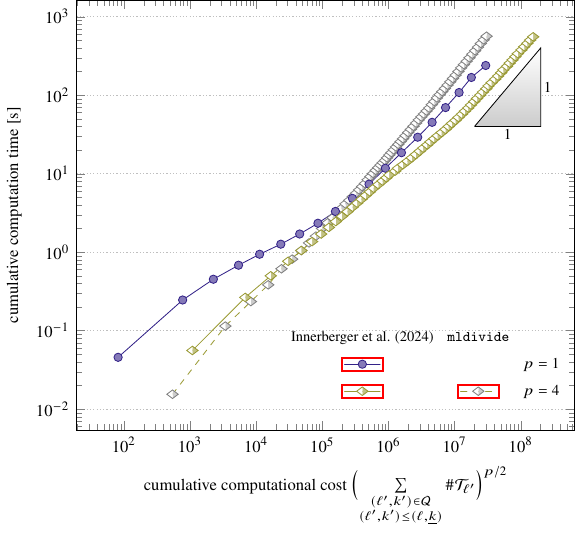}
	\caption{Comparison of the cumulative computation time for the algebraic solver (Algorithm~\ref{algorithm:single})
		from~\cite{imps2022} with the \textsc{Matlab} built-in \texttt{mldivide} (Algorithm~\ref{algorithm:exact})
		over the cumulative number of degrees of freedom to solve the
		Kellogg benchmark problem
		\eqref{eq:experiment:symmetric} with polynomial degree \(p \in \{1, 4\}\),
		marking parameter \(\theta = 0.5\), and algebraic solver parameter \(\lambda = 0.01\).}
	\label{fig:solver:symmetric}
\end{figure}

\begin{table}
	\centering
	\includegraphics{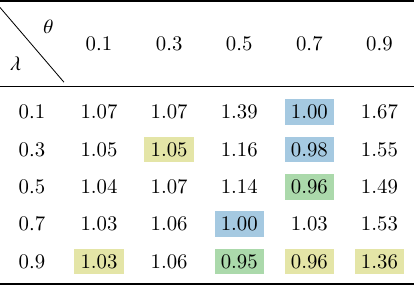}
	\caption{Optimal selection of the parameters for the Kellogg benchmark problem~\eqref{eq:experiment:symmetric} with polynomial degree \(p = 2\). For the comparison, we consider the weighted cumulative time \(\big[ \eta_{\ell}(u_{\ell}^{\underline{k}}) \, \sum_{| \ell', k' | \le | \ell, \underline{k} |} \mathrm{time}(\ell') \big]\) with stopping criterion \(\eta_{\ell}(u_{\ell}^{\underline{k}}) < 10^{-2} \, \eta_0(u_0^0)\) for various choices of marking parameter \(\theta\)
		and algebraic solver parameter \(\lambda\). The best choice per column is marked in yellow, per row in blue, and for both in green. For all fixed marking parameter \(\theta\), the best performance is observed for \(\lambda = 0.9\), while overall best results are achieved for \(\theta \in \{0.5, 0.7\}\).}
	\label{tab:cost:symmetric}
\end{table}

\subsection{AFEM for a general second-order linear elliptic PDE}
On the L-shaped domain \(\Omega = (-1,1)^2 \setminus [0, 1) \times
[-1, 0)\), we consider
\begin{equation}\label{eq:experiment}
	-\Delta u^\star + \boldsymbol{b} \cdot \nabla u^\star + u^\star = 1
	\text{ in } \Omega \quad \text{and} \quad u^\star = 0 \text{ on } \partial \Omega
	\quad \text{with} \quad \boldsymbol{b}(x) = x;
\end{equation}
see Figure~\ref{fig:meshes} for the geometry and some adaptively
generated meshes.

\begin{figure}[htbp!]
	\resizebox{\textwidth}{!}{
		\includegraphics[width=0.19\textwidth]{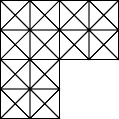}
		\hfil
		\includegraphics[width=0.19\textwidth]{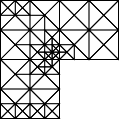}
		\hfil
		\includegraphics[width=0.19\textwidth]{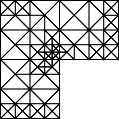}
		\hfil
		\includegraphics[width=0.19\textwidth]{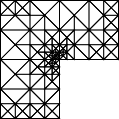}
		\hfil
		\includegraphics[width=0.19\textwidth]{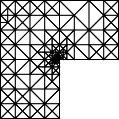}
	}
	\caption{\label{fig:meshes} Illustration of the initial triangulation
		\(\mathcal{T}_{0}\) and the sequence of adaptively generated meshes \(\mathcal{T}_{0},
		\ldots, \mathcal{T}_{4}\) for the experiment~\eqref{eq:experiment}.}
\end{figure}
\emph{Optimality of Algorithm~\ref{algorithm:double} with respect to
	large solver-stopping parameters \(\lambda_{\textup{sym}}\) and \(\lambda_{\textup{alg}}\).}
We choose \(\delta = 0.5\), $\theta = 0.3$, and the polynomial
degree
\(p=2\). Figure~\ref{fig:optimality_lamsym} presents the convergence rates for fixed $\lambda_{\textup{alg}} = 0.7$ and
several symmetrization parameters \(\lambda_{\textup{sym}} \in \{0.1, 0.3, 0.5, 0.7, 0.9\}\).
We observe that Algorithm~\ref{algorithm:double} obtains the optimal
convergence rate \(-1\) with respect to the number of degrees of freedom
and the cumulative computation time for any
selection of \(\lambda_{\textup{sym}}\).
Moreover, the same holds true for fixed $\lambda_{\textup{sym}} = 0.7$ and any choice of the algebraic solver parameter \(\lambda_{\textup{alg}} \in \{0.1, 0.3, 0.5, 0.7, 0.9\}\) as depicted in Figure~\ref{fig:optimality_lamalg}.
Table~\ref{tab:cost} illustrates the weighted cumulative
computation time of
Algorithm~\ref{algorithm:double} and shows that a smaller marking parameter
\(\theta = 0.3\) in combination with larger solver-stopping
parameters \(\lambda_{\textup{sym}}\) and
\(\lambda_{\textup{alg}}\) is favorable. Furthermore, Figure~\ref{fig:optimality_p}
shows that Algorithm~\ref{algorithm:double} guarantees optimal convergence
rates \(-p/2\) for several polynomial degrees \(p\) with fixed
\(\delta = 0.5\), marking parameter \(\theta = 0.3\), and moderate 
\(\lambda_{\textup{sym}} = \lambda_{\textup{alg}} = 0.7\).

\begin{table}[t]
	\centering
	\includegraphics[width=\textwidth]{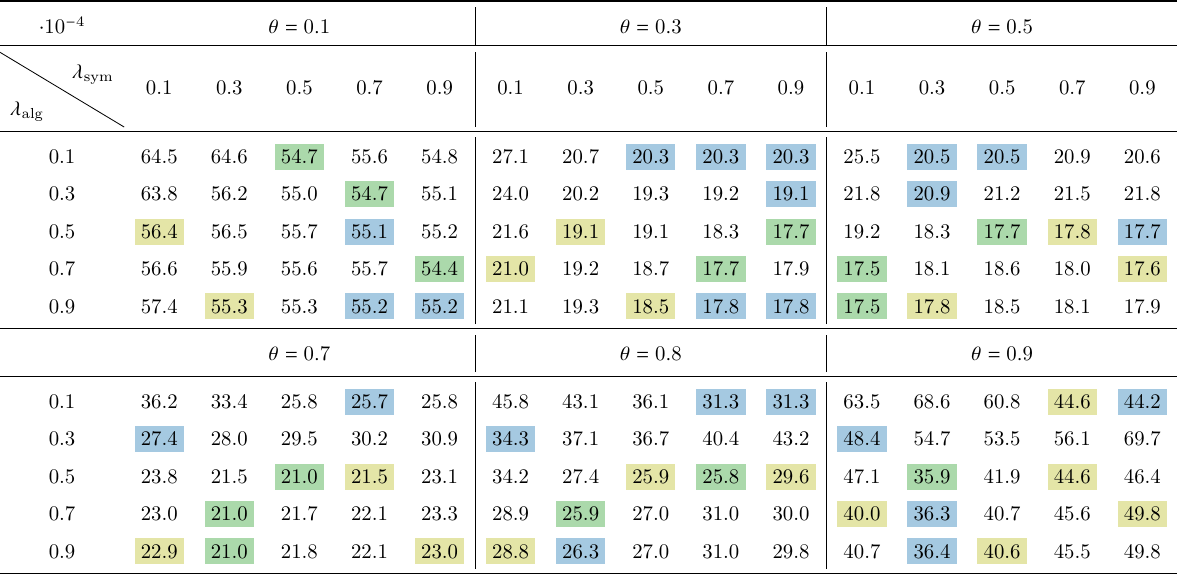}
	\caption{\label{tab:cost} Optimal selection of parameters with
		respect to the computational costs for the nonsymmetric experiment~\eqref{eq:experiment} with $p=2$ and $\delta = 0.5$. For the comparison, we consider
		the
		weighted
		cumulative time $ \big[ \eta_{\ell}(u_\ell^{\underline{k},
					\underline{j}}) \,
				\sum_{|
					\ell', k', j' | \le
					| \ell, \underline{k}, \underline{j} |}
				\mathrm{time}(\ell')
				\big]$ (values in \(10^{-4}\)) with stopping
		criterion $\eta_\ell(u_\ell^{\underline{k}, \underline{j}}) < 5 \cdot
			10^{-5}$ for
		various choices of
		$\lambda_{\textup{sym}}$, $\lambda_{\textup{alg}}$, and $\theta$.
		In each $\theta$-block, we mark in yellow the best choice per column, in blue the best choice per row, and in green when both choices coincide. The best choices for $\lambda_{\textup{alg}}$ and $\lambda_{\textup{sym}}$ are observed for $\theta = 0.3$ and $\theta=0.5$.}
\end{table}
\begin{figure}[t]
	\includegraphics[width=0.45\textwidth]{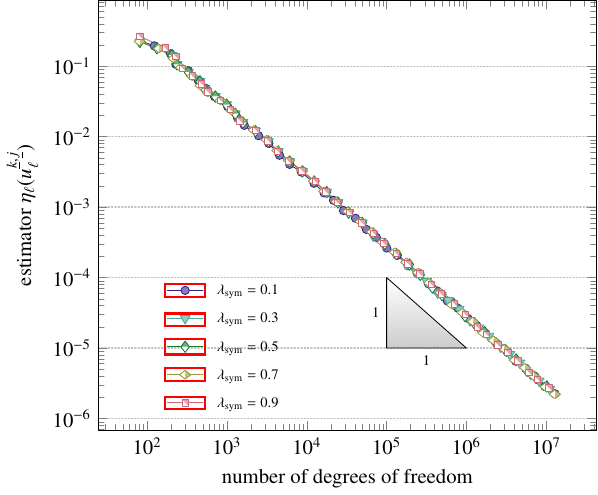}
	\hfil
	\includegraphics[width=0.45\textwidth]{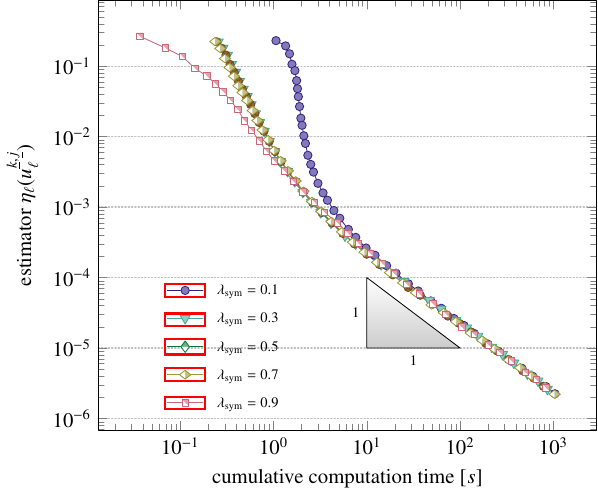}
	\caption{\label{fig:optimality_lamsym} Convergence history plot of the
		error estimator with respect to the
		number of degrees of freedom (left) and the computation time (right) for the nonsymmetric experiment~\eqref{eq:experiment} with $p=2$ and $\delta = 0.5$ for
		several symmetrization parameters \(\lambda_{\mathrm{sym}}
		\in
		\{0.1, 0.3, 0.5, 0.7, 0.9\}\) and fixed
		algebraic solver parameter \(\lambda_{\mathrm{alg}} = 0.7\) and marking parameter \(\theta = 0.3\).
	}
\end{figure}
\begin{figure}[t]
	\includegraphics[width=0.45\textwidth]{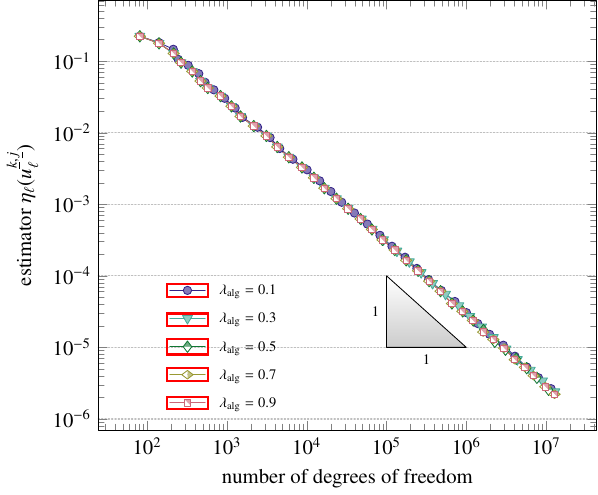}
	\hfil
	\includegraphics[width=0.45\textwidth]{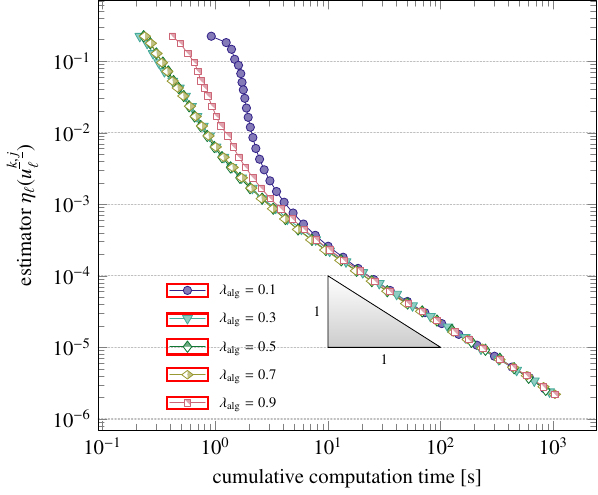}
	\caption{\label{fig:optimality_lamalg} Convergence history plot of the
		error estimator with respect to the number of degrees of freedom (left) and the computation time (right) for the nonsymmetric experiment~\eqref{eq:experiment} with $p=2$ and $\delta = 0.5$ for
		several algebraic solver parameters \(\lambda_{\mathrm{alg}} \in
		\{0.1, 0.3, 0.5, 0.7, 0.9\}\) and fixed symmetrization parameter \(\lambda_{\mathrm{sym}} = 0.7\) and marking parameter \(\theta = 0.3\).
	}
\end{figure}
\emph{Optimality of Algorithm~\ref{algorithm:double} with respect to
	large
	marking parameter \(\theta\).}
We choose the polynomial
degree \(p = 2\), \(\delta = 0.5\), and solver-stopping parameters \(\lambda_{\textup{alg}} = \lambda_{\textup{sym}}
= 0.7\). Figure~\ref{fig:optimality_theta} shows that also
for moderate marking parameters \(\theta\),
Algorithm~\ref{algorithm:double} guarantees optimal convergence rates with
respect to the number of degrees of freedom
and the cumulative computation time. Moreover, we observe that a
very small as well as a large choice of \(\theta\) lead to a worse performance.
\begin{figure}

	\includegraphics[width=0.45\textwidth]{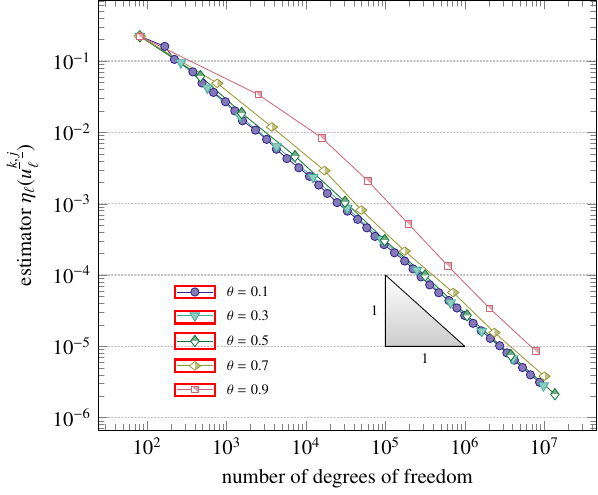}
	\hfil
	\includegraphics[width=0.45\textwidth]{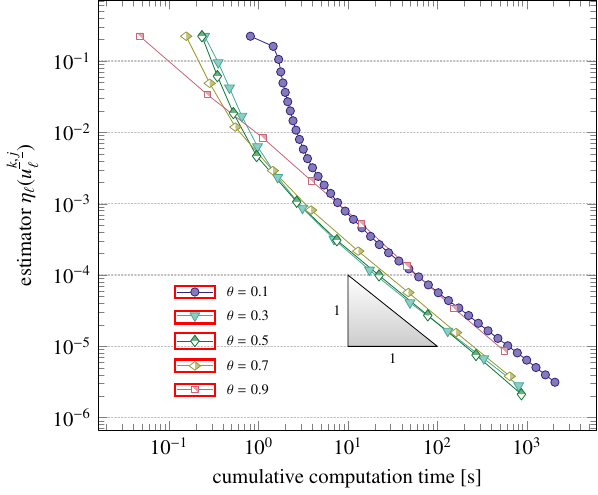}
	\caption{\label{fig:optimality_theta} Convergence history plot of the error
		estimator with respect to the number of degrees of freedom (left) and the computation time (right) for the nonsymmetric experiment~\eqref{eq:experiment} with $p=2$ and $\delta = 0.5$ for several
		Dörfler marking parameters \(\theta \in
		\{0.1, 0.3, 0.5, 0.7, 0.9\}\) and fixed solver-stopping parameters
		\(\lambda_{\mathrm{sym}} = \lambda_{\mathrm{alg}} = 0.7\).}
\end{figure}
\begin{figure}
	\includegraphics[width=0.45\textwidth]{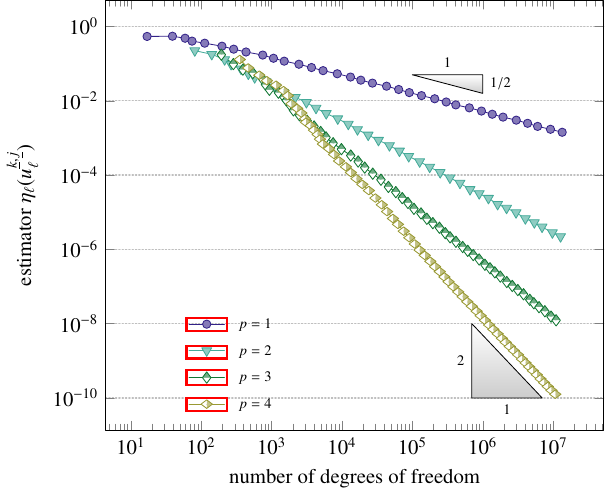}
	\hfil
	\includegraphics[width=0.45\textwidth]{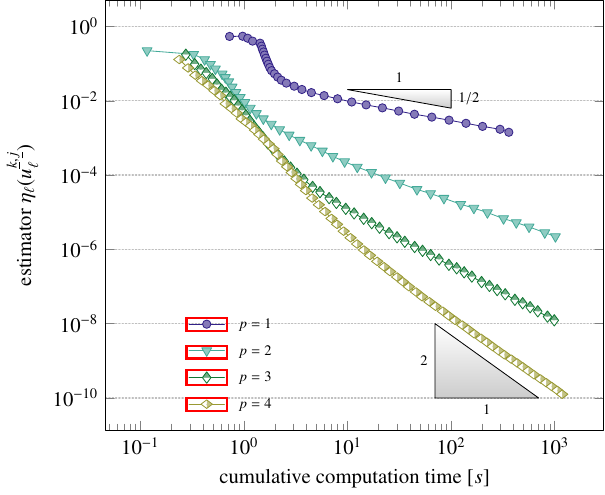}
	\caption{\label{fig:optimality_p} Convergence history plot of the error
		estimator with respect to the
		number of degrees of freedom
		(left) and with
		respect to the overall computation time (right) for the nonsymmetric experiment~\eqref{eq:experiment} with $\delta = 0.5$
		for several polynomial degrees \(p = 1,2,3,4\),
		and fixed marking parameter \(\theta = 0.3\) and
		solver-stopping parameters \(\lambda_{\mathrm{sym}} =
		\lambda_{\mathrm{alg}} = 0.7\).}
\end{figure}

\subsection{AFEM for a strongly monotone and Lipschitz continuous nonlinearity}
On the Z-shaped domain
	\(\Omega \coloneqq (-1, 1)^2 \setminus \operatorname{conv}\{ (-1, 0), (0, 0), (-1, -1) \}\)
	and with the nonlinearity
	\(a(x, t) = 1 + \log(1+t) / (1+t)\) for all \(x \in \Omega\) and \(t \ge 0\),
	we consider the quasi-linear elliptic PDE with homogeneous Dirichlet boundary conditions
	\begin{equation}\label{eq:experiment:nonlinear}
		-\operatorname{div} \bigl(a(\cdot, | \nabla u^\star |^2) \, \nabla u^\star \bigr) + u^\star
		= 1 \text{ in } \Omega
		\quad \text{and} \quad u^\star = 0 \text{ on } \partial \Omega
	\end{equation}
	Hence, the nonlinearity \(a(\cdot)\) satisfies the growth condition in Proposition~\ref{prop:nonlinear:axioms}
	with constants \(\alpha \approx 0.9582898017\) and \(L \approx 1.542343818\).
	In the experiments, we use the optimal damping parameter \(\delta = 1/L\) and
	the fixed polynomial degree \(p = 1\). Figure~\ref{fig:nonlinear:meshes} illustrates
	the initial triangulation \(\mathcal{T}_{0}\), the adaptively generated mesh \(\mathcal{T}_{7}\)
	with \(1483\) triangles, and the computed solution \(u_{7}^{\underline{k}, \underline{j}}\).
	In the Figures~\ref{fig:nonlinear:lamlin}--\ref{fig:nonlinear:theta}, we observe that
	the strategy from Algorithm~\ref{algorithm:double} applied to this nonlinear problem
	guarantees optimal convergence
	rates \(-1/2\) with respect to the number of degrees of freedom and the cumulative
	computation time for arbitrary marking parameters \(\theta\), linearization
	parameters \(\lambda_{\textup{lin}}\), and algebraic solver parameters \(\lambda_{\textup{alg}}\).
	In particular, even large values of \(\theta\), \(\lambda_{\textup{lin}}\), and
	\(\lambda_{\textup{alg}}\)
	lead to optimal convergence rates. Table~\ref{tab:cost:nonlinear} summarizes the optimal
	selection of the adaptivity parameters for the nonlinear problem~\eqref{eq:experiment:nonlinear}
	and indicates that moderate values of \(\theta\) in combination with large values of
	\(\lambda_{\textup{lin}}\) and \(\lambda_{\textup{alg}}\) are beneficial in terms
	of computational cost.

\begin{figure}
	\centering
	\includegraphics[width = 0.35\textwidth]{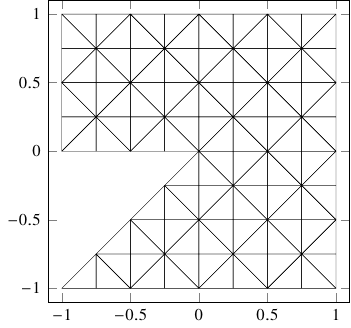}
	\hfil
	\includegraphics[width = 0.35\textwidth]{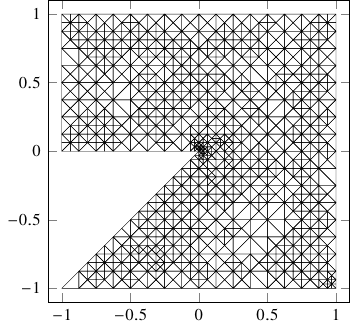}
	\\
	\includegraphics[width = 0.45\textwidth]{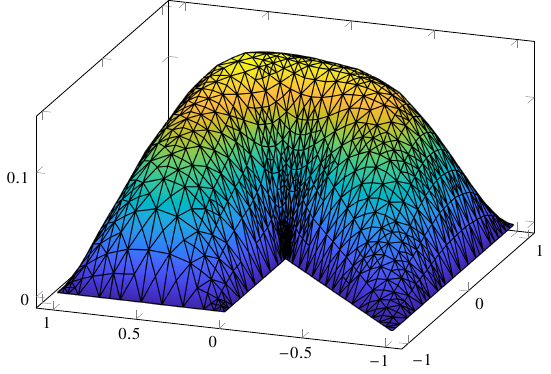}
	\caption{Initial triangulation \(\mathcal{T}_{0}\), adaptively generated mesh \(\mathcal{T}_{7}\) with \(1483\) triangles,
		and the computed solution \(u_{7}^{\underline{k}, \underline{j}}\) for the nonlinear experiment~\eqref{eq:experiment:nonlinear}
		with polynomial degree \(p=1\), optimal damping parameter \(\delta = 1/L\),
		marking parameter \(\theta = 0.3\), linearization parameter \(\lambda_{\textup{lin}} = 0.7\),
		and algebraic solver parameter \(\lambda_{\textup{alg}} = 0.7\).}
	\label{fig:nonlinear:meshes}
\end{figure}

\begin{figure}
	\centering
	\includegraphics[width = 0.45\textwidth]{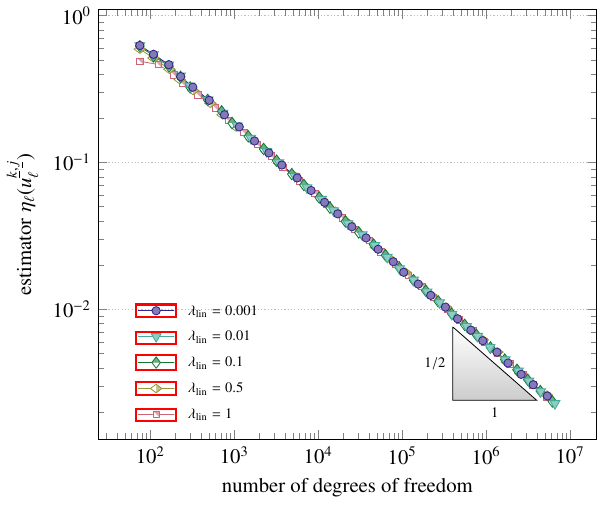}
	\hfil
	\includegraphics[width = 0.45\textwidth]{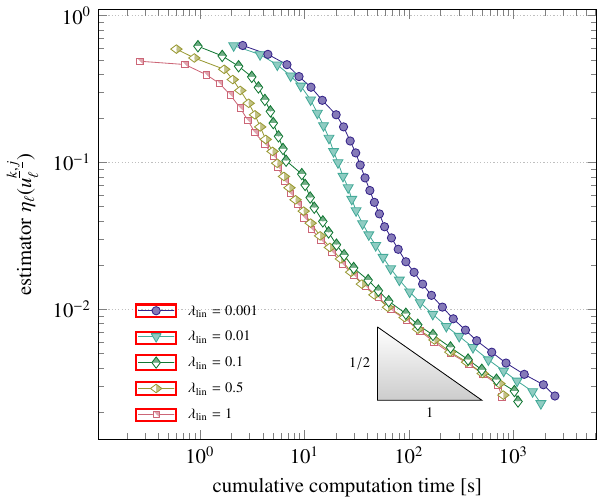}
	\caption{Convergence history plot of the error estimator \(\eta_{\ell}(u_{\ell}^{\underline{k}, \underline{j}})\) with respect to the number of degrees of freedom (left) and the cumulative computation time (right) for the nonlinear experiment~\eqref{eq:experiment:nonlinear}
		with polynomial degree \(p=1\) and
		optimal damping parameter \(\delta = 1/L\) for several linearization
		parameters \(\lambda_{\textup{lin}} \in \{0.001, 0.01, 0.1, 0.5, 1\}\) and
		fixed marking parameter \(\theta = 0.3\) and algebraic solver parameter
		\(\lambda_{\textup{alg}} = 0.7\).}
	\label{fig:nonlinear:lamlin}
\end{figure}

\begin{figure}
	\includegraphics[width = 0.45\textwidth]{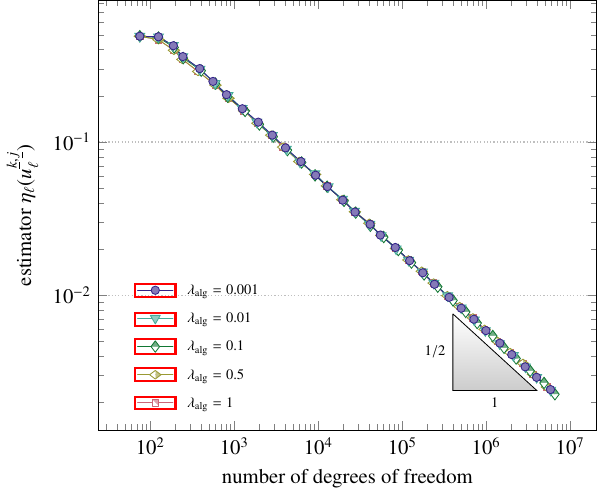}
	\hfil
	\includegraphics[width = 0.45\textwidth]{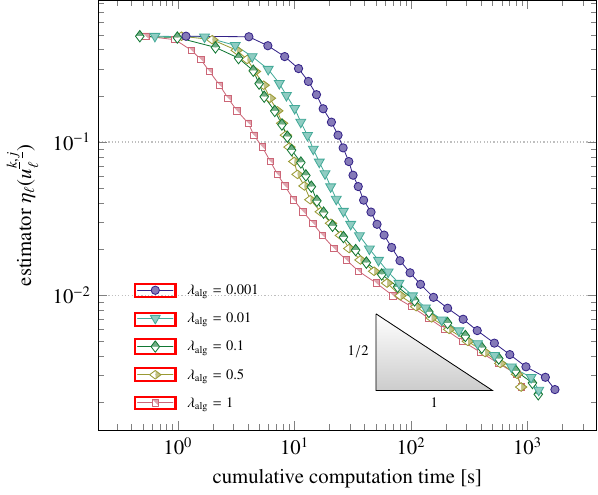}
	\caption{Convergence history plot of the error estimator \(\eta_{\ell}(u_{\ell}^{\underline{k}, \underline{j}})\) with respect to the number of degrees of freedom (left) and the cumulative computation time (right) for the nonlinear experiment~\eqref{eq:experiment:nonlinear}
		with polynomial degree \(p=1\) and
		optimal damping parameter \(\delta = 1/L\)
		for several algebraic solver parameters \(\lambda_{\textup{alg}} \in \{0.001, 0.01, 0.1, 0.5, 1\}\)
		and fixed marking parameter \(\theta = 0.3\), and linearization parameter
		\(\lambda_{\textup{lin}} = 0.7\).}
	\label{fig:nonlinear:lamalg}
\end{figure}

\begin{figure}
	\includegraphics[width = 0.45\textwidth]{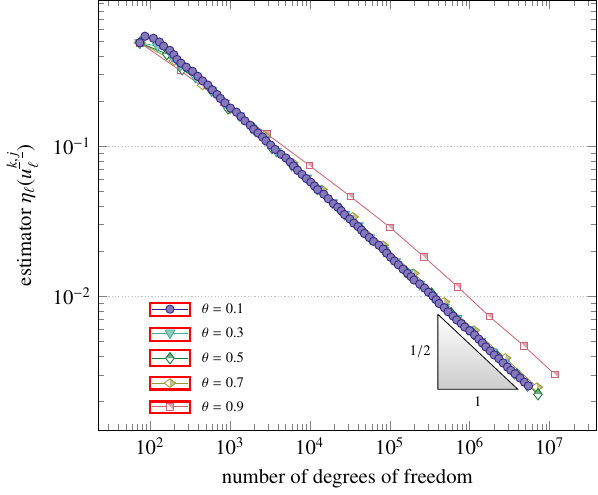}
	\hfil
	\includegraphics[width = 0.45\textwidth]{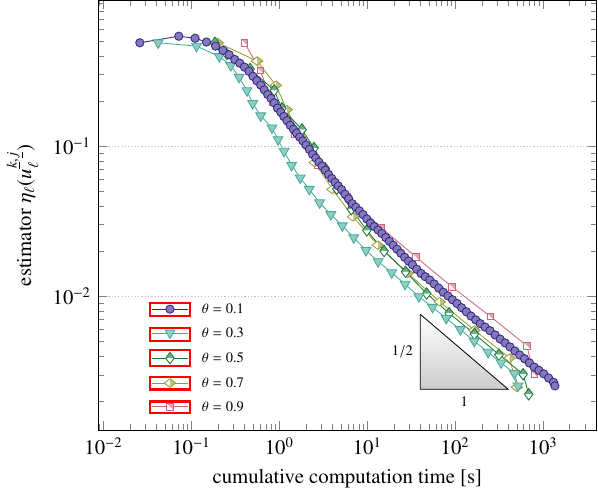}
	\caption{Convergence history plot of the error estimator
		\(\eta_{\ell}(u_{\ell}^{\underline{k}, \underline{j}})\) with respect to
		the number of degrees of freedom (left) and the cumulative computation time
		(right) for the nonlinear experiment~\eqref{eq:experiment:nonlinear} with
		polynomial degree \(p=1\) and
		optimal damping parameter \(\delta = 1/L\) for
		several marking parameters \(\theta \in \{0.1, 0.3, 0.5, 0.7, 0.9\}\)
		and solver parameters \(\lambda_{\textup{lin}} = \lambda_{\textup{alg}} = 0.7\).}
	\label{fig:nonlinear:theta}
\end{figure}

\begin{table}
	\centering
	\includegraphics[width = \textwidth]{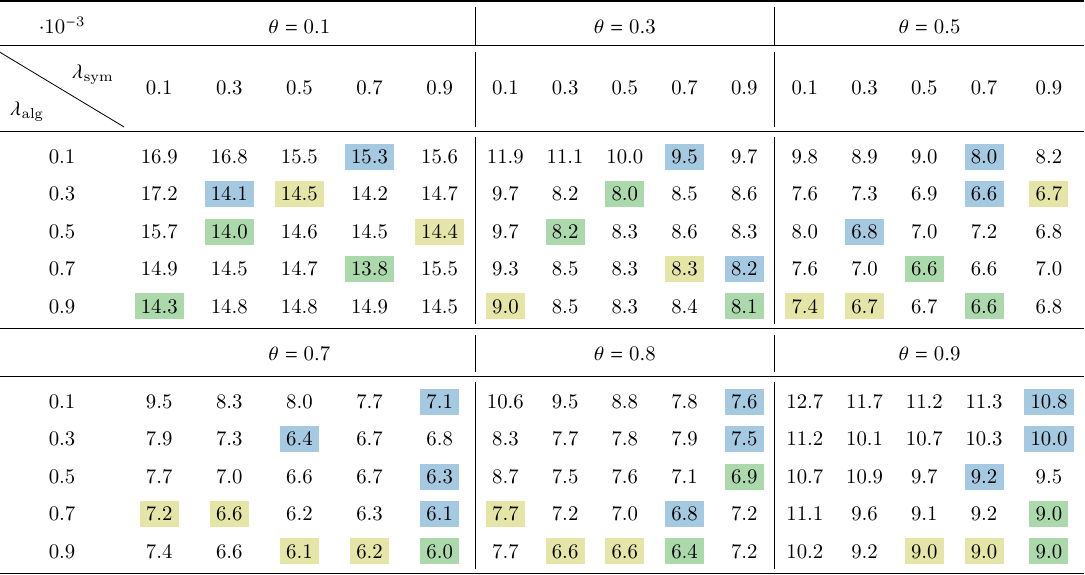}
	\caption{Optimal selection of parameters with
		respect to the computational costs for the nonlinear experiment~\eqref{eq:experiment:nonlinear}
		with $p=1$ and $\delta = 1/L$. For the comparison, we consider
		the
		weighted
		cumulative time $ \big[ \eta_{\ell}(u_\ell^{\underline{k},
					\underline{j}}) \,
				\sum_{|
					\ell', k', j' | \le
					| \ell, \underline{k}, \underline{j} |}
				\mathrm{time}(\ell')
				\big]$ (values in \(10^{-3}\)) with stopping
		criterion $\eta_\ell(u_\ell^{\underline{k}, \underline{j}}) < 5 \cdot
			10^{-2} \, \eta_0(u_0^{0,0})$ for
		various choices of
		$\lambda_{\textup{lin}}$, $\lambda_{\textup{alg}}$, and $\theta$.
		In each $\theta$-block, we mark in yellow the best choice per column, in blue the best choice per row, and in green when both choices coincide. The best choices for $\lambda_{\textup{lin}}$ and $\lambda_{\textup{alg}}$ are observed for $\theta = 0.5$ and $\theta=0.7$.}
	\label{tab:cost:nonlinear}
\end{table}

\section*{Funding}
This research was funded in whole or in part by the Austrian Science Fund (FWF)
[\href{https://www.fwf.ac.at/en/research-radar/10.55776/F65}{10.55776/F65},
		\href{https://www.fwf.ac.at/en/research-radar/10.55776/I6802}{10.55776/I6802}, and
		\href{https://www.fwf.ac.at/en/research-radar/10.55776/P33216}{10.55776/P33216}].
For open access purposes, the author has applied a CC BY public copyright license
to any author accepted manuscript version arising from this submission.

\sloppy
\printbibliography

\appendix
\section{Proofs of Lemma~\ref{lemma:summability:criterion},
  Lemma~\ref{lemma:summability}, and Lemma~\ref{lem:inexact_contraction}}
\label{appendix}

\begin{proof}[\textbf{Proof of Lemma~\ref{lemma:summability:criterion}}]
	The proof is split into four steps.

	\textbf{Step~1.}
	We consider the perturbed contraction of \((a_\ell)_{\ell \in \mathbb{N}_0}\) from~\eqref{eq:summability:criterion}.
	By induction on $n$, we see with the empty sum understood (as usual) as zero that
	\begin{equation*}
		a_{\ell+n} \le q^n a_\ell + \sum_{j=1}^n q^{n-j} b_{\ell+j-1} \quad \text{for all } \ell, n \in \mathbb{N}_0.
	\end{equation*}
	From this and the geometric series, we infer that
	\begin{equation}\label{eq:a_ell:quasi-mon}
		a_{\ell+n}
		\le
		q^n a_\ell + C_1 \Big( \sum_{j=1}^n q^{n-j} \Big) a_\ell
		\le
		\Big(q^n + \frac{C_1}{1-q} \Big) \, a_\ell
		\eqqcolon
		C_3 \, a_\ell
		\quad \text{for all } \ell, n \in \mathbb{N}_0.
	\end{equation}

	\textbf{Step~2.}
	Next, we note that the perturbed contraction of \((a_\ell)_{\ell \in \mathbb{N}_0}\) from
	\eqref{eq:summability:criterion} and the
	Young inequality with sufficiently small \(\varepsilon > 0\) ensure
	\begin{equation*}
		0 < \kappa
		\coloneqq
		(1+\varepsilon) \, q^2 < 1
		\quad \text{and} \quad
		a_{\ell+1}^2
		\stackrel{\eqref{eq:summability:criterion}}\le
		\kappa \, a_{\ell}^2
		+ (1+\varepsilon^{-1}) \, b_\ell^2 \quad \text{for all } \ell \in \mathbb{N}_0.
	\end{equation*}
	This and the summability of
	\((b_\ell)_{\ell \in \mathbb{N}_0}\) from \eqref{eq:summability:criterion} guarantee
	\begin{equation*}
		\sum_{\ell' = \ell+1}^{\ell+N} a_{\ell'}^2
		=
		\sum_{\ell' = \ell}^{\ell+N-1} a_{\ell'+1}^2
		\stackrel{\mathclap{\eqref{eq:summability:criterion}}}\le
		\kappa \sum_{\ell' = \ell}^{\ell+N-1} a_{\ell'}^2 + (1+\varepsilon^{-1}) C_2 \, N^{1-\delta} \,
		a_\ell^2.
	\end{equation*}
	Rearranging the estimate, we arrive at
	\begin{equation}\label{eq1:feischl}
		\sum_{\ell' = \ell}^{\ell+N} a_{\ell'}^2
		\le
		\Bigl[ 1 + \frac{\kappa + (1+\varepsilon^{-1}) C_2 \, N^{1-\delta}}{1-\kappa} \Bigr]
		\, a_\ell^2 \eqqcolon D_N
		\, a_\ell^2
		\quad \text{for all } \ell, N \in \mathbb{N}_0,
	\end{equation}
	where we note that $1 \le D_N \simeq N^{1-\delta}$ as $N \to \infty$. In the following, we prove that this already guarantees that~\eqref{eq1:feischl} holds with an $N$-independent constant (instead of the constant $D_N$ growing with $N$); see also Lemma~\ref{lemma:summability} below.

	\textbf{Step~3.}
	We show by mathematical induction on $n$ that~\eqref{eq1:feischl} implies
	\begin{equation}\label{eq2:feischl}
		a_{\ell+n}^2
		\le
		\biggl(\prod_{j=1}^n (1-D_j^{-1})\biggr) \, \sum_{{\ell'}=\ell}^{\ell+n} a_{\ell'}^2
		\quad \text{for all } \ell, n \in \mathbb{N}_0.
	\end{equation}
	Note that~\eqref{eq2:feischl} holds for all $\ell \in \mathbb{N}_0$ and $n = 0$ (with the empty product interpreted as $1$). Hence, we may suppose that~\eqref{eq2:feischl} holds for all $\ell \in \mathbb{N}_0$ and up to $n \in \mathbb{N}_0$. Then,
	\begin{align*}
		 & a_{\ell+(n+1)}^2
		=
		a_{(\ell+1)+n}^2
		\stackrel{\eqref{eq2:feischl}}\le
		\biggl(\prod_{j=1}^n (1-D_j^{-1})\biggr) \, \sum_{{\ell'}=\ell+1}^{(\ell+1)+n} a_{\ell'}^2
		= \biggl(\prod_{j=1}^n (1-D_j^{-1})\biggr) \, \bigg( \sum_{{\ell'}=\ell}^{\ell+(n+1)} a_{\ell'}^2 - a_\ell^2 \bigg)
		\\& \qquad
		\stackrel{\mathclap{\eqref{eq1:feischl}}}\le \
		\biggl(\prod_{j=1}^n (1-D_j^{-1})\biggr) \, \bigg( \sum_{{\ell'}=\ell}^{\ell+(n+1)} a_{\ell'}^2 - D_{n+1}^{-1} \sum_{{\ell'}=\ell}^{\ell+(n+1)} a_{\ell'}^2\bigg)
		= \biggl(\prod_{j=1}^{n+1} (1-D_j^{-1})\biggr) \, \sum_{{\ell'}=\ell}^{\ell+(n+1)} a_{\ell'}^2.
	\end{align*}
	This concludes the proof of~\eqref{eq2:feischl}.

	\textbf{Step~4.}
	From~\eqref{eq1:feischl}--\eqref{eq2:feischl}, we infer that
	\begin{equation}\label{eq3:feischl}
		a_{\ell+n}^2
		\le \biggl(\prod_{j=1}^{n} (1-D_j^{-1})\biggr) D_n \, a_\ell^2
		\quad \text{for all } \ell, n \in \mathbb{N}.
	\end{equation}
	Note that
	\begin{equation*}
		M_n \coloneqq \log \bigg[ \biggl(\prod_{j=1}^n (1-D_j^{-1})\biggr) D_n \biggr]
		= \sum_{j=1}^n \log(1-D_j^{-1}) + \log D_n.
	\end{equation*}
	With
	$1 - x \le \exp(-x)$ for all $0 < x < 1$, it follows for $x = D_j^{-1}$ that
	\begin{equation*}
		M_n \le \log D_n - \sum_{j=1}^n D_j^{-1}
		\simeq (1-\delta) \, \log n - \sum_{j=1}^n \frac{1}{j^{1-\delta}}
		\xrightarrow{n \to \infty} -\infty,
	\end{equation*}
	since $\log n \le \sum_{j=1}^n (1/j)$.
	Fix $n_0 \in \mathbb{N}$ such that $M_{n_0} < 0$. It follows from~\eqref{eq3:feischl} that
	\begin{equation}\label{eq4:feischl}
		a_{\ell+in_0}^2
		\le q_0^i \, a_\ell^2
		\quad \text{for all } \ell, i \in \mathbb{N}_0,
		\quad \text{where } 0 < q_0 \coloneqq \exp(M_{n_0}) < 1.
	\end{equation}
	Let $\ell \in \mathbb{N}_0$. For general $n \in \mathbb{N}_0$, choose $i,j \in \mathbb{N}$ with $j < n_0$ such that $n = i n_0 + j$. With~\eqref{eq4:feischl} and quasi-monotonicity~\eqref{eq:a_ell:quasi-mon} of $a_\ell$, we derive
	\begin{equation*}
		a_{\ell+n}^2
		=
		a_{(\ell + j) + in_0}^2
		\stackrel{\eqref{eq4:feischl}}\le
		q_0^i \, a_{\ell + j}^2
		\stackrel{\eqref{eq:a_ell:quasi-mon}}\le
		C_3^2 \, q_0^i \, a_\ell^2
		=
		C_3^2 \, q_0^{-j/n_0} q_0^{n/n_0} a_\ell^2
		\le
		(C_3^2/q_0) \, (q_0^{1/n_0})^n a_\ell^2.
	\end{equation*}
	This completes the proof of~\eqref{eq:Rlinear:convergence} with $C_{\textup{lin}} \coloneqq C_3^2/q_0 > 0$ and $0 < q_{\textup{lin}} \coloneqq q_0^{1/n_0} < 1$.
\end{proof}

\begin{proof}[\textbf{Proof of Lemma~\ref{lemma:summability}}]
	First, observe that $(a_\ell)_{\ell \in \mathbb{N}_0}$ is R-linearly convergent in
	the sense of~(ii) if and only if  $(a_\ell^m)_{\ell \in \mathbb{N}_0}$ is
	R-linearly convergent in the sense of~(ii) with $C_{\textup{lin}}$ replaced by
	$C_{\textup{lin}}^m$ and $q_{\textup{lin}}$ replaced by $q_{\textup{lin}}^m$. Therefore, we
	may restrict to $m = 1$.

	The implication $\textup{(ii)} \Longrightarrow \textup{(i)}$ follows from the geometric
	series, i.e.,
	\begin{equation*}
		\sum_{\ell' = \ell+1}^\infty a_{\ell'}
		\stackrel{\textup{(ii)}}\le
		C a_\ell \sum_{\ell' = \ell+1}^\infty q^{\ell'-\ell} \leq \frac{Cq}{1-q}
		\, a_\ell
		\quad \text{for all } \ell \in \mathbb{N}_0.
	\end{equation*}
	Conversely, (i) yields that
	\begin{equation*}
		(C_1^{-1} + 1) \sum_{\ell' = \ell+1}^\infty a_{\ell'}
		\stackrel{\textup{(i)}}\le
		a_\ell + \sum_{\ell' = \ell+1}^\infty a_{\ell'}
		=
		\sum_{\ell' = \ell}^\infty a_{\ell'}
		\quad \text{for all } \ell \in \mathbb{N}_0.
	\end{equation*}
	Inductively, this leads to
	\begin{equation*}
		a_{\ell+n}
		\le
		\sum_{\ell' = \ell+n}^\infty a_{\ell'}
		\stackrel{\textup{(i)}}\le
		\frac{1}{(C_1^{-1}+1)^n} \, \sum_{\ell' = \ell}^\infty a_{\ell'}
		\stackrel{\textup{(i)}}\le
		\frac{1 + C_1}{ (C_1^{-1}+1)^n} \, a_\ell
		\quad \text{for all } \ell, n \in \mathbb{N}_0.
	\end{equation*}
	This proves~(ii) with $C_{\textup{lin}} \coloneqq 1 + C_1$ and $q_{\textup{lin}} \coloneqq
		(C_1^{-1}+1)^{-1}$.
\end{proof}

\begin{proof}[\textbf{Proof of Lemma~\ref{lem:inexact_contraction}}]
	Let $(\ell,k,\underline{j}) \in \mathcal{Q}$ with $k \ge 1$.
	Contraction of the Zarantonello iteration~\eqref{eq:double:zarantonello}
	proves
	\begin{equation*}
		|\mkern-1.5mu|\mkern-1.5mu| u_\ell^\star - u_\ell^{k,\underline{j}} |\mkern-1.5mu|\mkern-1.5mu|
		\le
		|\mkern-1.5mu|\mkern-1.5mu| u_\ell^\star - u_\ell^{k,\star} |\mkern-1.5mu|\mkern-1.5mu|
		+ |\mkern-1.5mu|\mkern-1.5mu| u_\ell^{k,\star} - u_\ell^{k,\underline{j}} |\mkern-1.5mu|\mkern-1.5mu|
		\stackrel{\eqref{eq:double:zarantonello}}\le
		q_{\textup{sym}}^\star \, |\mkern-1.5mu|\mkern-1.5mu| u_\ell^\star - u_\ell^{k-1,\underline{j}} |\mkern-1.5mu|\mkern-1.5mu|
		+ |\mkern-1.5mu|\mkern-1.5mu| u_\ell^{k,\star} - u_\ell^{k,\underline{j}} |\mkern-1.5mu|\mkern-1.5mu|.
	\end{equation*}
	From the termination criterion of the algebraic
	solver~\eqref{eq:double:termination:alg}, we see that
	\begin{equation*}
		|\mkern-1.5mu|\mkern-1.5mu| u_\ell^{k,\star} - u_\ell^{k,\underline{j}} |\mkern-1.5mu|\mkern-1.5mu|
		\le
		\frac{q_{\textup{alg}}}{1-q_{\textup{alg}}} \, |\mkern-1.5mu|\mkern-1.5mu| u_\ell^{k,\underline{j}} - u_\ell^{k,\underline{j}-1} |\mkern-1.5mu|\mkern-1.5mu|
		\stackrel{\eqref{eq:double:termination:alg}}\le
		\frac{q_{\textup{alg}}}{1-q_{\textup{alg}}} \, \lambda_{\textup{alg}} \big[ \lambda_{\textup{sym}}
			\eta_\ell(u_\ell^{k,\underline{j}}) + |\mkern-1.5mu|\mkern-1.5mu| u_\ell^{k,\underline{j}} - u_\ell^{k-1,\underline{j}} |\mkern-1.5mu|\mkern-1.5mu|
			\big].
	\end{equation*}
	With the termination criterion of the inexact Zarantonello
	iteration~\eqref{eq:double:termination:sym}, it follows that
	\begin{equation*}
		|\mkern-1.5mu|\mkern-1.5mu| u_\ell^{k,\star} - u_\ell^{k,\underline{j}} |\mkern-1.5mu|\mkern-1.5mu|
		\stackrel{\eqref{eq:double:termination:sym}}\le
		\frac{2 \, q_{\textup{alg}}}{1-q_{\textup{alg}}} \, \lambda_{\textup{alg}}
		\begin{cases}
			\lambda_{\textup{sym}} \eta_\ell(u_\ell^{k,\underline{j}})                                                    & \text{for } k = \underline{k}[\ell], \\
			|\mkern-1.5mu|\mkern-1.5mu| u_\ell^{k,\underline{j}} - u_\ell^{k-1,\underline{j}} |\mkern-1.5mu|\mkern-1.5mu| & \text{for } 1 \le k <
			\underline{k}[\ell].
		\end{cases}
	\end{equation*}
	For $k = \underline{k}[\ell]$, the preceding estimates
	prove~\eqref{eq2:zarantonello:inexact}.
	For $k < \underline{k}[\ell]$, it follows that
	\begin{equation*}
		|\mkern-1.5mu|\mkern-1.5mu| u_\ell^\star - u_\ell^{k,\underline{j}} |\mkern-1.5mu|\mkern-1.5mu|
		\le
		q_{\textup{sym}}^\star \, |\mkern-1.5mu|\mkern-1.5mu| u_\ell^\star - u_\ell^{k-1,\underline{j}} |\mkern-1.5mu|\mkern-1.5mu|
		+ \frac{2 \, q_{\textup{alg}}}{1-q_{\textup{alg}}} \, \lambda_{\textup{alg}} \,
		\big[ |\mkern-1.5mu|\mkern-1.5mu| u_\ell^\star - u_\ell^{k,\underline{j}} |\mkern-1.5mu|\mkern-1.5mu| + |\mkern-1.5mu|\mkern-1.5mu| u_\ell^\star -
			u_\ell^{k-1,\underline{j}} |\mkern-1.5mu|\mkern-1.5mu| \big].
	\end{equation*}
	Provided that $\frac{2 \, q_{\textup{alg}}}{1-q_{\textup{alg}}} \, \lambda_{\textup{alg}} < 1$, this proves
	\begin{equation*}
		|\mkern-1.5mu|\mkern-1.5mu| u_\ell^\star - u_\ell^{k,\underline{j}} |\mkern-1.5mu|\mkern-1.5mu|
		\le
		\frac{q_{\textup{sym}}^\star + \frac{2 \, q_{\textup{alg}}}{1-q_{\textup{alg}}} \, \lambda_{\textup{alg}}}{1 - \frac{2 \,
		q_{\textup{alg}}}{1-q_{\textup{alg}}} \, \lambda_{\textup{alg}}} \, |\mkern-1.5mu|\mkern-1.5mu| u_\ell^\star -
		u_\ell^{k-1,\underline{j}} |\mkern-1.5mu|\mkern-1.5mu|
		\stackrel{\eqref{eq:double:assumption:lambda}}=
		q_{\textup{sym}} \, |\mkern-1.5mu|\mkern-1.5mu| u_\ell^\star - u_\ell^{k-1,\underline{j}} |\mkern-1.5mu|\mkern-1.5mu|,
	\end{equation*}
	which is \eqref{eq1:zarantonello:inexact}.
	This concludes the proof.
\end{proof}

\end{document}